% ----------------------------------------------------------------
% AMS-LaTeX Paper ************************************************
% **** -----------------------------------------------------------
\documentclass[12pt]{amsart}
\usepackage{graphicx}%\usepackage{graphics}
%\usepackage{natbib}
% \NatBibNumeric
% \def\bibfont{\small}%
% \def\bibsep{\smallskipamount}%
% \def\bibhang{24pt}%
% \def\BIBand{and}%
% \def\newblock{\ }%
% \bibpunct[, ]{[}{]}{,}{n}{}{,}%
% ----------------------------------------------------------------
\vfuzz2pt % Don't report over-full v-boxes if over-edge is small
\hfuzz2pt % Don't report over-full h-boxes if over-edge is small
\oddsidemargin 0.12in    % originally 0.22
\evensidemargin 0.12in   % originally 0.22
\topmargin=-.2in \textwidth=6.2in \textheight=9.2in
% THEOREMS -------------------------------------------------------
\newtheorem{thm}{Theorem}[section]
\newtheorem{assumption}[thm]{Assumption}
\newtheorem{cor}[thm]{Corollary}
\newtheorem{lem}[thm]{Lemma}
\newtheorem{prop}[thm]{Proposition}
\theoremstyle{definition}

\theoremstyle{remark}
\newtheorem{rem}[thm]{Remark}
\numberwithin{equation}{section}

% MATH -----------------------------------------------------------

% ----------------------------------------------------------------

\def\limn{\underset{n\rightarrow\infty}{\lim}} \def\limy{\underset{y\rightarrow\infty}{\lim}}
\def\limu{\underset{u\rightarrow\infty}{\lim}}
\def\limuz{\underset{u\rightarrow 0+}{\lim}}
\def\eqd{\overset{{\rm d}}{=}}

\def\vi{u_i}

\def\vin{v_i^n}

\def\sigmaf{\sigma_f}

\def\F{\mathcal{F}}

\def\mun{\mu^n}

\def\muln{\mu_l^n}   \def\mukn{\mu_k^n}

\def\muln{\mu_l^n}

\def\muthn{\mu_\theta^n}

\def\muzn{\mu_0^n}

\def\pt{\tilde{p}}

\def\tautn{\tilde{\tau}^n}
\def\taut{\tilde{\tau}}
\def\tautnD{\tilde{\tau}^n_D}
\def\tautD{\tilde{\tau}_D}

\def\hn{\hat{\mu}_l^n}
\def\cin{c^n_{i,l}}

\def\inN{\in\mathbb{N}}

\def\q{\boldsymbol{\pi}} \def\p{\textbf{p}}

\def\vphi{\boldsymbol{\varphi}}
\def\vphis{\boldsymbol{\varphi_\sigma}}
\def\vphiasf{\boldsymbol{\varphi_{\frac{\sqrt{\alpha}}{\sigmaf}}}} \def\vphihat{\boldsymbol{\hat{\varphi}}}
\def\vphit{\boldsymbol{\tilde{\varphi}}} \def\vphitn{\boldsymbol{\tilde{\varphi}^n}}

\def\pis{\boldsymbol{\pi_\sigma}}
\def\piasf{\boldsymbol{\pi_{\frac{\sqrt{\alpha}}{\sigmaf}}}}
\def\piasv{\boldsymbol{\pi_{\sqrt{\alpha}\sigma_v}}}

\def\p{\boldsymbol{\pi}} \def\pn{\boldsymbol{\pi^n}}
\def\zetatn{\boldsymbol{\tilde{\zeta}^n}}

\def\xitn{\boldsymbol{\tilde{\xi}^n}}
\def\chitn{\boldsymbol{\tilde{\chi}^n}}

\def\ptn{\boldsymbol{\tilde{\pi}^n}}
\def\pn{\boldsymbol{\pi^n}}
\def\pt{\boldsymbol{\tilde{\pi}}}
\def\phat{\boldsymbol{\hat{\pi}}}

\def\IS{I_S}

\renewenvironment{proof}[1][Proof]{\textbf{#1.} }{\ \rule{0.5em}{0.5em}}

%%%%%%%%%%%%%%%%
\begin{document}
%%%%%%%%%%%%%%%%

% Outcomment only when entries are known. Otherwise leave as is and
%   default values will be used.
%\setcounter{page}{1}
%\VOLUME{00}%
%\NO{0}%
%\MONTH{Xxxxx}% (month or a similar seasonal id)
%\YEAR{0000}% e.g., 2005
%\FIRSTPAGE{000}%
%\LASTPAGE{000}%
%\SHORTYEAR{00}% shortened year (two-digit)
%\ISSUE{0000} %
%\LONGFIRSTPAGE{0001} %
%\DOI{10.1287/xxxx.0000.0000}%

% Author's names for the running heads
% Sample depending on the number of authors;
% \RUNAUTHOR{Asaf Cohen}
% \RUNAUTHOR{Jones and Wilson}
% \RUNAUTHOR{Jones, Miller, and Wilson}
% \RUNAUTHOR{Jones et al.} % for four or more authors
% Enter authors following the given pattern:
%\RUNAUTHOR{}

% Title or shortened title suitable for running heads. Sample:
% \RUNTITLE{Parameter Estimation: The Proper Way to Use Bayesian Posterior Processes with Brownian Noise}
% Enter the (shortened) title:
%\RUNTITLE{}

% Full title. Sample:
% \TITLE{Bundling Information Goods of Decreasing Value}
% Enter the full title:
\title[The Proper Way to Use Bayesian Posterior Processes with Brownian Noise]{Parameter Estimation: The Proper Way to Use Bayesian Posterior Processes with Brownian Noise}

% Block of authors and their affiliations starts here:
% NOTE: Authors with same affiliation, if the order of authors allows,
%   should be entered in ONE field, separated by a comma.
%   \EMAIL field can be repeated if more than one author
%\ARTICLEAUTHORS{%
\author{Asaf Cohen} \thanks{Department of Electrical Engineering,
Technion--Israel Institute of Technology,
Haifa 32000, Israel,
shloshim@gmail.com, web https://sites.google.com/site/asafcohentau/}

% Enter all authors
 % end of the block

\begin{abstract}
This paper studies a problem of Bayesian parameter estimation for a sequence of
scaled counting processes whose weak limit is a Brownian motion with an unknown drift. The main result of the paper is that the limit of the
posterior distribution processes is, in general, not equal to the posterior distribution process of the mentioned Brownian motion with the unknown drift.
Instead, it is equal to the posterior distribution process associated with a Brownian motion with  the same unknown drift and a different standard deviation coefficient. The difference between the two standard deviation coefficients can be arbitrarily large.
The characterization of the limit of the posterior distribution processes is then applied to a family of stopping time problems. We show that the proper way to find asymptotically
optimal solutions to stopping time problems w.r.t.~the scaled counting processes is by looking at the limit of the posterior distribution processes rather than by the naive approach of looking at the limit of the scaled counting processes themselves. The difference between the performances can be arbitrarily large.
\end{abstract}

% Sample
%\KEYWORDS{deterministic inventory theory; infinite linear programming duality;
%  existence of optimal policies; semi-Markov decision process; cyclic schedule}
%\MSCCLASS{Primary: 62L12; secondary: 60J65}
%\ORMSCLASS{Primary:
%Sequential estimation; secondary: Brownian motion}
%\HISTORY{Received November 20, 2003; revised March 8, 2004, and March 26, 2004.}

% Fill in data. If unknown, outcomment the field
\keywords{Bayesian sequential testing, parameter estimation,
posterior process, Brownian motion, diffusion approximation, optimal
stopping}
%111\MSCCLASS{}
%111\ORMSCLASS{Primary: ; secondary: } %\HISTORY{}

\maketitle
%%%%%%%%%%%%%%%%%%%%%%%%%%%%%%%%%%%%%%%%%%%%%%%%%%%%%%%%%%%%%%%%%%%%%%

\section{Introduction}\label{sec:introduction}
Brownian\footnote{This paper is an extended version of the paper with the same title that appears on \emph{Mathematics of Operations Research}. The only difference is that in this version we allow the case that thw system was activated before time $t=0$.} motion is a fundamental process in modeling various
stochastic phenomena. It has practical applications in various fields, such as mathematical finance, physics, queueing networks,
and signal processing. Brownian
motion is the continuous-time analogue of random walks and it can be obtained as the weak limit of discrete processes.%$\\ $
%$\\ $

%Let $BM_t(\mu,\sigma)$ be a Brownian motion with a linear drift $\mu t$ and a constant variance $\sigma$.
%$\\ $
%$\\ $
%
%
In this paper we study the relation between a Brownian motion with an unknown drift and a sequence of scaled counting processes in continuous time, which we term as `discrete processes'.
We assume that there exists a random variable $\theta$
with a known prior distribution, and a sequence of discrete processes
$\{(\tilde{L}^n_\theta(t))\}_{n\inN}$ that converges in distribution to
$\tilde{L}(t)=\tilde{L}_\theta(t):=\theta t+\sigma W(t)$, where $(W(t))$ is a standard
Brownian motion independent of the drift $\theta$. The decision maker (DM) does
not observe the random variable $\theta$, but rather observes continuously
$\tilde{L}^n:=\tilde{L}^n_\theta$.
Therefore, for sufficiently large $n\inN$, the observed process is
approximately distributed as a Brownian motion with an unknown
drift. For every $n$, define $\ptn$ (resp.
${\boldsymbol{\tilde{\pi}}}$) to be the (Bayesian) posterior
distribution process of $\theta$ given the observations from $\tilde{L}^n$ (resp.
$\tilde{L}$).

In many optimal control/stopping time problems such as the Bayesian
sequential testing problem in its different versions %(see e.g.,
%Shiryaev (1978) \cite{Shiryaev1978}, Gapeev and Peskir (2004)
%\cite{Gapeev2004}, Gapeev and Shiryaev (2011) \cite{Gapeev2011}, and Buonaguidi and Muliere (2013) \cite{Buonaguidi2013})
and the Bayesian Brownian bandit problem %(see e.g., Berry and Friestedt (1985)
%\cite{Berry1985}, Bolton and Harris (1999) \cite{Bolton1999}, Cohen and Solan (2013) \cite{Cohen2013})
(see the literature review below) it is possible to formulate both the
problem and the solution by using the posterior distribution process. Because in these models the posterior distribution process is of interest, the naive approach of using results taken from optimal stopping problems w.r.t.~the posterior distribution process
$\boldsymbol{\tilde{\pi}}$, such as the structure of the optimal strategy, and implementing them in optimal stopping problems concerning the
process $\tilde{L}^n$ (for sufficiently large $n$) is not relevant;
the right approach should be to find the limit of the posterior
distribution processes $\ptn$ instead of the posterior distribution
process of the limit process $\tilde{L}=\limn \tilde{L}^n$. To
illustrate this point, in Remark \ref{rem:brown_present} below we show that $\tilde{L}(t)$, the
value of the process $(\tilde{L}(s))_{0\leq s\leq t}$ at time $t$, is
a sufficient statistic for the posterior distribution process
$\boldsymbol{\tilde{\pi}}$ at time $t$. That is,
$\boldsymbol{\tilde{\pi}}$ is independent of past observations from $\tilde{L}$, given the
present value of $\tilde{L}$.
%
%
%
%It seems that if the posterior belief is of interest then the naive
%method of using the posterior process of $\hat{L}$, that is,
%${\boldsymbol{\hat{\pi}}}$, is not relevant and one should check the
%limit of the posterior processes $\limn\ptn$. Since the process
%$\hat{L}$ is stationary it follows that $\hat{L}(s)$, the value of
%the process $(\hat{L}(t))_{0\leq t\leq s}$ at time $s$ is a
%sufficient statistic for the posterior belief process
%${\boldsymbol{\hat{\pi}}}$ at time $s$. That is,
%${\boldsymbol{\hat{\pi}}}$ is independent of the past, given the
%present.
%
%
%
%
However, it appears that, usually, $\ptn$ depends not only on the present value of $\tilde{L}^n$, but
also on past observations from $\tilde{L}^n$. Therefore, it uses `more information' than $\pt$ does and it is `more accurate'. We show below that this is indeed the case.
\subsection{Main Results}
The paper's main results are: (1) characterizing the limit of the posterior processes, $\limn\ptn$, and (2) using this characterization in order to find asymptotically optimal solutions for Bayesian stopping time problems.
It might happen that $\limn\ptn$ is trivial. This case arises, e.g.,
when the value of $\theta$ is detected in an infinitesimal time
interval or when the limit is a constant. Under mild
assumptions, we find an explicit expression for the limit of the
posterior distribution processes, $\limn\ptn$, and show that in general $\limn\ptn\neq \pt$.
Although, the limit $\limn\ptn$ has a different distribution than the posterior distribution process $\pt$, we prove
that this limit can
be expressed as the posterior distribution process of a different Brownian motion with an unknown drift that is given by
\begin{align}\label{eq:hat_L2intro}
\hat{M}(t)=\hat{M}_\theta (t) :=\theta t + \sigma' W'(t), \;\;t\in[0,\infty),
\end{align}
where $(W'(t))$ is a Brownian motion independent of $\theta$ and
$0<\sigma'\leq \sigma$. The quantity $\sigma'$ depends on the
structure of the processes $\{\tilde{L}^n\}_{n\inN}$. Since
$\sigma'\leq \sigma$, the paths of the process $(\hat{M}(t))$ will be more
concentrated around the path of the linear drift $(\theta t)$ than the paths of the
process $(\tilde{L}(t))$. In other words, $(\hat{M}(t))$ is less noisy than
$(\tilde{L}(t))$. Therefore, it is easier to estimate the parameter $\theta$ given $(\hat{M}(t))$ than given $(\tilde{L}(t))$; that is,
$\limn\ptn$ is more informative than $\boldsymbol{\tilde{\pi}}$.
In addition, we identify when the equality $\sigma' = \sigma$ holds.
We show that it happens if and only if the processes
$\{(\tilde{L}^n_l(t))\}_{l\in S,\; n\inN}$ satisfy a memorylessness
property and no information, regarding the posterior distribution processes, is lost by looking at the present values of the $\{(\tilde{L}^n_l(t))\}_{l\in S,\; n\inN}$ rather than at their past and present values (e.g., Poisson processes with unknown rates that depend on $\theta$ and $n$). This is the same property that holds in the Brownian motion with an unknown drift model. We also show that the difference between the parameters $\sigma'$ and $\sigma$ can be arbitrarily large.
%
%
%
%
%
%
%
%
%
%In this paper, under mild assumptions, we show that this limit is a
%diffusion process that usually differ from the process
%${\boldsymbol{\hat{\pi}}}$ and more informative. This shows that it
%is inefficient to estimate the parameter $\theta$ using the process
%${\boldsymbol{\hat{\pi}}}$. Surprisingly, we show that the
%\emph{structure} of the limit $\limn\ptn$ is identical to that of
%${\boldsymbol{\hat{\pi}}}$;  ; the only difference between the two
%processes is that they have different parameters. That is,
%$\limn\ptn$ can be presented as a posterior distribution process of
%a Brownian motion with an unknown drift and a different standard
%deviation, which depends on the structure of the sequence of the
%processes.

Our study thus strengthens the motivation for analyzing the posterior
distribution process of a Brownian motion with an unknown drift. Moreover, the fact
that the structure of $\limn\ptn$ is the same as that of
${\boldsymbol{\tilde{\pi}}}$ is interesting and raises further
questions about the structures of posterior processes of more general
diffusion processes that involve uncertainty.

We finally show how to find
asymptotically optimal solutions for the Bayesian stopping time problems
for $\tilde{L}^n$ by using the approximation $\limn\ptn$ rather than
${\boldsymbol{\tilde{\pi}}}$.
In fact, since the difference between $\sigma'$ and $\sigma$ can be arbitrarily large, by using the
incorrect approximation ${\boldsymbol{\tilde{\pi}}}$ in order to calculate the optimal strategy in the $n$-th model, the performance can be arbitrarily bad.

The rest of the paper is organized as follows: The introduction is concluded with a literature
review. In Section \ref{sec:Technical} we introduce some technical preliminaries.
In Section \ref{sec:auxiliary} we
present a model of a Brownian motion with an unknown drift. We give
a closed-form formula for the posterior distribution process. In Section \ref{sec:models} we
define a sequence of systems (indexed by $n\inN$) that converges to
a Brownian motion with an unknown drift. In Section \ref{sec:Limit_Processes} we present the
main results and find the distribution of the limit of the sequence
of the posterior distribution processes. In Section \ref{sec:optimal_stopping_time} we consider a general
optimal stopping problem for the $n$-th system and find asymptotically optimal solution by using
the presentation we give to the limit of the posterior distribution processes. Summary and directions for future research appear in Section \ref{sec:conclusion}. The Appendix contains the proofs of several
theorems.

\subsection{Literature Review}
The model of a DM who observes a Brownian motion
with an unknown drift (and known standard deviation) is well
explored in the literature and appears in the context of filtering
theory, optimal stopping problems, and economics.
%$\\ $
%$\\ $

A variation of this model was studied in filtering theory by Kalman
and Bucy (1961) \cite{Kalman1961} and Zakai (1969) \cite{Zakai1969}.
These authors analyzed a more general model, where a DM observes a function of
a diffusion process with an additional noise, which is formulated as
a Brownian motion. They provided equations that the posterior or the
unnormalized posterior distribution process satisfies.
%$\\ $
%$\\ $

Shiryaev (1978) \cite{Shiryaev1978} defined a Bayesian sequential
testing problem where a DM observes continuously a Brownian motion
with an unknown drift and has two hypotheses about the drift
together with a prior probability about these hypotheses. In this
problem the goal of the DM is to test sequentially the hypotheses
with a minimal loss. The choice that the DM should make is to choose a
stopping time and at that time to guess which one of the two hypotheses holds.
This problem was generalized in several ways. Zhitlukhin
and Shiryaev (2011) \cite{Zhitlukhin2011} generalized it to three
hypotheses. Gapeev and Peskir (2004) \cite{Gapeev2004} explored the
problem with finite horizon. Gapeev and Shiryaev (2011)
\cite{Gapeev2011} explored a sequential testing problem where the
observed process is a diffusion process satisfying a stochastic
differential equation. Buonaguidi and Muliere (2013) \cite{Buonaguidi2013} studied a sequential testing problem where the
observed process is a L\'{e}vy process with unknown parameters.
%$\\ $
%$\\ $

Berry and Friestedt (1985) \cite{Berry1985} investigated a Bayesian Brownian bandit
problem where a DM operates a two-armed bandit with two available
arms; a safe arm that yields a constant payoff, and a risky arm that
yields a stochastic payoff, which is a Brownian motion with an
unknown drift. There are two hypotheses about the drift together
with a prior probability about these hypotheses. The DM has to
decide when to switch from the risky arm to the safe arm. Bolton and
Harris (1999) \cite{Bolton1999} investigated a game involving this
type of bandit. Cohen and Solan (2013) \cite{Cohen2013} studied the
single DM problem in the case where the observed process is a
L\'{e}vy process with unknown parameters.
%$\\ $
%$\\ $

Other statistical Bayesian tests involving hypotheses on a Brownian
motion with an unknown drift can be found in the literature. For
example, Polson and Roberts (1994) \cite{Polson1994} investigated
the likelihood function for a diffusion process with an unknown
parameter and provided an example of a Brownian motion with an
unknown drift with a normal prior on the drift.
%$\\ $
%$\\ $

In economic theory, the model of a Brownian motion with two
prior hypotheses about the drift was studied, e.g., by Felli and
Harris (1996) \cite{Felli1996}, Bergemann and Valimaki (1997)
\cite{Bergemann1997}, Bolton and Harris (1999) \cite{Bolton1999},
Keller and Rady (1999) \cite{Keller1999}, and Moscarini (2005)
\cite{Moscarini2005}. In Jovanovic (1979) \cite{Jovanovic1979} the
prior about the drift is assumed to have the normal distribution. In
the listed papers it is assumed that random changes appear after
every small time interval and the process of total change can be
modeled approximately by a Brownian motion.

Another well-known example
of the use of Brownian motion as a continuous-time approximation of
a discrete-time processes is in queueuing theory; under heavy
traffic, the queue size, which changes by discrete jumps after every
random time interval, converges to a reflected Brownian motion with
a drift. The uncertainty about the drift can model a situation of a
$G/G/1$ queue in heavy traffic where the rate of service is unknown.
Such a case arises, for example, when the number of projects that a
server works on and the amount of the effort that it dedicates to
each project are unknown. For further examples of queueing models
with parameter uncertainty see Whitt (2006) \cite{Whitt2006} and
the references therein.
\section{Technical Preliminaries}\label{sec:Technical}
Let $T>0$ and let $\mathcal{D}_T:=\mathcal{D}[0,T]$ (resp.
$\mathcal{D}_\infty:=\mathcal{D}[0,\infty)$) be the space of
real-valued RCLL (right-continuous with left limits) functions on
$[0,T]$ (resp. $[0,\infty)$). %Let $\theta$ be a random variable
%with the
%Fix a countable\footnote{All the results in the paper can be extended to a
%subset $S$ with cardinality of the continuum, see Section \ref{sec:continuum}.}
% set $S\subseteq\mathbb{R}$ and a distribution $\pi:=\{\pi_k\}_{k\in S}$
%with the support $S$.
%
%
%Fix a countable set\footnote{We assume
%that $S$ is countable. All the results can be generalized to a
%subset $S$ with cardinality of the the continuum set $S$, see 111.} of real-numbers $S\subseteq\mathbb{R}$
%and a distribution $\pi$ with the support $S$, $\{\pi_k\}_{k\in S}$.

Fix a Borel set $S\subseteq\mathbb{R}$. Let $\mathcal{E}_T$ (resp.
$\mathcal{E}_\infty$) be the space of real-valued functions on
$S\times[0,T]$ (resp. $S\times[0,\infty)$) that are $\mathcal{D}_T$
(resp. $\mathcal{D}_\infty$) with respect to the second variable.
The space $\mathcal{E}_T$ is endowed with the metric\footnote{All
the limiting functions in this paper are in
$\mathcal{C}_\infty:=\mathcal{C}[0,\infty)$ or
$\mathcal{C}_T:=\mathcal{C}[0,T]$ (the subspaces of continuous
functions on $[0,\infty)$ and $[0,T]$, respectively) with respect to
their second variable. Therefore, the uniform topology is sufficient
for our purpose instead of the often used Skorokhod topology (see
Chen and Yao (2001, Ch.~5.1) \cite{Chen2001} for further
discussion).}
\begin{align}\label{e_T}
e_T(\nu,\kappa) &:= \underset{l\in S,
t\in[0,T]}{\sup}\,|\nu(l,t)-\kappa(l,t)| \wedge 1 ,\;\;\text{for
$\nu,\kappa\in\mathcal{E}_T$}.
\end{align}
%\begin{align}\notag
%%
%e_T(x,y) &:=\left[ \underset{t\in[0,T]}{\sup}
%E|x(\theta,t)-y(\theta,t)|\right]\wedge 1 ,\;\;\text{for all $x,y\in\mathcal{E}_T$}.
%\end{align}
By using this metric, we define on the space $\mathcal{E}_\infty$ the
metric
\begin{align}\notag
e_\infty(\nu,\kappa) &:= \sum_{T=1}^\infty
e_T(\nu,\kappa)\frac{1}{2^T} ,\;\;\text{for
$\nu,\kappa\in\mathcal{E}_\infty$}.
\end{align}
The metric $e_\infty$ is a generalization of the standard metric
with which one usually defines convergence to a Brownian motion (see
Karatzas and Shreve (1991) \cite{Karatzas1991}) for functions of two
variables.

\begin{rem}\label{rem:u.o.c}
Let $\{\kappa\}\cup\{\kappa^n\}_{n\inN}\subset \mathcal{E}_\infty$.
From the definitions of $e_T$ and $e_\infty$ it follows that
$\{\kappa^n\}_{n\inN}$ converges to $\kappa$ if and only if for
every $T\in\mathbb{N}$, the restriction of $\{\kappa^n\}_{n\inN}$ to
$S\times[0,T]$ converges to the restriction of $\kappa$ to
$S\times[0,T]$.
\end{rem}

Throughout the paper we denote processes with observations in
$\mathcal{E}_\infty$ by bold Greek letters, processes with
observations in $\mathcal{D}_\infty$ by capital Latin letters, and
functions from $S$ to $\mathbb{R}$ by small Latin letters.

\subsection{Types of Convergence}
Let $\{\boldsymbol{\zeta}\}\cup\{\boldsymbol{\zeta^n}\}_{n\inN}$ be
measurable mappings from a probability space
$(\Omega,\mathcal{F},P)$ to
$(\mathcal{E}_\infty,\mathcal{B}(\mathcal{E}_\infty))$. We define
two types of convergence $\limn \boldsymbol{\zeta^n} =
\boldsymbol{\zeta}$ that are used in this paper.

\subsubsection{\textbf{Uniform Convergence over Compact Sets.}}
We say that $\{\boldsymbol{\zeta}^n\}_{n\inN}$ \emph{converges
uniformly over compact sets} (u.o.c.) to $\boldsymbol{\zeta}$ if
\begin{align}\label{def:e_u.o.c.}
&P\left( \limn
e_\infty(\boldsymbol{\zeta^n},\boldsymbol{\zeta})=0\right)=1.
\end{align}
Remark \ref{rem:u.o.c} implies that
Eq.~(\ref{def:e_u.o.c.}) is equivalent to the requirement that for
every $T\in\mathbb{N}$ one has
\begin{align}\label{def:e_u.o.c.T}
&P\left( \limn
e_T(\boldsymbol{\zeta^n},\boldsymbol{\zeta})=0\right)=1.
\end{align}
\subsubsection{\textbf{Convergence in Distribution.}}
%
%For every $n\inN$, let $P_n$ and $P$ be the distribution of
%$\boldsymbol{\zeta^n}$ and $\boldsymbol{\zeta}$, respectively.
We
say that $\{\boldsymbol{\zeta^n}\}_{n\inN}$ \emph{converges in
distribution} to $\boldsymbol{\zeta}$ (and write $\limn
\boldsymbol{\zeta^n}\eqd \boldsymbol{\zeta}$) if for every bounded
and continuous function $f$ (w.r.t.~the metric $e_\infty$) defined
on $\mathcal{E}_\infty$ one has
\begin{align}\notag%\label{eq:convergence in distribution}
&\limn E[f(\boldsymbol{\zeta^n})] = E[f(\boldsymbol{\zeta})].
\end{align}
As is well known, convergence u.o.c.~implies convergence in
distribution.

If $X:\Omega\rightarrow \mathcal{D}_\infty$ or
$h:S\rightarrow\mathbb{R}$, then one may look at $X(\omega)$ and $h$
as elements in $\mathcal{E}_\infty$ that are independent of the
first and second variables, respectively.

\section{An Auxiliary Model - Brownian Motion with an Unknown Drift}\label{sec:auxiliary}
\subsection{Formulations and Notations}

In this section we study a model of a Brownian motion with an
unknown drift. Let $\theta$ be a random variable with a
countable\footnote{The results in the section can be extended to a
Borel set $S$ with the cardinality of the continuum, see Section
\ref{sec:continuum}.} support $S\subset\mathbb{R}$ and a
distribution $\pi:=\{\pi_l\}_{l\in S}$.
Let $(W(t))$ be a standard Brownian motion independent of $\theta$.
Set $\sigma>0$ and define
$$X(t) = X_\theta(t):=\theta t + \sigma W(t), \;\;t\in[0,\infty).$$
Suppose that the DM observes the process $(X(t))$ continuously, but does not
observe $\theta$. The drift $\theta $ is not known by
the DM. For every $l\in S$ define the hypothesis $H_l:\;\theta=l$.
The parameter $\pi_l$ represents the prior probability that $H_l$ is
true. Denote by $P_l$ the probability measure over the space of
realized paths under the hypothesis $H_l$, and by $P:=P_{\pi} = \sum_{l\in S}\pi_l P_l$ the
probability measure that corresponds to the description above (see  Gapeev and Peskir (2004)
\cite{Gapeev2004} for a rigorous construction of $P$).

\subsection{The Posterior Distribution Process}
At time $t=0$, the parameter
$\theta$ is chosen randomly according to the distribution $\pi$. The DM does not observe
$\theta$ but he knows $\pi$ and and $\sigma$.
At each time instant $t$ the DM observes the process $(X(t))$ and
updates his belief about the hypotheses based on this information in
a Bayesian fashion. We would like to give a closed-form expression
to the \emph{posterior distribution process}\footnote{Note that the
process $(\q(l,t))$ depends on the prior distribution $\pi$; indeed,
for every $l\in S$ one has $\q(l,0)=\pi_l$. To save cumbersome
notation, we omit the dependence on $\pi$.}
\begin{align}\label{eq:pi}
\q(l,t) := P(\theta =l \mid \mathcal{F}_t^X ; \pi), \;\;l\in
S,\;t\in[0,\infty),
\end{align}
where $\mathcal{F}_t^X$ is the sigma-algebra that is generated by
$(X(s))_{0\leq s \leq t}$. The value $\q(l,t)$ is the posterior distribution
at time $t$ that $H_l$ is true given the past observations.

Without loss of generality we assume that $0\in S$, since by taking
$m\in S$ one can look at the process
$$X(t)-mt =(\theta-m) t + \sigma W(t), \;\;t\in[0,\infty).$$
The processes $(X(t)-mt)$ and
$(X(t))$ admit the same filtration; and $0$ is in the support of
$\theta-m$.
%The two processes $(X(t))$ and $(X(t)-mt)$ admit the same filtration.

An important auxiliary process is the Girsanov process, also called
the Radon--Nikod\'{y}m density, which is defined  by
\begin{align}\label{varphi}
\vphi(l,t) :=
\frac{d(P_l\mid\mathcal{F}_t^X)}{d(P_0\mid\mathcal{F}_t^X
)},\;\;l\in S,\;t\in[0,\infty).
\end{align}
The next result connects the process $\q$ to the process $\vphi$.
\begin{lem}
For every $l\in S$, and every $t\in[0,\infty)$,
\begin{align}\label{frac_q_l_q_m}
\q(l,t) = \frac{\pi_l\vphi(l,t)}{\sum_{k\in S}
 \pi_k\vphi(k,t)}.
\end{align}
\end{lem}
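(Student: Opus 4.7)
The plan is to apply the generalized Bayes formula directly, using that $P = \sum_{l\in S}\pi_l P_l$ is a mixture and that $\vphi(l,\cdot)$ is by definition the likelihood ratio of $P_l$ against $P_0$ on $\mathcal{F}_t^X$. First I would rewrite the posterior as follows: for any bounded $\mathcal{F}_t^X$-measurable random variable $Y$,
\begin{align}\notag
E[Y\mathbf{1}_{\{\theta=l\}}] = \pi_l\, E_l[Y] = \pi_l\, E_0[Y\,\vphi(l,t)],
\end{align}
where the first equality uses the mixture structure of $P$ (so $\mathbf{1}_{\{\theta=l\}}$ picks out the $l$-th component with mass $\pi_l$) and the second uses the definition of $\vphi(l,t)$ in Eq.~(\ref{varphi}).

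Next, by the defining property of $\q(l,t) = P(\theta=l\mid\mathcal{F}_t^X;\pi)$ and the fact that $\q(l,t)$ is $\mathcal{F}_t^X$-measurable,
\begin{align}\notag
E[Y\mathbf{1}_{\{\theta=l\}}] = E[Y\,\q(l,t)] = \sum_{k\in S}\pi_k\, E_k[Y\,\q(l,t)] = E_0\!\left[Y\,\q(l,t)\sum_{k\in S}\pi_k\vphi(k,t)\right],
\end{align}
where the last step again uses the definition of each $\vphi(k,t)$ as the Radon--Nikod\'ym density of $P_k$ with respect to $P_0$ on $\mathcal{F}_t^X$, together with monotone convergence to justify the interchange of the (possibly countably infinite) sum and the expectation.

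Step three is to equate the two expressions. Since this identity holds for every bounded $\mathcal{F}_t^X$-measurable $Y$, and since both $\pi_l\vphi(l,t)$ and $\q(l,t)\sum_k\pi_k\vphi(k,t)$ are $\mathcal{F}_t^X$-measurable, I conclude
\begin{align}\notag
\pi_l\vphi(l,t) = \q(l,t)\sum_{k\in S}\pi_k\vphi(k,t), \qquad P_0\text{-a.s.}
\end{align}
Because $0\in S$ and $\pi_0>0$ (by the WLOG reduction in the text) and $\vphi(0,t)\equiv 1$, the denominator is strictly positive $P_0$-a.s., so the quotient is well defined and gives Eq.~(\ref{frac_q_l_q_m}) $P_0$-a.s. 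Finally, $P_0$ and $P$ agree on the null sets of $\mathcal{F}_t^X$ (they are mutually absolutely continuous via $\vphi(\cdot,t)$), so the identity also holds $P$-a.s.

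The main potential obstacle is bookkeeping when $S$ is countably infinite: the interchange of the sum $\sum_{k\in S}\pi_k E_k[\,\cdot\,]$ with the expectation, and the a.s.\ positivity and finiteness of $\sum_k\pi_k\vphi(k,t)$. Both are handled by Tonelli/monotone convergence applied to the nonnegative integrands $Y^\pm\q(l,t)\vphi(k,t)$, together with the observation that $\sum_k\pi_k\vphi(k,t) = dP/dP_0|_{\mathcal{F}_t^X}$ is finite $P_0$-a.s.\ as a Radon--Nikod\'ym density. Once these measure-theoretic points are in place, the argument is essentially a one-line application of Bayes' rule.
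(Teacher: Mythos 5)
Your proof is correct. The paper itself does not prove this lemma — it only cites Cohen and Solan (2013, Lemma 1) — but the argument you give is the standard generalized Bayes (Kallianpur--Striebel) derivation that that reference also uses: express $E[Y\mathbf{1}_{\{\theta=l\}}]$ two ways via the mixture $P=\sum_k\pi_k P_k$ and the densities $\vphi(k,t)$, equate, and divide. Your handling of the measure-theoretic fine points (Tonelli for the countable sum, integrability of both sides so that the identity of expectations implies a.s.\ equality, strict positivity of the denominator since $\vphi(0,t)\equiv 1$ and $\pi_0>0$ on the support $S$, and transfer from $P_0$-a.s.\ to $P$-a.s.\ by mutual absolute continuity) is sound and complete.
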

For a proof, see Cohen and Solan (2013, Lemma 1) \cite{Cohen2013}.
By Jacod and Shiryaev (1987, Ch.~III, Theorems 3.24 and 5.19)
\cite{Jacod1987} the process $\vphi$ admits the following
representation:
\begin{align}\notag
\vphi(l,t) &=\exp \left\{ \frac{l}{\sigma^2} X(t)-
\frac{1}{2}\left(\frac{l}{\sigma}\right)^2 t\right\}
\\\label{varphi_Z} &= \exp \left\{ \frac{l}{\sigma} W(t)-
\frac{1}{2}\left(\frac{l}{\sigma}\right)^2
t+\frac{\theta}{\sigma}\cdot \frac{l}{\sigma} t\right\},\;\;l\in
S,\;t\in[0,\infty).
\end{align}

\begin{rem}\label{rem:brown_present}
Notice that, based on the observed process $(X(s))_{0\leq s\leq t}$, the present value at time $t$,
$X(t)$, is a sufficient statistic for $\theta$.
That is, for every $l\in S$
and every $t\in[0,\infty)$, the value of the process $(\vphi(l,s))$
at time $t$, $\vphi(l,t)$, and therefore also $\q(l,t)$, depends on
the process $(X(s))_{0\leq s\leq t}$ only through $X(t)$. This means
that the Radon--Nikod\'{y}m density and the posterior distribution
process at time $t$ depend on $(X(s))$ through the present value
$X(t)$ and are independent of past values $(X(s))_{0\leq s< t}$.
%This property simplifies calculations.
%However, as we will see further-on, this property does not preserved for a
\end{rem}

%\begin{rem}\label{rem:brown_girsanov}
%A convenient way to understand the Girsanov  processes
%$\{(\varphi_l(t))\}_{l=0}^m$ is to examine the process $(X(t))$. We
%may assume that
%\begin{align}\notag
%&X(t)=\eta_0 t+\sigma Z(t),
%\end{align}
%where, under $P_0$, $(Z(t))$ is a Brownian motion. By the definition
%of $\etalhat$, the same process can be represented as
%\begin{align}\notag
%&X(t)=\eta_l t +\sigma \left(Z(t)-\frac{\etalhat}{\sigma} t \right).
%\end{align}
%Under $P_l$ the process $\left(Z(t)-\frac{\etalhat}{\sigma} t
%\right)$ is a Brownian motion. For details, see Jacod and Shiryaev
%(1987, Ch.~III) \cite{Jacod} and Cohen and Solan (2012, Remark 5).
%Therefore, we may describe the process $(\underline{q}(t))$ this
%way: there is a process $(Z(t))$ such that for every $\leqzlm$ and
%every $t\in[0,\infty)$
%\begin{align}\label{varphi_Z}
%\varphi_l (t) =\exp \left\{ \frac{\etalhat}{\sigma} Z(t)-
%\frac{1}{2}\left(\frac{\etalhat}{\sigma}\right)^2 t\right\},
%\end{align}
%and given $\theta = k$, the process $(Z(t)-\frac{\etakhat
%t}{\sigma})$ is a Brownian motion.
%\end{rem}

In order to emphasize the dependence of the processes $\vphi$ and
$\q$ on $\sigma$, we denote them by $\vphis$ and $\pis$.

%\begin{defn}
%The \emph{separation factor} under $\mathcal{F}_t^{\underline{q}}$,
%the filtration generated by the process $(\underline{q}(t))$, is the
%$m$ coordinates vector of absolute values
%$sf(\underline{q}):=\left(\frac{|\etaohat|}{\sigma},\ldots,\frac{|\etamhat|}{\sigma}\right)
%$.
%\end{defn}
%
%\begin{rem}
%Let $(a_1,\ldots,a_m)$ and $(b_1,\ldots,b_m)$ be two separation
%factors such that for every $\leqzlminus$, one has $a_l\geq b_l$. We
%would say that $(a_1,\ldots,a_m)$ is greater than
% $(b_1,\ldots,b_m)$ and denote it by $(a_1,\ldots,a_m)\geq (b_1,\ldots,b_m)$.
%If in addition there is at least one $k$ such that $a_k >  b_k$ then
%we say that $(a_1,\ldots,a_m)$ is strictly greater than
%$(b_1,\ldots,b_m)$ and denote it by $(a_1,\ldots,a_m) >
%(b_1,\ldots,b_m)$.
%\end{rem}
%\textbf{TRANSFER}One may see that by increasing $\etalhat$ or by decreasing $\sigma$
%the paths will be more concentrated around $(\mu_\theta t)$, and
%therefore it is easier to distinguish between the hypotheses.
%This
%illustrates the insight that as the separation factor increases it
%is easier to distinguish between the hypotheses.

\section{Deterministic and Random Parameter Systems}\label{sec:models}
In this section we define a sequence of processes indexed by $n\inN$
that converges in distribution (w.r.t. $n$) to a Brownian motion
with an unknown drift. For each such process we define a relative
posterior distribution process. In Section \ref{sec:DPS} we define a
model of a system that consists of arrivals with a known rate. In
Section \ref{sec:RPS} we generalize the model to a system that
consists of arrivals with an unknown rate. In Section \ref{sec:RPSn}
we define a sequence of systems with unknown rates. In Section
\ref{sec:Posterior} we show that under proper assumptions, the
scaled number of arrivals to these systems can be approximated
by a Brownian motion with an unknown drift.

\subsection{Deterministic Parameter System}\label{sec:DPS}

We define a system that consists of arrivals (each of size $1$) that
occur according to the random variables $\{v_i\}_{i\geq 1}$. We
assume that the system was activated before time $t=0$. The
parameter $t_v$ is the time passed since the last arrival that
occurred before time $t=0$. We start the numeration of arrivals from
time $t=0$.
 $v_1$ is interpreted as the time passed from $t=0$
until the first arrival; and for every $i\geq 2$, the random
variable $v_i$ is interpreted as the interarrival time between the
$(i-1)$-th and the $i$-th arrivals into the system. We present the
interarrival time distribution as $\frac{v}{\mu}$, where $v$ is a
nonnegative random variable with expectation $1$ and $\mu$ is a
positive constant.

Formally, a \emph{deterministic parameter system}
$$\mathcal{S}=\left( t_v, v, \mu, \{v_i\}_{i\geq 1}\right)$$ is given by
$\\\bullet\;\;$ a nonnegative constant $t_v$;
$\\\bullet\;\;$ a nonnegative random variable $v$;
$\\\bullet\;\;$ a positive constant $\mu$;
$\\\bullet\;\;$ a sequence of independent random variables
$\{v_i\}_{i\geq 1}$.

We make the following assumption on $\{v_i\}_{i\geq 1}$. %that the
%distribution of $v_1 + t_v$ is the same as the conditional
%distribution of $\frac{1}{\mu}v$ given $\left\{\frac{1}{\mu}v\geq
%t_v\right\}$, and for every $i\geq 2$, $v_i$ is distributed as the
%random variable $\frac{1}{\mu}v$.
\begin{assumption}\label{asu:t_u_t_v}
$\\ $For every $t\in[0,\infty)$, one has $P(v_1+t_v\geq
t)=P\left(\frac{1}{\mu}v\geq t \mid\frac{1}{\mu}v\geq t_v\right)$,
and for every $i\geq 2$, $v_i$ is distributed as the random variable
$\frac{1}{\mu}v$.
\end{assumption}
%
%From Assumption \ref{asu:t_u_t_v} it follows that the parameter
%$t_v$ is an observation of a random variable that is stochastically
%dominated by $\frac{1}{\mu}v$.
%
%%%%%%   Assumption Expectation 1   %%%%%%%%
We assume that $v$ has a finite variance and without loss of generality,
we assume that it has expectation $1$.
\begin{assumption}\label{asu:expectation1}
$\\ $\ref{asu:expectation1}.1. $E[v]=1$.
$\\ $\ref{asu:expectation1}.2. $\sigma_v^2:= {\rm Var}[v]<\infty$.
\end{assumption}
%%%%%%%%%%%%%%%%%%%%%%%%%%%%%%%%%%%%%%%%%%%%5
The \emph{arrival rate} is defined by $\frac{1}{E[v_2]}$, which, by
the definition of $v_i$ and Assumption \ref{asu:expectation1}.1,
equals $\mu$.

\subsubsection{The Counting Processes}
Define the process
\begin{align}\label{eq:L_def}
L(t)&:= \max  \left\{m\left|\; \sum_{i=1}^m v_i \leq t
\right\}\right., \;\;t\in[0,\infty).
\end{align}
The process $(L(t))$ counts the number of arrivals during the time
interval $[0,t]$, and it is called the \emph{counting process} of the
system.

\subsection{Random Parameter System}\label{sec:RPS}
Let $\theta$ be a random variable with bounded and
countable\footnote{All the results in the paper can be extended to a
bounded set $S$ with cardinality of the continuum, see Subsection
\ref{sec:continuum}.} support $S\subseteq\mathbb{R}$.
 For every $l\in\mathcal{S}$, let $\pi_l:=
P (\theta=l)$. %\footnote{In case $\theta$ is a continuous
%random variable then $\pi$ is the density of $\theta$.}
 Consider a
constant $t_v$ and a random variable $v$ that satisfy Assumptions
\ref{asu:t_u_t_v} and \ref{asu:expectation1}, respectively. For
every $l\in S$, let
$$\mathcal{S}_l=\left(t_v, v, \mu_l, \{v_{i,l}\}_{i\geq 1} \right)$$ be a
deterministic parameter system such that the random variables
$\{v_{i,l}\}_{i\geq 1}$ are independent of $\theta$. Let $(L_l(t))$
be the corresponding counting process. A \emph{random parameter
system} is a system where the parameter $\mu$ is chosen randomly
according to $\theta$. That is, it is a random variable, and its
support is the collection of the deterministic parameter systems
$\{\mathcal{S}_l\}_{l\in S}$. Formally, a random parameter system
$$\mathcal{RS}_{\pi}(\theta)=\left(t_v, v, \mu_\theta, \{v_{i}\}_{i\geq 1}, \pi \right)$$ is given by\footnote{Note that the
sequence $\{v_i\}_{i\geq 1}$ depends on the random variable
$\theta$; indeed, for every $i\geq 1$, one has $v_i=v_{i,\theta}
=\sum_{l\in S}\mathbb{I}_{\{\theta=l\}}v_{i,l} $. To avoid
cumbersome notation, we omit the dependence on $\theta$.}
$$\left(t_v, v, \mu_\theta, \{v_i\}_{i\geq 1} \right) =
\sum_{l\in S}\mathbb{I}_{\{\theta=l\}}\left(t_v, v, \mu_l,
\{v_{i,l}\}_{i\geq 1} \right),$$ where $\pi:=\{\pi_l\}_{l\in S}$.
The corresponding counting process is $$L(t) = L_\theta (t)%_{\underline{\pi}}
=\sum_{l\in S}\mathbb{I}_{\{\theta=l\}}L_l(t), \;\;t\in[0,\infty).$$

A DM operates a random parameter system. The parameter $\theta$
represents the type of the arrival rate and it is unknown to the DM.
For every $l\in S$, the parameter $\pi_l$ represents the probability
that the arrival rate's type is $\theta=l$.

For every $l\in S$, define the hypothesis $H_l:\;\theta=l$. Denote
by $P_l$ the probability measure over the space of realized paths
under the hypothesis $H_l$, and by $P:=P_{\pi} = \sum_{l\in S}\pi_l P_l$ the probability
measure that corresponds to the description above.
%From now on, unless stated otherwise, all
%expectations are taken with respect to the probability measure
%$P_{\underline{p}}$.
%Note that
%$Q_1(0)=\ldots=Q_m(0)$, otherwise the DM could eliminate hypotheses
%according to $Q(0)$.

\subsubsection{The Posterior Distribution Processes}

At time $t=0$, the DM observes the initial state $(t_v, \pi)$
without observing $\theta$, and thereafter he observes the counting process
$(L(t))$ continuously. At each time instant $t$, the DM can update
his belief on $\theta$ in a Bayesian fashion. Formally, the
\emph{posterior distribution process} is
\begin{align}
\p (l,t) :&=P(\theta = l \mid \mathcal{F}_{t}^{L}, \pi) = P(\theta =
l \mid L(t), t_v, v_1, \ldots, v_{L(t)}; t; \pi), \;\;l\in
S,\;t\in[0,\infty),
\end{align}
where $\mathcal{F}^{L}_t$ is the sigma-algebra generated by
$(L(s))_{0\leq s \leq t}$. This is the posterior distribution process at
time $t$ that $H_l$ is true given past observations of interarrivals
times from the system $v_1, \ldots, v_{L(t)}$, and the absence of
arrivals during the time interval
$\left(\sum_{i=1}^{L(t)}v_i,t\right]$. That is, the DM updates his
belief using all the available information he has from the observed
process up to time $t$.

\subsection{The $n$-th System}\label{sec:RPSn}
In this section we define a sequence of random parameter systems
indexed by a parameter $n$, which can be any natural number. All the
notation established in Section \ref{sec:RPS} is carried forward,
except that we append a superscript $n$ to denote a quantity which
depends on $n$. We assume that the random variables $v$ and $\theta$
are independent of $n$.

For every $n\in\mathbb{N}$, let
$$\mathcal{RS}^n_{\pi}(\theta)=\left(t^n_v, v, \mun_\theta,
\{\vin\}_{i\geq 1}, \pi \right)$$ be a sequence of random parameter
systems with the corresponding counting process $$L^n(t)=L^n_\theta(t) =
\sum_{l\in S}\mathbb{I}_{\{\theta=l\}}L^n_l(t),
\;\;t\in[0,\infty).$$
In order to define the diffusion approximation, we investigate the
$n$-th system at time $nt$. Without loss of generality we assume
that $0\in S$, since by taking $m\in S$ one can look at the random
variable $\theta - m$, and $0$ belongs to its support.
For every $n\in\mathbb{N}$ define the scaled posterior distribution process
\begin{align}\label{eq:tilde_pi}
\ptn(l,t):= \pn(l,nt), \;\;l\in S,\;t\in[0,\infty).
\end{align}

\subsection{The Posterior Distribution Process of the Limit of the Counting Processes}\label{sec:Posterior}
In this section we find a diffusion approximation related to the
sequence of processes $\{L^n\}_{n\inN}$. To this end, we require
that the rates under the different types are relatively
close, up to order of\footnote{In Remarks \ref{rem1} and \ref{rem2} we
explain why we require an order of $\frac{1}{\sqrt{n}}$ and detail
the differences in the analysis in case that the order is higher or smaller
than $\frac{1}{\sqrt{n}}$.} $\frac{1}{\sqrt{n}}$.
Loosely speaking, it states that $\muthn\approx \alpha + \frac{1}{\sqrt{n}}\theta$.
It reminds the heavy traffic condition, which asserts that
the difference between the arrival rate and the departure rate in a
G/G/1 queue is by order of $\frac{1}{\sqrt{n}}$.

For every $n\inN$,
let $h^n:S\rightarrow \mathbb{R}$ be the function
\begin{align}\label{eq:h}
h^n(l):=\sqrt{n}(\muln-\muzn).
\end{align}
\begin{assumption}\label{asu:lambda_rates}
$\\ $\ref{asu:lambda_rates}.1. $\limn \underset{l\in
S}{\sup}\,|h^n(l)-l| = 0 $.
$\\ $\ref{asu:lambda_rates}.2. $\limn \muzn = \alpha$, where $\alpha$
is a positive constant.
\end{assumption}
In Remark \ref{rem1} below we discuss about the necessity of this assumption.
Assumption \ref{asu:lambda_rates}.1 relates\footnote{Assumption
\ref{asu:lambda_rates}.1 can be written also as $\limn
e_\infty(h^n,\IS)=0$, where $\IS$ is the identity function on $S$.} to the difference between the arrival rates
under the different types. It states that every two possible arrival rates
are distinguished by an order of $\frac{1}{\sqrt{n}}$. Assumption
\ref{asu:lambda_rates}.2 states that the limit of the sequence of
rates $\{\muzn\}_{n\inN}$ is positive. Together with Assumption
\ref{asu:lambda_rates}.1 it implies that
\begin{align}\label{eq:A2}
\limn \underset{l\in S}{\sup}\,|\muln - \alpha |=\limn
\underset{l\in S}{\sup}\,|\muln - \muzn | = 0.
\end{align}
This assumption is also fundamental for the diffusion approximation
of the sequence of processes $\ptn$.
%
%
%As mentioned in footnotes 5 and
%19, if there is a parameter $l\in S$ such that the difference
%between the rates $\muln$ and $\muzn$ is higher than
%$O(\frac{1}{\sqrt{n}}),$ then for every $t>0$ one of the following
%happens $\limn\ptn(l,t)=0$ or $\limn\ptn(0,t)=0$. That is, one of
%the limits (as $n$ goes to infinity) of the posteriors converges in
%any infinitesimal time interval to $0$. On the other hand, if there
%is a parameter $l\in S$ such that the difference between the rates
%$\muln$ and $\muzn$ is lower than $O(\frac{1}{\sqrt{n}}),$ then the
%DM would not be able to distinguish between the hypotheses $H_l$ and
%$H_0$. %
%
%

For every $n\inN$, denote
$$\check{L}^n(t):=\frac{L^n(nt)-\muthn nt}{\sqrt{n}},\;\;t\in[0,\infty).$$
The following result was proved, e.g., in Billingsley (1999, Theorem
14.6) \cite{Billingsley1999}.
\begin{prop}
Under Assumptions \ref{asu:t_u_t_v}, \ref{asu:expectation1}, and
\ref{asu:lambda_rates}.2, there exists a standard Brownian motion
process $(W(t))$, independent of $\theta$, such that
\begin{align}\label{eq:L_diffusion}
%
%\hat{L}(t)&:\eqd
\limn \check{L}^n\eqd \sigma_v\sqrt{\alpha}W.
\end{align}
\end{prop}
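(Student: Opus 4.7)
The plan is to reduce the statement to the classical functional central limit theorem (FCLT) for renewal counting processes, applied conditionally on the unknown type $\theta$. I begin by fixing $l\in S$ and conditioning on $\{\theta=l\}$. Under this conditioning the random parameter system collapses to the deterministic parameter system $\mathcal{S}_l^n$, whose interarrival times $\{v_{i,l}^n\}_{i\geq 2}$ are, by Assumption \ref{asu:t_u_t_v}, i.i.d.\ copies of $v/\muln$ with mean $1/\muln$ and finite variance $\sigma_v^2/(\muln)^2$ (Assumption \ref{asu:expectation1}). Invoking the renewal FCLT of Billingsley (1999), Theorem~14.6, produces a standard Brownian motion $W_l$ such that
\begin{align*}
\frac{L_l^n(n\,\cdot) - \muln\, n\,\cdot}{\sqrt{n}} \;\eqd\; \frac{\sigma_v/\muln}{(1/\muln)^{3/2}}\,W_l \;=\; \sigma_v\sqrt{\muln}\,W_l,
\end{align*}
where the convergence is in distribution on $\mathcal{D}_\infty$.

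Next I would argue that the boundary contribution at $t=0$ is asymptotically negligible. Assumption \ref{asu:t_u_t_v} allows the first interarrival $v_{1,l}^n$ to have the residual-lifetime distribution rather than the distribution of $v/\muln$; however, a standard coupling shows that replacing $v_{1,l}^n$ by a fresh copy of $v/\muln$ shifts all subsequent arrival epochs by a single $O(1)$ quantity and therefore alters $L_l^n(n\,\cdot)$ by at most $O(1)$ counts uniformly on compact intervals, contributing only $O(1/\sqrt{n})$ to $\check{L}^n$. Combining this with Assumption \ref{asu:lambda_rates}.2, which together with (\ref{eq:A2}) yields $\muln\to\alpha$ uniformly in $l$, gives $\sigma_v\sqrt{\muln}\to\sigma_v\sqrt{\alpha}$, so conditional on $\{\theta=l\}$ the process $\check{L}^n$ converges in distribution to $\sigma_v\sqrt{\alpha}\,W_l$.

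To remove the conditioning, construct the family $\{W_l\}_{l\in S}$ jointly independent of $\theta$ and set $W:=W_\theta$. Then $W$ is a standard Brownian motion independent of $\theta$, and for any bounded continuous $f\colon\mathcal{D}_\infty\to\R$, dominated convergence against the probability weights $\{\pi_l\}$ yields
\begin{align*}
E[f(\check{L}^n)] \;=\; \sum_{l\in S}\pi_l\,E\!\left[f(\check{L}^n)\,\big|\,\theta=l\right] \;\longrightarrow\; \sum_{l\in S}\pi_l\,E\!\left[f(\sigma_v\sqrt{\alpha}\,W_l)\right] \;=\; E[f(\sigma_v\sqrt{\alpha}\,W)],
\end{align*}
which is exactly the unconditional statement $\limn \check{L}^n \eqd \sigma_v\sqrt{\alpha}\,W$.

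The main technical obstacle is the uniformity in $l$ required for the deconditioning step when $S$ is countably infinite: one must pass to the limit inside the countable sum over $l\in S$. This is supplied by two ingredients, namely the uniform convergence $\muln\to\alpha$ in $l$ afforded by (\ref{eq:A2}) (which ensures that the FCLT's scaling constant converges uniformly), and the boundedness of the test functional $f$ (which dominates the summands and permits termwise dominated convergence).
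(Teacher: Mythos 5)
Your proposal is correct, and it fills in precisely the argument the paper leaves implicit: the paper offers no proof here beyond the citation of Billingsley (1999, Theorem 14.6), and the route you take — condition on $\{\theta=l\}$, apply the renewal FCLT to the resulting ordinary renewal process, control the residual first interarrival by coupling, and then decondition by mixing over $\{\pi_l\}_{l\in S}$ — is exactly the intended one.

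Two small points of hygiene. The display
\begin{align*}
\frac{L_l^n(n\,\cdot)-\muln n\,\cdot}{\sqrt{n}}\;\eqd\;\sigma_v\sqrt{\muln}\,W_l
\end{align*}
is not a well-posed limit as written, since the scaling constant on the right still depends on $n$: what you actually have is a triangular array of renewal processes. The clean way to extract the claim from Billingsley's single-sequence theorem is the deterministic time change $L_l^n(nt)=\tilde{L}(n\muln t)$, where $\tilde{L}$ is the renewal process built from the unit-mean, $n$-independent $v_i$'s; apply Theorem 14.6 to $\tilde{L}$ and compose with $t\mapsto\muln t\to\alpha t$, using $W(\alpha t)\eqd\sqrt{\alpha}W(t)$. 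Second, the conclusion requires $\muln\to\alpha$ for every $l$ in the support (which you obtain from Eq.~(\ref{eq:A2})), but Eq.~(\ref{eq:A2}) is derived in the paper from \emph{both} parts of Assumption \ref{asu:lambda_rates}, whereas the Proposition's hypothesis list names only Assumption \ref{asu:lambda_rates}.2; without Assumption \ref{asu:lambda_rates}.1 one has no control on $\muln$ for $l\neq 0$, and the conditional limits would not all carry the coefficient $\sqrt{\alpha}\sigma_v$. Your implicit use of the full Assumption \ref{asu:lambda_rates} is the right one; the mismatch originates in the paper's own statement, not in your argument. Finally, a minor overstatement: the deconditioning over a countable $S$ does not require $\muln\to\alpha$ uniformly in $l$; pointwise convergence for each $l$, combined with $|E[f(\cdot)]|\le\|f\|_\infty$, already licenses termwise dominated convergence.
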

%Eq.~(\ref{eq:Ltheta_diffusion}) can be summarized as follows
%
%
Although the DM observes the process $(L^n(nt))$, he does not
observe $\theta$, and therefore does not observe $\mu_\theta$. That
is, the parameter $\mu_\theta$ is not known by the DM. Hence, the
sigma-algebra that is generated by the relative process
\begin{align}\notag
\frac{L^n(nt)-\muthn nt }{\sqrt{n}}, \;\;t\in[0,\infty),
\end{align}
is different from the sigma-algebra $\mathcal{F}^{L^n}_{nt}$. We
therefore define the relative process\footnote{Notice that $\muthn$
is replaced by $\muzn$.}
\begin{align}\notag
\tilde{L}^n(t):=\frac{L^n(nt)-\muzn nt }{\sqrt{n}},
\;\;t\in[0,\infty). \notag
\end{align}
For every $n\inN$ the process $(\tilde{L}^n(t))$ can be calculated by the DM, since the sigma-algebra that is generated by $(\tilde{L}^n(t))$ is
$\mathcal{F}^{\tilde{L}^n}_{t}:=\mathcal{F}^{L^n}_{nt}$, which is observed by the DM. The process
$(\tilde{L}^n(t))$ can be expressed as
\begin{align}\label{eq:Lnhat2}
\tilde{L}^n(t)=\frac{L^n(nt)-\muthn nt }{\sqrt{n}} +
\sqrt{n}(\muthn-\muzn) t , \;\;t\in[0,\infty).
\end{align}
From Eq.~(\ref{eq:L_diffusion}) the limit of the first term is a
Brownian motion with a standard deviation $\sqrt{\alpha}\sigma_v$ and
without drift, and from Assumption \ref{asu:lambda_rates}.1 the
limit of the second term is $(\theta t)$.
Therefore, the limit process:
\begin{align}\notag
&\tilde{L}(t):\eqd \limn\tilde{L}^n(t), \;\;t\in[0,\infty),
\end{align}
exists and it is a Brownian motion with an unknown drift. This is
summarized in the following proposition.

\begin{prop}\label{prop:hat_L}
Under Assumptions \ref{asu:t_u_t_v}, \ref{asu:expectation1}, and
\ref{asu:lambda_rates}, there exists a standard Brownian motion
process $(W(t))$, independent of $\theta$, such that
\begin{align}\label{eq:hat_L}
&\tilde{L}(t)= \theta t +\sqrt{\alpha}\sigma_v W(t),
\;\;t\in[0,\infty).
\end{align}
\end{prop}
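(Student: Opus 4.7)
The plan is to start from the decomposition \eqref{eq:Lnhat2}, which writes
\[
\tilde{L}^n(t) \;=\; \check{L}^n(t) + h^n(\theta)\, t, \qquad t\in[0,\infty),
\]
so that identifying the weak limit of $\tilde{L}^n$ reduces to identifying the joint weak limit of the two summands and invoking continuity of addition. I would treat the drift and diffusion summands separately and combine them via a Slutsky-type argument at the end.

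For the drift summand $h^n(\theta)\,t$, Assumption \ref{asu:lambda_rates}.1 supplies the deterministic bound $\sup_{l\in S}|h^n(l)-l|\to 0$, so on every compact $[0,T]$ one has $\sup_{t\in[0,T]}|h^n(\theta)t-\theta t|\le T\sup_{l\in S}|h^n(l)-l|\to 0$ pathwise. Hence the drift summand converges u.o.c., almost surely, to $(\theta t)$, introducing no extra randomness beyond $\theta$ itself. This step is essentially free and requires no work at the process level.

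The diffusion summand is the harder ingredient. The cited FCLT gives $\check{L}^n\eqd \sigma_v\sqrt{\alpha}\,W$ for some standard Brownian motion $W$, but that statement does not guarantee independence of $W$ from $\theta$. To obtain this I would exploit the product structure of the model: by construction, for each $l\in S$ the interarrival sequence $\{v_{i,l}\}_{i\geq 1}$ is independent of $\theta$, hence so is $\check{L}^n_l(t):=(L^n_l(nt)-\muln nt)/\sqrt{n}$. Applying the renewal FCLT to each $l$ separately, and noting via \eqref{eq:A2} that the limiting law $\sigma_v\sqrt{\alpha}W$ does not depend on $l$, one can couple the family $\{\check{L}^n_l\}_{l\in S}$ with a single standard Brownian motion $W$ on a common probability space, independent of $\theta$. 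Since $\check{L}^n=\sum_{l\in S}\mathbb{I}_{\{\theta=l\}}\check{L}^n_l$, this $W$ then serves as the weak limit of $\check{L}^n$ itself.

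Combining the two convergences (drift u.o.c.~a.s., diffusion in distribution with $W\perp\theta$) through the continuous mapping theorem for addition on $\mathcal{E}_\infty$ then yields \eqref{eq:hat_L}. The main obstacle I foresee is precisely the coupling step that enforces $W\perp\theta$; once that is in place, the remainder is straightforward bookkeeping on top of the already-stated renewal FCLT, together with the elementary observation that continuous-functional limits and a.s.~u.o.c.~limits may be added.
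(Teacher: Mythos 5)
Your proposal follows the paper's argument essentially line for line: you use the same decomposition \eqref{eq:Lnhat2}, send the drift summand to $\theta t$ via Assumption \ref{asu:lambda_rates}.1, appeal to the renewal FCLT for the diffusion summand, and combine. The paper's own "proof" is the informal discussion preceding the proposition together with the FCLT proposition that cites Billingsley, so the only place you go beyond the paper is in spelling out why the limiting $W$ can be chosen independent of $\theta$---a point the paper delegates to that earlier FCLT proposition, and which in the paper's explicit construction (Appendix, Section \ref{sec:appendix_prob_space}) holds because all $\check{L}^n_l$ are built from one common i.i.d.\ sequence $\{u_i\}$ on a product space with $\theta$; your coupling sketch is a slightly heavier mechanism aimed at the same fact.
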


From the definitions of $\vphis$ and $\pis$ (recall Eqs.~(\ref{eq:pi}) and (\ref{varphi}) and the notation given after Remark \ref{rem:brown_present}) we deduce the following corollary.
\begin{cor}\label{cor:advantage_of_posterior}
Let $\vphit $ and $\pt $ be the Radon--Nikod\'{y}m derivative
process and the posterior distribution process, respectively, under
$\mathcal{F}^{\tilde{L}}_t$. Under Assumptions \ref{asu:t_u_t_v},
\ref{asu:expectation1}, and \ref{asu:lambda_rates}, the process
$\vphit$ is distributed as
$\boldsymbol{\varphi}_{\sqrt{\alpha}\sigma_v}$ and the process
$\pt$ is distributed as
$\boldsymbol{\pi}_{\sqrt{\alpha}\sigma_v}$.
%\begin{align}\label{varphihat}
%%
%&\vphihat \eqd
%\boldsymbol{\varphi}_{\sqrt{\alpha}\sigma_v},\\\label{phat}
%%
%&\phat \eqd\boldsymbol{\pi}_{\sqrt{\alpha}\sigma_v}.
%\end{align}
\end{cor}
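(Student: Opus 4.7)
The plan is to recognize Corollary \ref{cor:advantage_of_posterior} as an almost immediate consequence of Proposition \ref{prop:hat_L} combined with the general construction of the posterior process of a Brownian motion with unknown drift carried out in Section \ref{sec:auxiliary}. The essential observation is that once the limit process $\tilde{L}$ has been identified as a Brownian motion with drift $\theta$ and diffusion coefficient $\sqrt{\alpha}\sigma_v$, the objects $\vphit$ and $\pt$ are, by definition, the Radon--Nikod\'ym and posterior processes associated with precisely this model, so the formulas of Section \ref{sec:auxiliary} apply verbatim with $\sigma$ replaced by $\sqrt{\alpha}\sigma_v$.

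Concretely, the first step is to invoke Proposition \ref{prop:hat_L} to represent $\tilde{L}$ in the form $\tilde{L}(t) = \theta t + \sqrt{\alpha}\sigma_v W(t)$ on $[0,\infty)$, with $(W(t))$ a standard Brownian motion independent of $\theta$. This places $\tilde{L}$ inside the auxiliary framework $X(t) = \theta t + \sigma W(t)$ of Section \ref{sec:auxiliary} under the identification $\sigma = \sqrt{\alpha}\sigma_v$, and Assumptions \ref{asu:expectation1} and \ref{asu:lambda_rates}.2 ensure that this parameter is a positive constant, so the auxiliary model is well posed and $\theta$ plays the role of the unknown drift with prior $\pi$.

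The second step is a matching of filtrations and definitions. The sigma-algebra $\mathcal{F}_t^{\tilde{L}}$ is the natural filtration of $\tilde{L}$, and $\vphit$ and $\pt$ are by construction the objects defined in (\ref{varphi}) and (\ref{eq:pi}) applied to $\tilde{L}$. Substituting $\sigma = \sqrt{\alpha}\sigma_v$ into the closed-form representation (\ref{varphi_Z}) yields the distributional identity $\vphit \eqd \boldsymbol{\varphi}_{\sqrt{\alpha}\sigma_v}$, and inserting this into the Bayes-rule formula (\ref{frac_q_l_q_m}) gives $\pt \eqd \piasv$.

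In short, there is no genuine obstacle: the only non-routine input is Proposition \ref{prop:hat_L}, which has already been established, and what remains is simply a bookkeeping identification of parameters. The brevity of this argument is exactly the point the authors wish to underline, because the body of the paper will demonstrate that $\pt$ -- the posterior of the limit process -- differs, in general, from $\limn \ptn$ -- the limit of the discrete posteriors -- so it is the latter, and not $\pt$, that correctly approximates $\ptn$ for large $n$.
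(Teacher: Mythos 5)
Your argument is correct and coincides with the paper's intended derivation: the corollary is stated without a separate proof precisely because, once Proposition \ref{prop:hat_L} identifies $\tilde{L}(t)=\theta t+\sqrt{\alpha}\sigma_v W(t)$, the auxiliary model of Section \ref{sec:auxiliary} applies with $\sigma=\sqrt{\alpha}\sigma_v$, and Eqs.~(\ref{varphi_Z}) and (\ref{frac_q_l_q_m}) then give $\vphit\eqd\boldsymbol{\varphi}_{\sqrt{\alpha}\sigma_v}$ and $\pt\eqd\piasv$. Your bookkeeping identification of filtrations and parameters is exactly the step the paper elides by the phrase ``we deduce the following corollary.''
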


The following remark explains the requirement that the proper
rates under the different types are relatively close, up to order of
$\frac{1}{\sqrt{n}}$ (Assumption \ref{asu:lambda_rates}.1).
\begin{rem}\label{rem1}
If there exists a parameter value $l^*\in S$ such that the difference
between the rates $\mu_{l^*}^n$ and $\muzn$ satisfies\footnote{Hereafter, the notation $|f_1(n)-f_2(n)|>>f_3(n)$ (resp.~$<<$) means that $\limn |f_1(n)-f_2(n)|/f_3(n) = \infty$ (resp. $0$).}
$|\mu_{l^*}^n - \muzn|>> \frac{1}{\sqrt{n}},$ then under $\theta=l^*$ the second term in
Eq.~(\ref{eq:Lnhat2}) converges to $\pm\infty$ and the DM would be
able to distinguish between them. On the other hand, if there is a
parameter value $l^*\in S$ such that the difference between the rates
$\mu_{l^*}^n$ and $\muzn$ satisfies
$|\mu_{l^*}^n - \muzn|<< \frac{1}{\sqrt{n}},$  then
under $\theta=l^*$ the second term in Eq.~(\ref{eq:Lnhat2}) converges
to $0$ and the DM would not be able to distinguish between them. The
analysis without Assumption \ref{asu:lambda_rates}.1 would be
similar, but with more complex notation.
\end{rem}

\section{The Limit of the Posterior Distribution Processes}\label{sec:Limit_Processes}

In this section we find the limit of the posterior distribution
processes, $\limn\ptn$, and study the relation between this limit and
the posterior distribution process $\pt$. In Section
\ref{sec:Densities_Assumptions} we formulate assumptions on the
density of the random variable $v$. In Section \ref{sec:Examples} we
provide examples of densities that satisfy these assumptions and an
example of a density that does not. Section
\ref{sec:The_Posterior_Process} gives the main theorems in the
paper. We find the limit of the posterior distribution processes and
discuss its properties. In Section \ref{sec:extensions} we discuss about
some generalizations.

\subsection{Assumptions on Densities}\label{sec:Densities_Assumptions}

In order to find the diffusion approximation for the sequence of
processes $\{\ptn\}_{n\inN}$, we need several assumptions on the
distribution of the random variable $v$. If no such assumptions are
made, then it may happen that for some $l\in S$ and some $t\geq 0 $,
the posterior probability $\ptn(l,t)$ will vanish with a positive probability
for some $n\inN$. Such cases differ from each other in the form of
their analysis and require different tools than the ones that we are
using in this paper. The following assumption states that the support of the interarrival times is the positive part of the axis. This assumption rules out a situation
where a single arrival can reveal a lot of information.
\begin{assumption}\label{asu:density1}
The random variable $v$ has the probability density
function\footnote{$\mathcal{C}^3$ is the class of real-valued
functions with continuous third derivative.} $({\rm pdf})$ $f \in
\mathcal{C}^3$, with the support $(0,\infty)$.
\end{assumption}

\begin{rem}\label{rem:density_f_to_f_theta}
For every $l\in S$, denote by $F_l^n$ the cumulative distribution
function (cdf) of $v_{1,l}^n$ and by $f_l^n$ the pdf of $v_{1,l}^n$.
By Assumption \ref{asu:t_u_t_v}, for every $s> 0$, $F_l^n (s) =
F(s\muln)$, where $F(s)$ is the cdf of $v$. Moreover, for every $s>
0$, $f_l^n (s) = \muln f (s\muln)$, while $f_l^n (s) =0$ for $s\leq
0$. In particular, for $l\in S$ the support of $v_{1,l}^n$ is
$(0,\infty)$.
\end{rem}

In the analysis of the posterior distribution process $(\ptn(l,t))$,
the following log-likelihood terms will appear:
$$\ln\left(\frac{f_l(s)}{f_0(s)}\right),\;
\ln\left(\frac{1-F_l(s)}{1-F_0(s)}\right).
$$
When we will use the representations of the cdfs that were
introduced in Remark \ref{rem:density_f_to_f_theta} and the Taylor
approximation for the log-likelihood ratios above, we will encounter the terms
$$\frac{f'(s)}{f(s)},
\frac{f(s)}{1-F(s)}.
$$
The following assumptions state that these functions are
``sufficiently" bounded.
%%%%%%   Assumption Density   %%%%%%%%
%
%\begin{assumption}\label{asu:density22}
%$\\ $B4. The random variable $\frac{f'(u)}{f(u)}u$ has finite standard deviations,
%denoted by $\sigma_f$.
%%
%$\\ $B5. There exists a monotone nondecreasing function $M_f(x)$ such
%that for every $x\in\Rplus$
%$$\left|\left(\frac{f'(x)}{f(x)}\right)''x^3\right|\leq M_f(x),$$
%such that for sufficiently small $\epsilon>0$
%$$E\left[M_f\left(\left(1+\epsilon\right)v\right)\right]<\infty.$$
%$\\$B6. There exists a monotone nondecreasing function $N(x)$ such
%that for every $x\in\Rplus$
%$$\left|\frac{xf(x)}{1-F(x)}\right|\leq N(x),$$
%such that for sufficiently small $\epsilon>0$
%$$E\left[N\left(\left(1+\epsilon\right)v\right)\right]^2<\infty.$$
%\end{assumption}
%
%
\begin{assumption}\label{asu:density22}
$\\ $\ref{asu:density22}.1. The random variable $\frac{f'(v)}{f(v)}v$
has a finite standard deviation, denoted by $\sigmaf$.
%
%$\\ $B5. There exist positive constant $ M , \epsilon_M>0$ such that for every $|\gamma|<\epsilon_M$
%
%$$\left|E\left[\left(\frac{f'((1+\gamma)v)}{f((1+\gamma)v)}\right)''((1+\gamma)v)^3\right]\right|\leq M.$$
%$\\$B6. There exist positive constant $ N , \epsilon_N>0$ such that for every $|\gamma|<\epsilon_N$
%$$E\left[\frac{f((1+\gamma)v)}{1-F((1+\gamma)v)}(1+\gamma)v\right]^2\leq N.$$
%$\\$B6. There exist a positive constant $ \epsilon_N>0$ and a non decreasing function $N(x)$
%such that for every $|\gamma|<\epsilon_N$ and every $x\in(0,\infty)$
%$$\left|\frac{f((1+\gamma)x)}{1-F((1+\gamma)x)}(1+\gamma)x\right|\leq N(x),$$
%and
%$$E[N(v)]^2<\infty.$$
%OR EQUIVALENTLY$\\$
$\\ $\ref{asu:density22}.2. There exist a monotone nondecreasing
function $M(x)$ and a positive parameter $\epsilon_M>0$, such that
for every $x\in(0,\infty)$
$$\left|\left(\frac{f'(x)}{f(x)}\right)''x^3\right|\leq M(x)$$
and
$$E\left[M\left(\left(1+\epsilon_M\right)v\right)\right] < \infty.$$
$\\ $\ref{asu:density22}.3. There exist a monotone nondecreasing
function $N(x)$ and a positive parameter $\epsilon_N>0$, such that
for every $x\in(0,\infty)$
$$\left|\frac{xf(x)}{1-F(x)}\right|\leq N(x)$$
and
$$E\left[N\left(\left(1+\epsilon_N\right)v\right)\right]^2<\infty.$$
\end{assumption}

\subsection{Examples}\label{sec:Examples}
In this section we provide an example of a family of distributions that
satisfy Assumptions \ref{asu:expectation1}, \ref{asu:density1}, and
\ref{asu:density22}. In fact, most of the frequently used continuous
distributions that satisfy Assumptions \ref{asu:expectation1} and
\ref{asu:density1} also satisfy Assumption \ref{asu:density22}. We
then present an example of a distribution that satisfies Assumptions
\ref{asu:expectation1} and \ref{asu:density1}, but not Assumption
\ref{asu:density22}.1. This example illustrates that Assumption
\ref{asu:density22}.1 is independent of Assumptions
\ref{asu:expectation1} and \ref{asu:density1}. As for Assumptions
\ref{asu:density22}.2 and \ref{asu:density22}.3, we do not know
whether they follow from previous assumptions or they are
independent of them.

\subsubsection{An Example that satisfies the Assumptions}\label{sec:Positive_Example}

Let $v$ be a random variable with the support $(0,\infty)$ whose pdf
is given by
$$f(x) = w_1(x)e^{w_2(x)},$$
where $w_1(x)$ and $w_2(x)$ are sums of power functions and the
powers in $w_2(x)$ are positive. Denote by $d$ the highest power in
$w_2(x)$. Since $f(x)$ is a pdf with the support $(0,\infty)$,
$w_1(x)>0$ and the smallest power in $w_1(x)$ is higher than $-1$.
Clearly $f \in \mathcal{C}^3$. By simple computations one can verify that
there exists a constant $C$, such that for every $x\in(0,\infty)$
the following holds
\begin{align}\notag
\left|\frac{f'(x)}{f(x)}x\right|,
\left|\left(\frac{f'(x)}{f(x)}\right)''x^3\right|,
\left|\frac{xf(x)}{1-F(x)}\right|\leq C\max\{1,x^d\}.
\end{align}
Assumption \ref{asu:density22}.1 follows since
$E[\max\{1,x^d\}]^2<\infty$. Since for every $\epsilon>0$,
$$E[\max\{1,((1+\epsilon)x)^d\}], E[\max\{1,((1+\epsilon)x)^d\}]^2<\infty,$$
and the functions $C\max\{1,x^d\}$ and $(C\max\{1,x^d\})^2$ are
monotone nondecreasing, it follows that Assumptions
\ref{asu:density22}.2 and \ref{asu:density22}.3 hold by choosing
$M(x):=C\max\{1,x^d\}$, $N(x):=C^2\max\{1,x^d\}^2,$ and $\epsilon_N,
\epsilon_M$ to be arbitrary positive constants.

This family of distributions contains the gamma, Weibull,
Maxwell--Boltzmann, and Rayleigh distributions.

\subsubsection{An Example that does not Satisfy Assumption \ref{asu:density22}.1}\label{sec:Counterexample} We show that
there exists a random variable that satisfies Assumptions
\ref{asu:expectation1} and \ref{asu:density1} and fails to satisfy
Assumption \ref{asu:density22}.1. For every $d\in\mathbb{N}$, define
$x_d:=1+\frac{1}{2}+\cdots+\frac{1}{d}$. Let $g(x)$ be the following
function: for every $d\in\mathbb{N}$ and every $x_d < x \leq
x_{d+1}$ define $g(x):=\frac{1}{d+1}(x-x_d)$. Then $g$ is not a pdf
of a random variable and it is not differentiable. However, by
smoothing $g$ and changing its values on a bounded interval, one can
construct a random variable with a pdf function $f$ that satisfies
Assumptions \ref{asu:expectation1} and \ref{asu:density1}. Since
Assumption \ref{asu:expectation1}.1 follows from Assumption
\ref{asu:expectation1}.2 by normalization, and Assumption
\ref{asu:density1} is only a matter of smoothing, it is sufficient to show that
Assumption \ref{asu:expectation1}.2 can be satisfied. This follows
from the following series of equalities and inequalities:
\begin{align}\notag
\int_0^\infty x^2g(x)dx &= \sum_{d=1}^{\infty}
\int_{x_{d}}^{x_{d+1}} x^2g(x)dx \leq \sum_{d=1}^{\infty}
x_{d+1}^2\int_{x_d}^{x_{d+1}} g(x)dx =
\frac{1}{2}\sum_{d=1}^{\infty} x_{d+1}^2\frac{1}{(d+1)^3}
\\\notag
&\approx  \frac{1}{2}\sum_{d=1}^{\infty}
\ln^2(d+1)\frac{1}{(d+1)^3}<\infty.
\end{align}
Assumption \ref{asu:density22}.1, however, does not hold, since
\begin{align}\notag
\int_0^\infty \left(\frac{g'(x)}{g(x)}x\right)^2g(x)dx &=
\sum_{d=1}^{\infty} \int_{x_d}^{x_{d+1}}
\left(\frac{g'(x)}{g(x)}x\right)^2g(x)dx \geq \sum_{d=1}^{\infty}
(d+1)^2x_d^2\int_{x_d}^{x_{d+1}} g(x)dx \\\notag
&=\frac{1}{2}\sum_{d=1}^{\infty} (d+1)^2x_d^2\frac{1}{(d+1)^3}
\approx \frac{1}{2}\sum_{d=1}^{\infty}
\ln^2(d+1)\frac{1}{d+1}=\infty.
\end{align}

\subsection{The Limit of the Posterior Distribution Processes}\label{sec:The_Posterior_Process}
Similar to the constructions of the Radon--Nikod\'{y}m process and
the posterior distribution process in the model of the Brownian motion with an
unknown drift (see Eqs.~(\ref{varphi}) and (\ref{frac_q_l_q_m})),
one can show that for every $l\in S$ and every $t\in[0,\infty)$,
\begin{align}\label{frac_p_l_p_m}
\ptn(l,t) = \frac{\pi_l\vphitn (l,t)}{\sum_{k\in S} \pi_k\vphitn
(k,t)}, \;\;l\in S,\;t\in[0,\infty),
\end{align}
where \begin{align}\label{varphin} \vphitn(l,t):=
\frac{d(P^n_l\mid\mathcal{F}^{L^n}_{nt} )
}{d(P^n_0\mid\mathcal{F}^{L^n}_{nt} )}, \;\;l\in S,\;t\in[0,\infty)
\end{align}
is the Radon--Nikod\'{y}m process. In fact, for every $l\in S$,
$(\vphitn (l,t))$ is the likelihood ratio process w.r.t.~$t$ and it
satisfies
\begin{align}\label{varphin2}
\vphitn (l,t) = \frac{P^n(L^n(nt), v^n_1, \ldots, v^n_{L^n(nt)}; nt
\mid \theta=l, t^n_v )} {P^n(L^n(nt), v^n_1, \ldots, v^n_{L^n(nt)};
nt \mid \theta=0, t^n_v )}, \;\;t\in[0,\infty).
\end{align}
The next theorem shows that the Radon--Nikod\'{y}m process and the
posterior distribution process of the $n$-th system converge to
$\vphis$ and $\pis$, respectively, for a properly chosen $\sigma$.

\begin{thm}[Main Theorem]\label{thm:convergence_ptt}
Under Assumptions \ref{asu:t_u_t_v}, \ref{asu:expectation1},
\ref{asu:lambda_rates}, \ref{asu:density1}, and \ref{asu:density22},
the following hold:
\begin{align}\label{Main_varphi}
\limn\vphitn&\eqd \vphiasf,\\\label{Main_p}
\limn\ptn&\eqd\piasf.
\end{align}
\end{thm}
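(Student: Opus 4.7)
The plan is to analyze $\ln\vphitn(l,t)$ via the explicit likelihood-ratio formula (\ref{varphin2}), Taylor-expand in the small parameter $\delta_l^n := (\muln-\muzn)/\muzn = h^n(l)/(\sqrt{n}\,\muzn)$, and identify the limit with $\ln\vphiasf(l,t)$. The convergence (\ref{Main_p}) will then follow from the continuous mapping theorem applied to the representation (\ref{frac_p_l_p_m}), paralleling (\ref{frac_q_l_q_m}). Writing $N:=L^n(nt)$, $R^n(t):=nt-\sum_{i=1}^N v_i^n$, and $u_i:=\muzn v_i^n$, Remark~\ref{rem:density_f_to_f_theta} gives
\begin{align*}
\ln\vphitn(l,t) = N\ln(1+\delta_l^n) + \sum_{i=1}^{N}\ln\frac{f(u_i(1+\delta_l^n))}{f(u_i)} + B^n(l,t),
\end{align*}
where $B^n(l,t)$ collects the first-interarrival correction induced by $t_v^n$ and the survival-function ratio $\ln\bigl((1-F(\muln R^n(t)))/(1-F(\muzn R^n(t)))\bigr)$.

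\textbf{Moment identities and Taylor expansion.} Set $g(u) := uf'(u)/f(u)$. Integration by parts---valid since Assumptions~\ref{asu:expectation1} and \ref{asu:density22} force $xf(x)\to 0$ and $x^2 f(x)\to 0$ as $x\to\infty$---yields
\begin{align*}
E[g(v)] = -1, \qquad E[g(v)^2] = 1+\sigma_f^2, \qquad E[u g'(u) - g(u) - 1] = -\sigma_f^2.
\end{align*}
A second-order Taylor expansion of $\rho(x):=\ln f(x)$ about $u_i$, combined with $\ln(1+\delta_l^n) = \delta_l^n - (\delta_l^n)^2/2 + O((\delta_l^n)^3)$, gives
\begin{align*}
\ln\vphitn(l,t) = \delta_l^n\sum_{i=1}^{N}(1+g(u_i)) + \tfrac{(\delta_l^n)^2}{2}\sum_{i=1}^{N}(u_i g'(u_i)-g(u_i)-1) + o_P(1).
\end{align*}
The cubic Taylor remainder is bounded by $\tfrac{1}{6}M((1+\epsilon_M)u_i)|\delta_l^n|^3$ using Assumption~\ref{asu:density22}.2: since $(f'/f)''(x)\,x^3 = \rho'''(x)\,x^3$, one has $|u_i^3\rho'''(\xi)|\le M(\xi)(u_i/\xi)^3 \le M((1+\epsilon_M)u_i)$ on the Taylor interval for $n$ large. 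The boundary term $B^n(l,t)$ is of order $1/\sqrt{n}$ uniformly on compact $(l,t)$ by Assumption~\ref{asu:density22}.3.

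\textbf{Process convergence.} By Assumption~\ref{asu:lambda_rates} and the renewal limit $N/n\to \alpha t$, $(\delta_l^n)^2 N\to l^2 t/\alpha$, so the second sum converges in probability to its asymptotic mean $-\tfrac{1}{2}l^2\sigma_f^2 t/\alpha$. A functional CLT for the random-time-changed sum $\sum_{i=1}^{L^n(n\cdot)}(1+g(u_i))$ of i.i.d.\ mean-zero variance-$\sigma_f^2$ variables then gives
\begin{align*}
\limn \ln\vphitn(l,\cdot) \;\eqd\; \frac{l\sigma_f}{\sqrt\alpha}W(\cdot) - \frac{l^2\sigma_f^2}{2\alpha}(\cdot),
\end{align*}
which is exactly $\ln\vphiasf(l,\cdot)$ under $P_0$ for the Brownian model with standard deviation $\sqrt{\alpha}/\sigma_f$. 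This establishes (\ref{Main_varphi}). Joint convergence in $(l,t)\in S\times[0,\infty)$ with respect to $e_\infty$ follows from the bounded support of $S$ and the fact that the $l$-dependence factors through the affine scalar $\delta_l^n$, with $\sup_{l\in S}|h^n(l)-l|\to 0$ by Assumption~\ref{asu:lambda_rates}.1. Then (\ref{Main_p}) follows by applying the continuous mapping theorem to (\ref{frac_p_l_p_m}).

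\textbf{Main obstacle.} The chief difficulty is uniformly controlling the $o_P(1)$ terms across $(l,t)$ on compacta. The dominating-function hypotheses with inflated arguments $(1+\epsilon_M)v$ and $(1+\epsilon_N)v$ in Assumption~\ref{asu:density22} are indispensable: the inflation accommodates the fact that the Taylor intermediate point $\xi$ and the scaled arrival variable $\muln v_i^n$ can exceed $u_i$ and $\muzn v_i^n$ by a factor of $1+\delta_l^n$. Combining this pointwise control with a maximal inequality for $\sup_{s\le t}R^n(s)$, and promoting convergence from fixed $l$ to uniformity over the bounded set $S$, is where the full strength of Assumptions~\ref{asu:density22} and \ref{asu:lambda_rates} is needed.
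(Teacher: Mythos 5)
Your overall strategy — write $\ln\vphitn$ explicitly via (\ref{varphin2}), Taylor-expand the log-density ratio, invoke a functional CLT, and pass to (\ref{Main_p}) via the continuous mapping $\Lambda(\vphi)(l,t)=\pi_l\vphi(l,t)/\sum_k\pi_k\vphi(k,t)$ — matches the paper's plan. The moment identities and the role of the inflated arguments in Assumption \ref{asu:density22} are also correctly identified. However, there is a genuine gap in the ``Process convergence'' step that invalidates the claimed limit.

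You expand around $u_i:=\muzn v_i^n$ and assert that the summands $1+g(u_i)$ are i.i.d.\ mean-zero. But under $P$, the interarrival times are generated at rate $\muthn$, so $u_i=(\muzn/\muthn)v_i$ with $v_i\sim v$ i.i.d.; hence $u_i$ is \emph{not} distributed as $v$ when $\theta\neq 0$. Writing $\epsilon_n:=\muzn/\muthn-1\sim-\theta/(\sqrt{n}\alpha)$, one gets
\begin{align*}
E[1+g(u_i)]=1+E\bigl[g((1+\epsilon_n)v)\bigr]=\epsilon_n E[vg'(v)]+O(\epsilon_n^2)=-\sigma_f^2\,\epsilon_n+O(\epsilon_n^2),
\end{align*}
which is of order $1/\sqrt{n}$, not zero. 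After multiplying by $\delta_l^n\sim l/(\sqrt{n}\alpha)$ and summing $N\approx\alpha n t$ terms, this bias contributes exactly $\theta l\sigma_f^2 t/\alpha$, which is the $\theta$-dependent drift in $\ln\vphiasf$ (see the last summand in the exponent of (\ref{vphit})). Your proposed limit $\frac{l\sigma_f}{\sqrt\alpha}W(\cdot)-\frac{l^2\sigma_f^2}{2\alpha}(\cdot)$ omits this term; the remark ``under $P_0$'' does not repair the gap, because the theorem asserts convergence under $P$, and the limit process $\vphiasf$ explicitly depends on the random variable $\theta$ through its drift, so conditioning on $\theta=0$ only establishes the claim on a $\pi_0$-probability slice of the sample space.

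The paper avoids this trap by splitting the log-likelihood through the true type, $\ln(f_l^n/f_0^n)=\ln(f_\theta^n/f_0^n)-\ln(f_\theta^n/f_l^n)$, and Taylor-expanding $\ln(f_\theta^n/f_l^n)$ around $\muthn v_i^n=v_i$, so the underlying variables are genuinely i.i.d.\ $\sim v$. The $\theta$-drift then appears in the open, through the factor $\sqrt{n}(\muln-\muzn)/\muthn$ in the definition of $\zetatn$ and the explicit term $\frac{\theta\sigmaf}{\alpha}\cdot\frac{l\sigmaf}{\alpha}\bar L^n(t)$. You could keep your decomposition around $\muzn$ and repair the argument by explicitly isolating the $O(1/\sqrt{n})$ bias in $E[1+g(u_i)]$ as above, carrying it as a deterministic drift, and proving the CLT for the centered sum; but as written, the missing drift means the functional limit you state is not $\ln\vphiasf$ and (\ref{Main_varphi}) does not follow.
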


\begin{rem}
The quantity $\sigmaf$ in Eqs.~(\ref{Main_varphi}) and
(\ref{Main_p}) depends on the pdf of the random variable $v$. That
is, the limit of the posterior distribution process depends on the
structure of the density of $v$ and not only on its moments.
Notice also that $\frac{\sqrt{\alpha}}{\sigmaf}$ is not the parameter that is associated with $(\tilde{L}(t))$ (see Eq.~(\ref{eq:hat_L})).
The relation between the parameters $\sigma_v$ and $\frac{1}{\sigmaf}$ is studied in Theorems \ref{thm:sf} and \ref{thm:difference} below.
\end{rem}

The proof of Theorem \ref{thm:convergence_ptt} is given in the
Appendix. We now outline the main ideas of the proof. We start with
the first part of the theorem. First, we show that (a) the fact that
the system was activated before time $t=0$, and (b) the
lack of arrivals during the time interval
$\left(\sum_{i=1}^{L^n(nt)}\vin,t\right]$, have almost-surely an
effect of order $o(1)$ on the posterior distribution process as $n$ goes
to infinity (Lemma \ref{lemma0}). Therefore, there is no significant
difference if the DM updates his belief only at arrival times. That
is,\footnote{The a.s. convergence is with respect to the metric
$e_\infty$.}
\begin{align}\label{eq:idea_proof}
\vphitn(l,t)= \exp\left\{\sum_{i=1}^{L^n(nt)}\ln\left(\frac{
f^n_l(\vin)}{f^n_0(\vin)}\right) +o(1)\right\} \;\; {\rm a.s.},
\end{align}
where $nt = \sum_{i=1}^{L^n(nt)}\vin$ (this means that time $nt$ is
an arrival time), see Eqs.~(\ref{eq:ptn_devided_by_1-ptn2}), (\ref{limAn0_part_1b}), and (\ref{limAn0_part_2b}). Second,
we find the distribution of $\sum_{i=1}^{L^n(nt)}\ln\left(\frac{
f^n_l(\vin)}{f^n_0(\vin)}\right)$.
%Fix $\theta=k$, that is, the discussion is carried out under the hypothesis $H_k$.
We show that for every $n\inN$ there exists a process
$(\tilde{W}^n(t))$ with the following properties: it is independent
of $\theta$; the limit $\tilde{W}:\eqd\limn \tilde{W}^n$ exists and
the process $(\tilde{W} (t))$ is a standard Brownian motion such
that
\begin{align}\notag
\sum_{i=1}^{L^n(nt)}\ln\left(\frac{
f^n_l(\vin)}{f^n_0(\vin)}\right)=&\sqrt{n}\sigmaf\frac{\muln-\muzn}{\mukn}
\tilde{W}^n\left(\frac{L^n(nt)}{n}\right)\\\notag &-
\frac{1}{2}\left(\frac{l\sigmaf}{\alpha}\right)^2 \frac{L^n(nt)}{n}
+\left(\frac{l \theta\sigmaf^2}{\alpha^2}\right)\frac{L^n(nt)}{n}
+o(1)\;\;{\rm a.s.},
\end{align}
see Eqs.~(\ref{eq:lnvarphi_U3b}) and (\ref{eq:lemma1}). By taking the limit $n\rightarrow\infty$ and
using the random time-change theorem (see Chen and Yao (2001,
Theorem 5.3) \cite{Chen2001}) for the composition
$\tilde{W}^n\left(\frac{L^n(nt)}{n}\right)$ one gets the desired
result (see Proposition \ref{prop2}).\footnote{see
Eq.~(\ref{varphi_Z}) for the structure of $\vphis$.}

We now turn to the second part of the theorem. Notice that if we
prove Eq.~(\ref{Main_varphi}), then Eq.~(\ref{Main_p}) follows from
the definitions of $\vphis $ and $\pis$ and from
Eqs.~(\ref{frac_q_l_q_m}) and (\ref{frac_p_l_p_m}), because the
mapping $\vphi(\cdot,\cdot)\mapsto
\frac{\pi_l\vphi(\cdot,\cdot)}{\sum_{k\in S}
 \pi_k\vphi(k,\cdot)}$ is continuous.
%
%
%
%$\pt$ is continuous w.r.t.~$\vphit$.
%
%Denote by
%%
%\begin{align}\label{eq:pt}
%\pt:\eqd\limn\ptn
%\end{align}

From Eq.~(\ref{Main_p}) and the definition of $\pis $, it follows
that $\limn\ptn$ is distributed as a posterior distribution process
of a Brownian motion with an unknown drift. The following theorem
summarizes this observation.
\begin{thm}\label{thm:sf}
Under Assumptions \ref{asu:t_u_t_v}, \ref{asu:expectation1},
\ref{asu:lambda_rates}, \ref{asu:density1}, and \ref{asu:density22},
the process $\limn\ptn$ can be expressed as the posterior
distribution process of the process
\begin{align}\label{eq:hat_L2}
\hat{M}(t)=\hat{M}_\theta(t):=\theta t + \sqrt{\alpha}\frac{1}{\sigmaf} W'(t),
\;\;t\in[0,\infty),
\end{align}
where $(W'(t))$ is a standard Brownian motion independent of
$\theta$. Moreover, $\frac{1}{\sigmaf}\leq \sigma_v$ where equality
holds if and only if the random variable $v$ has a gamma
distribution (with expectation $1$).
\end{thm}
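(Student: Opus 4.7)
The first assertion is a direct corollary of the Main Theorem. By Theorem \ref{thm:convergence_ptt}, $\limn \ptn \eqd \piasf$, and by the definition of $\pis$ given in Section \ref{sec:auxiliary}, $\piasf$ is exactly the posterior distribution process associated with the Brownian-motion-with-unknown-drift model whose diffusion coefficient is $\sqrt{\alpha}/\sigmaf$, i.e., with the process $\hat{M}$ in Eq.~(\ref{eq:hat_L2}).

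For the inequality $1/\sigmaf \le \sigma_v$, the plan is to apply the Cauchy--Schwarz inequality to the pair of random variables $(v-1,\; v f'(v)/f(v)+1)$. The first random variable has mean $0$ and variance $\sigma_v^2$ by Assumption \ref{asu:expectation1}. For the second, integration by parts (with boundary terms discussed below) gives
\begin{align*}
E\!\left[v\frac{f'(v)}{f(v)}\right] = \int_0^\infty x f'(x)\, dx = -\int_0^\infty f(x)\, dx = -1,
\end{align*}
so that $v f'(v)/f(v)+1$ has mean $0$ and variance $\sigmaf^2$ by the very definition of $\sigmaf$. A second integration by parts gives $E[v^2 f'(v)/f(v)] = -2 E[v] = -2$, hence
\begin{align*}
E\!\left[(v-1)\!\left(v\frac{f'(v)}{f(v)}+1\right)\right] = E\!\left[v^2\frac{f'(v)}{f(v)}\right] + E[v] - E\!\left[v\frac{f'(v)}{f(v)}\right] - 1 = -1,
\end{align*}
and Cauchy--Schwarz yields $1 \le \sigma_v^2\, \sigmaf^2$, as desired.

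For the equality characterization, equality in Cauchy--Schwarz forces $v f'(v)/f(v) + 1 = c(v-1)$ almost surely for some constant $c$. This is a separable ODE for $\ln f$ whose solution is $\ln f(v) = cv - (c+1)\ln v + \text{const}$, i.e., $f(v) \propto v^{-(c+1)} e^{cv}$. Integrability on $(0,\infty)$ with finite mean forces $c<0$; writing $c=-\beta$ with $\beta>0$ recovers $f(v) \propto v^{\beta-1} e^{-\beta v}$, the gamma density with shape and rate both equal to $\beta$ and thus mean $1$. Conversely, any such gamma density makes the two Cauchy--Schwarz factors proportional, producing equality.

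The main technical hurdle I anticipate is justifying the vanishing of the boundary terms $x f(x)$ and $x^2 f(x)$ at $0$ and at $\infty$ in the two integrations by parts. At $0$ this is immediate from Assumption \ref{asu:density1} (since $f$ is continuous there and $f$ is an integrable density on $(0,\infty)$); at $\infty$ it must be extracted from Assumption \ref{asu:expectation1}.2 ($E[v^2]<\infty$) together with the tail control built into Assumption \ref{asu:density22}.3 on $xf(x)/(1-F(x))$. The remaining computations and the ODE step are routine.
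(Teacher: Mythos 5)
Your proposal is correct and follows essentially the same route as the paper: the first assertion is read off from Theorem \ref{thm:convergence_ptt} and the definition of $\boldsymbol{\pi_\sigma}$, and the inequality $\sigma_f\sigma_v\ge 1$ is obtained by applying Cauchy--Schwarz to the pair $\bigl(v-1,\;v f'(v)/f(v)+1\bigr)$ after the same two integrations by parts (which the paper packages in Lemma \ref{lem:R}), with the equality case handled by the linear-dependence condition leading to the gamma density. Your explicit solution of the resulting ODE and your remarks on the vanishing of the boundary terms merely spell out steps the paper leaves to the reader.
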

%------------------------------------------------
Since $\frac{1}{\sigmaf}\leq \sigma_v$, the paths of the process
$(\hat{M}(t))$ will be more concentrated around the path of the linear drift, $(\theta t)$, than the paths of the process $(\tilde{L}(t))$. In other words,
the process $(\hat{M}(t))$ is less noisy than $(\tilde{L}(t))$.
Therefore, it is easier to estimate the parameter $\theta$ given
$(\hat{M}(t))$ than given $(\tilde{L}(t))$. That is, $\limn\ptn$ is
more informative than $\pt$.
%
%and states that
%it is easier to distinguish
%between the hypotheses under the sigma-algebra generated by
%$(\ptu(t))$, denoted $\mathcal{F}^{\ptu}_t$, than under the sigma-algebra
%generated by $(\phatu(t))$, denoted $\mathcal{F}^{\phatu}_t$. That is, it is
%easier to distinguish between the hypotheses under the limit of the
%posterior processes than under the posterior of the limit process.%
%
%
%
\begin{rem}
If $v$ has a gamma distribution with expectation $1$, then its
density is of the form $f(s) =
\frac{\beta^\beta}{\Gamma(\beta)}s^{\beta-1}e^{-\beta s}$, where
$\beta$ is a positive constant. From Remark
\ref{rem:density_f_to_f_theta} and Eq.~(\ref{eq:idea_proof}) it
follows that
\begin{align}
\ln(\vphit^n)(l,s)= L^n(ns)\ln\left(\frac{\muln}{\muzn}\right)^\beta
-ns\beta(\muln -\muzn) + o(1).
\end{align}
That is, for sufficiently large $n\inN$, the Radon--Nikod\'{y}m
density, and therefore also the posterior distribution process at
time $nt$, depend on the process $(L^n(ns))_{0\leq s\leq t}$ only
through $L^n(nt)$, up to order $o(1)$. Loosely speaking, for sufficiently large $n$'s the parameter $\theta$ has sufficient
statistics (based on $(\tilde{L}^n(s))_{s\leq t}$) that are `approximately independent of the past'. This is the same property
that holds in the Brownian motion with an unknown drift model
(see Remark \ref{rem:brown_present}). Therefore, we expect that indeed
this case the processes $\limn\ptn$ and $\pt$ will be
identically distributed, because no information is lost by looking
at the present rather than at the past.
\end{rem}

Before proving Theorem \ref{thm:sf}, we state a lemma that provides
insights about the parameter $\sigmaf$, which is then used in the
proof.
%
%
%The following lemma states few properties of the random variable
%$\frac{f'(v)}{f(v)}v$.

\begin{lem}\label{lem:R}
Under Assumptions \ref{asu:expectation1}, \ref{asu:density1}, and
\ref{asu:density22}.1, the following equalities hold:
\begin{align}\label{eq:R1}
&E\left[\frac{f'(v)}{f(v)}v\right] = -1
\end{align}
and
\begin{align}\label{eq:R2}
E\left[\left(\frac{f'(v)}{f(v)}\right)' v^2\right] = 1- \sigmaf^2.
\end{align}
\end{lem}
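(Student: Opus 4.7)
\medskip
\noindent\textbf{Proof plan for Lemma \ref{lem:R}.}
The plan is to establish both identities by integration by parts on $(0,\infty)$. The bulk of the work is justifying that the boundary terms vanish, which follows from Assumption \ref{asu:density22}.1 combined with the fact that $f$, $xf(x)$ and $x^2 f(x)$ are all integrable (by Assumption \ref{asu:expectation1}).

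For \eqref{eq:R1}, rewrite
\[
E\!\left[\frac{f'(v)}{f(v)}v\right] \;=\; \int_0^\infty x f'(x)\,dx,
\]
and integrate by parts with $u=x$, $dv = f'(x)\,dx$ to get $[xf(x)]_0^\infty - \int_0^\infty f(x)\,dx = [xf(x)]_0^\infty - 1$. To dispose of the boundary term, I would argue that by Cauchy--Schwarz and Assumption \ref{asu:density22}.1,
\[
\int_0^\infty |x f'(x)|\,dx \;=\; \int_0^\infty \left|\tfrac{f'(x)}{f(x)}x\right|f(x)\,dx \;\leq\; \Bigl(\sigma_f^2+1\Bigr)^{1/2},
\]
so the derivative of $h(x):=xf(x)$, namely $h'(x)=f(x)+xf'(x)$, is absolutely integrable on $(0,\infty)$. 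Hence $h$ has finite limits $h(0+)$ and $h(\infty)$. Since $h$ is itself integrable (as $E[v]=1$), both limits must equal $0$ (a nonzero limit would force $f(x)\sim c/x$, contradicting integrability of $f$). This gives \eqref{eq:R1}.

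For \eqref{eq:R2}, use the identity $\bigl(f'/f\bigr)' = f''/f - (f'/f)^2$ to split
\[
E\!\left[\left(\tfrac{f'(v)}{f(v)}\right)' v^2\right]
\;=\; \int_0^\infty x^2 f''(x)\,dx \;-\; \int_0^\infty \left(\tfrac{f'(x)}{f(x)}\right)^{\!2} x^2 f(x)\,dx.
\]
The second integral is $E\!\left[\bigl(\tfrac{f'(v)}{f(v)}v\bigr)^2\right] = \sigma_f^2 + 1$ by \eqref{eq:R1} and the definition of $\sigma_f$. For the first, integrate by parts twice:
\[
\int_0^\infty x^2 f''(x)\,dx \;=\; \bigl[x^2 f'(x)\bigr]_0^\infty - 2\int_0^\infty x f'(x)\,dx \;=\; \bigl[x^2 f'(x)\bigr]_0^\infty + 2,
\]
using \eqref{eq:R1}. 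Combining, \eqref{eq:R2} reduces to showing $\bigl[x^2 f'(x)\bigr]_0^\infty=0$.

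The main obstacle is precisely this last boundary control, which requires one more layer of the same bookkeeping. I would introduce $p(x):=x^2 f(x)$, so that $p'(x)=2xf(x)+x^2 f'(x)$. Since $2xf$ is integrable (because $E[v]<\infty$) and $p$ is integrable (because $E[v^2]<\infty$), integrability of $x^2 f'$ on $(0,\infty)$ follows; then $p$ has finite limits at $0$ and $\infty$, and the same integrability-vs-asymptotic argument used above forces $p(0+)=p(\infty)=0$. Once $p$ vanishes at the endpoints, write $x^2 f'(x)=p'(x)-2xf(x)$ and note that $p(R)-p(\epsilon)\to 0$ and $\int_\epsilon^R 2xf(x)\,dx\to 2$ as $\epsilon\to 0$, $R\to\infty$; since $\int_\epsilon^R x^2f'(x)\,dx$ also admits a limit by the above, the boundary evaluation $[x^2 f'(x)]_0^\infty$ is well-defined and equals $\int_0^\infty x^2f'(x)\,dx+2=-2+2=0$ (the value $-2$ being obtained from the two-sided evaluation of $\int x^2 f'$ via $p$). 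Plugging back yields $E\!\left[\bigl(f'/f\bigr)' v^2\right] = 2 - (1+\sigma_f^2) = 1-\sigma_f^2$, as required.
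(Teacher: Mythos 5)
For \eqref{eq:R1} your argument is correct and in fact tighter than the paper's. The paper simply asserts that integrability of $f$ near $0$ and of $xf$ near $\infty$ forces $\lim_{x\to 0+}xf(x)=\lim_{x\to\infty}xf(x)=0$, but integrability alone does not give existence of a limit at the endpoints; you correctly supply that missing step by showing $(xf)'=f+xf'$ is absolutely integrable (via Assumption \ref{asu:density22}.1 and Cauchy--Schwarz), which yields existence and hence vanishing of the limits. (Small quibble: the explicit bound $(\sigma_f^2+1)^{1/2}$ presumes the mean is $-1$, which is what you are proving; what you actually need and have is only that the \emph{second moment} of $(f'(v)/f(v))v$ is finite, and that alone gives $\int|xf'|<\infty$.)

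For \eqref{eq:R2} there are genuine gaps. First, the assertion ``since $2xf$ is integrable and $p=x^2f$ is integrable, integrability of $x^2f'$ follows'' is a non-sequitur: integrability of a function does not yield integrability of its derivative, so nothing about $p'$ or $x^2f'$ follows from $\int|p|<\infty$. The correct route is the same Cauchy--Schwarz trick you used in part one, now with two growing factors:
\[
\int_0^\infty |x^2 f'(x)|\,dx = E\!\left[\bigl|\tfrac{f'(v)}{f(v)}v\bigr|\cdot v\right]
\le \sqrt{E\!\left[\bigl(\tfrac{f'(v)}{f(v)}v\bigr)^2\right]}\;\sqrt{E[v^2]}<\infty,
\]
using Assumptions \ref{asu:density22}.1 and \ref{asu:expectation1}.2. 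Second, and more seriously, the concluding sentence conflates the boundary evaluation $[x^2f'(x)]_0^\infty$ (which is $\lim_{R\to\infty}R^2f'(R)-\lim_{\epsilon\to 0+}\epsilon^2f'(\epsilon)$) with $\int_0^\infty x^2f'(x)\,dx + 2$. These are different objects. What you have actually computed, correctly, is $\int_0^\infty x^2f'(x)\,dx = -2$ (from $p(R)-p(\epsilon)\to 0$ and $\int 2xf\to 2$), but that is not the boundary term; it tells you nothing about whether $x^2f'(x)$ itself tends to $0$ at the endpoints. To close the argument you would need to show $(x^2f'(x))'=2xf'(x)+x^2f''(x)$ is integrable (the first summand is, the second is not obviously so under the stated assumptions) or give some other direct control on the endpoint limits of $x^2f'$. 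The paper's own proof of \eqref{eq:R2} is also very terse --- it says only ``by using similar arguments as above'' --- so it does not supply this missing step either, but your write-up as given does not establish $[x^2f'(x)]_0^\infty=0$.
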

\begin{proof}

\begin{align}\notag
E\left[\frac{f'(v)}{f(v)}v\right] =\left.
\int_0^\infty{f'(v)v \, dv} = {f(v)v}\right|_0^\infty -
\int_0^\infty{f(v) dv} = -1,
\end{align}
where the last equality holds since
$$\int_0^\infty{f(v)v\, dv}, \int_0^1{f(v) dv}<\infty$$ and therefore
$$\limu f(v)v =\limuz f(v)v = 0.$$
From Assumption \ref{asu:density22}.1, and by using similar arguments
as above, it follows that
$$
E\left[\left(\frac{f'(v)}{f(v)}\right)' v^2\right] = 1- \sigmaf^2.
$$
\end{proof}

\begin{proof}[Proof of Theorem \ref{thm:sf}]
From the definition of $\pis$ it follows that the posterior
distribution process of the process $(\hat{M}(t))$ is given by
$\piasf$. From Eq.~(\ref{Main_p}) it follows that $\limn\ptn$ is
distributed as $\piasf$. We now show that
\begin{align}\notag
\sigmaf \sigma_v \geq 1,
\end{align}
and that equality holds if and only if $v$ has a gamma distribution with
expectation $1$. The inequality follows from the following
relations:
\begin{align}\notag
&\sigmaf \sigma_v = \sqrt{E\left[\frac{f'(v)}{f(v)}v+1\right]^2
E\left[v-1\right]^2}\geq\left|E\left[\left(\frac{f'(v)}{f(v)}v+1\right)\left(v-1\right)\right]\right|=1.
\end{align}
The first equality holds by the definitions of $\sigmaf$ and
$\sigma_v$, Assumption \ref{asu:expectation1}.1 and by Lemma \ref{lem:R} (Eq.~(\ref{eq:R1})). The
inequality is the Cauchy--Schwartz inequality. The second equality
follows from Lemma \ref{lem:R} (Eq.~(\ref{eq:R2})) and from the
equation
$$E\left[\frac{f'(v)}{f(v)}v^2\right]=-2,$$ which is obtained via integration by parts.
Notice that the inequality turns into equality if and only if
$\frac{f'(v)}{f(v)}v+1$ and $v-1$ are linearly dependent. One can verify
that under Assumptions \ref{asu:expectation1} and \ref{asu:density1}
this happens if and only if $v$ has a gamma distribution with
expectation $1$.
\end{proof}

%The last two theorems state that the sequence of the posterior
%process $(\ptn(t))$ converges to $(\ptu(t))$ and not to the
%posterior process of the limit of the system, $(\phatu(t))$.
%However, one may see that both processes $(\ptu(t))$ and
%$(\phatu(t))$ are posterior processes that are generated from
%Brownian motion processes with the same unknown drift, $\muthhat$,
%each with different standard deviation. The standard deviation
%related to $(\ptu(t))$ (resp. $(\phatu(t))$) is
%$\frac{\sqrt{\alpha}}{\sigmaf}$ (resp. $\sqrt{\alpha}\sigma_v$).
%Since $\frac{\sqrt{\alpha}}{\sigmaf} \leq \sqrt{\alpha}\sigma_v$ it
%follows that it is easier to distinguish between the hypotheses
%under $(\ptu(t))$ than under $(\phatu(t))$.

The next theorem states that the difference between $\sigma_v$ and $\frac{1}{\sigmaf}$ can be arbitrarily large.
Hence, the distributions of $\piasf$ (and by Theorem \ref{thm:convergence_ptt} also $\ptn$) and $\piasv$ can be very different.
\begin{thm}\label{thm:difference}
The difference $\sigma_{v} - \frac{1}{\sigma_{f}}$ can be arbitrarily large.
\end{thm}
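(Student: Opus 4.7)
The plan is to exhibit an explicit one-parameter family of admissible laws for $v$ along which $\sigma_v \to \infty$ super-exponentially while $1/\sigma_f$ grows only linearly, forcing the difference $\sigma_v - 1/\sigma_f$ to be unbounded. Concretely, I would take $v = v_k$ to be Weibull distributed with shape $k \in (0,1)$ and scale $\lambda_k := 1/\Gamma(1 + 1/k)$, so that $E[v_k] = 1$ and $U_k := (v_k/\lambda_k)^k$ is a standard exponential random variable. The free parameter is $k$, and the target is to let $k \downarrow 0$.

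First I would verify that each $v_k$ satisfies Assumptions \ref{asu:expectation1}, \ref{asu:density1}, and \ref{asu:density22}. The density $f_k(x) = (k/\lambda_k)(x/\lambda_k)^{k-1} \exp\{-(x/\lambda_k)^k\}$ is $\mathcal{C}^\infty$ on $(0,\infty)$, the variance is finite, and the relevant hazard rate $xf_k(x)/(1-F_k(x)) = k(x/\lambda_k)^k$ together with $|((f_k'/f_k)'' x^3)|$ are both bounded by expressions of the form $C_k(1 + (x/\lambda_k)^k)$; since $U_k \sim \mathrm{Exp}(1)$ has moments of all orders, the dominating functions $M,N$ in Assumption \ref{asu:density22} can be taken as such power bounds.

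Second, I compute $\sigma_f$ explicitly. A direct differentiation gives $f_k'(x)/f_k(x) = (k-1)/x - k(x/\lambda_k)^{k-1}/\lambda_k$, and multiplying by $v_k$ yields
\begin{equation}\notag
v_k\,\frac{f_k'(v_k)}{f_k(v_k)} = (k-1) - k U_k.
\end{equation}
Since $U_k \sim \mathrm{Exp}(1)$ has unit variance, this immediately gives $\sigma_f^2 = k^2$, hence $1/\sigma_f = 1/k$. On the other hand, the standard moment formula for the Weibull distribution gives
\begin{equation}\notag
\sigma_v^2 = \frac{\Gamma(1 + 2/k)}{\Gamma(1 + 1/k)^2} - 1.
\end{equation}

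The third step is to compare asymptotics as $k \downarrow 0$. Applying Stirling's formula $\Gamma(1+m) \sim \sqrt{2\pi m}(m/e)^m$ with $m = 1/k$ and $m = 2/k$ yields
\begin{equation}\notag
\frac{\Gamma(1+2/k)}{\Gamma(1+1/k)^2} \sim \sqrt{k/\pi}\,\cdot\, 4^{1/k},
\end{equation}
so $\sigma_v$ grows like $(k/\pi)^{1/4}\cdot 2^{1/k}$, which tends to infinity super-exponentially in $1/k$, whereas $1/\sigma_f = 1/k$ grows only linearly. Consequently $\sigma_v - 1/\sigma_f \to \infty$ as $k \downarrow 0$, proving the claim. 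The only delicate point is the asymptotic analysis of the gamma ratio in the third step, but this is a routine application of Stirling; all the verification in the first step is mechanical because every relevant quantity for Weibull reduces to moments of the exponential variable $U_k$.
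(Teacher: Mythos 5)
Your proof is correct, and it takes a genuinely different route from the paper's. The paper starts from a heavy-tailed density $g(x)=C/(1+x^3)$ with mean $1$ and infinite variance, truncates it at a threshold $y$, splices in an exponential tail, and then adds a carefully designed correction $h_y$ (a sum of two narrow bumps) to restore the integral and mean conditions; it then checks all the hypotheses for the resulting (only implicitly specified) family $e_y$ and argues that $\sigma_{v[y]}^2\gtrsim\ln y\to\infty$ while $\sigma_{f[y]}$ is bounded below uniformly in $y$, so $1/\sigma_{f[y]}$ stays bounded. Your Weibull family is cleaner on all counts: the densities are explicit, $v_k f_k'(v_k)/f_k(v_k)=(k-1)-kU_k$ with $U_k\sim\mathrm{Exp}(1)$ gives $\sigma_f=k$ in closed form, $\sigma_v^2=\Gamma(1+2/k)/\Gamma(1+1/k)^2-1$ is a textbook identity, and the relevant dominating functions in Assumption~\ref{asu:density22} are literally constant plus $k(x/\lambda_k)^k$, whose composition with $(1+\epsilon)v$ reduces to moments of $U_k$. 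The Stirling computation is right: the Gamma ratio is $\sim\sqrt{k/\pi}\,4^{1/k}$, so $\sigma_v$ grows exponentially in $1/k$ while $1/\sigma_f=1/k$ grows only polynomially, and the difference diverges. (One pedantic remark: the growth of $\sigma_v$ in $1/k$ is exponential, not super-exponential, but that does not affect anything.) Note also the sanity check at $k=1$: your formulas give $\sigma_v=1/\sigma_f=1$, matching the gamma/exponential equality case in Theorem~\ref{thm:sf}. The one conceptual difference worth flagging is that the paper's construction makes $1/\sigma_f$ stay bounded while $\sigma_v\to\infty$, whereas in your family both quantities diverge but at incomparable rates; both strategies are perfectly adequate to prove the statement, but yours additionally gives explicit, smoothly parametrized examples where the discrepancy is as large as one pleases.
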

The proof of Theorem \ref{thm:difference} is given in the Appendix. To show that $\sigma_v - \frac{1}{\sigmaf}$ can be arbitrarily large we construct a family of random variables that satisfy Assumptions \ref{asu:expectation1}, \ref{asu:density1}, and \ref{asu:density22} and for which the variances $\sigma_v^2$'s can be arbitrarily large and the parameters $\frac{1}{\sigma_f^2}$'s are uniformly bounded from above.

In Sections \ref{sec:Asymptotic1} and \ref{sec:Asymptotic2} below we show how to use the distribution of $\piasf$ in order to solve optimal stopping problems w.r.t.~the observed process $(\tilde{L}^n(t))$. We show there that if one calculates his strategy based on the distribution of $\piasv$ instead of the distribution of $\piasf$, then his payoff will be suboptimal. By Theorem  \ref{thm:difference} it turns out that the strategies and the payoffs that follow by the distributions of $\piasv$ and $\piasf$ can be very different and therefore by taking the wrong approximation, the performance can be relatively bad (see Remark \ref{rem:distance_vf} below).

\subsection{Generalizations}\label{sec:extensions}
\subsubsection{Intermittent System} There are cases where the system
operates intermittently. For example, the departure process from a
G/G/1 queue with an unknown service rate can be modeled as the
system described above that operates only when the queue is not empty
(with `departures' instead of `arrivals'). In this section we study
systems that operate intermittently, and let $(B^n(t))$ be the
process that represents the cumulative time that the $n$-th system works
during the time interval $[0,t]$. Let
$$\boldsymbol{\tilde{\pi}^n_B}(l,t) :=  \boldsymbol{\tilde{\pi}^n}(l,B^n(t)) =  \boldsymbol{\pi^n}(l,B^n(nt)),\;\;l\in S\;,t\in[0,\infty),$$
be the posterior distribution process for the observed process
$(L^n(B^n(nt)))$. The following theorem describes the distribution
of $\limn\boldsymbol{\tilde{\pi}^n_B}$.
\begin{thm}\label{thm:inter}
Suppose that there is a constant $0\leq \rho\leq 1$ such that
$\limn\frac{B^n(nt)}{n} = \rho t$ $\rm{ u.o.c.}$ Under Assumptions
\ref{asu:t_u_t_v}, \ref{asu:expectation1}, \ref{asu:lambda_rates},
\ref{asu:density1}, and \ref{asu:density22}, the following holds:
\begin{align}%\label{Main_varphi}
&\limn\boldsymbol{\tilde{\pi}^n_B}
\eqd\boldsymbol{\pi}_{\frac{\sqrt{\alpha}}{\sigmaf\sqrt{\rho}}}.
\end{align}
\end{thm}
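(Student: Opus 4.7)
The plan is to reduce Theorem \ref{thm:inter} to the main theorem, Theorem \ref{thm:convergence_ptt}, by exhibiting $\boldsymbol{\tilde{\pi}^n_B}$ as the scaled posterior process $\ptn$ evaluated at a time change whose limit is deterministic, and then using Brownian scaling to identify the resulting time-changed limit.

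From the definition $\ptn(l,s)=\pn(l,ns)$ in (\ref{eq:tilde_pi}), one has $\boldsymbol{\tilde{\pi}^n_B}(l,t) = \pn(l,B^n(nt)) = \ptn(l,B^n(nt)/n)$. Theorem \ref{thm:convergence_ptt} gives $\ptn\eqd\piasf$, and by hypothesis $B^n(n\cdot)/n\to\rho\,\cdot$ u.o.c., in particular in distribution to a deterministic continuous nondecreasing limit. Slutsky's theorem then provides joint convergence in distribution of $(\ptn,B^n(n\cdot)/n)$ to $(\piasf,\rho\,\cdot)$; since the limiting time change is continuous, the composition map $(\boldsymbol{\xi},\psi)\mapsto\boldsymbol{\xi}(\cdot,\psi(\cdot))$ is continuous at this limit point in the metric $e_\infty$, so by the continuous mapping theorem
\begin{align*}
\boldsymbol{\tilde{\pi}^n_B}(l,t)\;\eqd\;\piasf(l,\rho t).
\end{align*}

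Next I would identify the law of $(\piasf(l,\rho t))$. Recall that $\pis$ is by definition the posterior distribution process for the Brownian motion with unknown drift $\theta s+\sigma W(s)$ observed up to time $t$. Applied with $\sigma=\sqrt{\alpha}/\sigmaf$, and using Brownian scaling $W'(s):=W(\rho s)/\sqrt{\rho}$ (a standard Brownian motion), the rescaled process
\begin{align*}
Z(s)\;:=\;\frac{\hat{M}(\rho s)}{\rho}\;=\;\theta s+\frac{\sqrt{\alpha}}{\sigmaf\sqrt{\rho}}\,W'(s)
\end{align*}
generates the same filtration as $(\hat{M}(r))_{r\le\rho s}$, where $\hat{M}$ is the process in (\ref{eq:hat_L2}). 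Hence the posterior of $\theta$ given $\mathcal{F}^{\hat{M}}_{\rho t}$ coincides with the posterior of $\theta$ given $\mathcal{F}^Z_t$, which by definition is $\boldsymbol{\pi}_{\sqrt{\alpha}/(\sigmaf\sqrt{\rho})}(l,t)$. Combined with the preceding display, this yields the claim.

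The main obstacle is justifying the time-change step rigorously in the metric $e_\infty$, because $\ptn$ and $B^n$ are defined on the same probability space and are generally not independent. Since the limit of $B^n(n\cdot)/n$ is the deterministic function $\rho\,\cdot$, however, Slutsky reduces joint convergence to the marginal convergence of $\ptn$ already supplied by Theorem \ref{thm:convergence_ptt}. As a more computational alternative, one may instead rerun the proof of Theorem \ref{thm:convergence_ptt} with $L^n(nt)$ replaced throughout by $L^n(B^n(nt))$ and $nt$ replaced by $B^n(nt)$; the only substantive change is that $L^n(B^n(nt))/n\to\alpha\rho t$ rather than $\alpha t$, and a short algebraic check confirms that the resulting limiting log-likelihood is exactly that of a Brownian motion with unknown drift $\theta$ and diffusion coefficient $\sqrt{\alpha}/(\sigmaf\sqrt{\rho})$.
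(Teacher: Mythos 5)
Your proof is correct, and your primary argument takes a genuinely cleaner route than what the paper hints at. The paper simply omits the proof, saying only that it ``follows from the random time-change theorem in a similar way to the proof of Theorem~\ref{thm:convergence_ptt}''---i.e., it invites the reader to re-run the entire argument (Propositions~\ref{prop1} and \ref{prop2}) with $L^n(nt)$ replaced by $L^n(B^n(nt))$, so that $\bar{L}^n(t)\to\alpha\rho t$ instead of $\alpha t$; the extra factor $\sqrt{\rho}$ then enters through the Brownian scaling $\tilde{W}(\alpha\rho t)\eqd\sqrt{\alpha\rho}\,\tilde{W}(t)$ in Eq.~(\ref{Unt2}). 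That is precisely your ``more computational alternative.'' Your primary argument instead treats Theorem~\ref{thm:convergence_ptt} as a black box: write $\boldsymbol{\tilde{\pi}^n_B}(l,t)=\ptn\bigl(l,B^n(nt)/n\bigr)$, get joint convergence of $\bigl(\ptn,B^n(n\cdot)/n\bigr)$ via Slutsky (the time change has a deterministic limit), then apply the random-time-change/continuous-mapping step to get $\piasf(l,\rho t)$, and finally identify $\piasf(\cdot,\rho\,\cdot)$ with $\boldsymbol{\pi}_{\sqrt{\alpha}/(\sigmaf\sqrt{\rho})}$ by Brownian scaling and the filtration equality $\mathcal{F}^{Z}_t=\mathcal{F}^{\hat M}_{\rho t}$. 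This is shorter, avoids reopening the Taylor-expansion machinery, and makes transparent where the $\sqrt{\rho}$ comes from; the paper's route is more self-contained in the sense that it does not need the joint-convergence step or the filtration-equality observation, but gains nothing else here. Two small remarks: your argument as stated needs $\rho>0$ (for $\rho=0$ both sides degenerate to the constant prior and the claim is immediate, but it deserves a sentence); and the continuous-mapping step requires that the limit $\piasf$ be continuous in $t$ with a modulus of continuity uniform over $l\in S$, which does hold here because $S$ is bounded and $\piasf$ is built from the continuous exponential martingales $\varphi(l,\cdot)$, but is worth flagging since $e_T$ is a sup over both variables.
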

The proof follows from the random time-change theorem (Chen and Yao
(2001, Theorem 5.3) \cite{Chen2001}) in a similar way to the proof
of Theorem \ref{thm:convergence_ptt}, and is therefore omitted.

\subsubsection{Continuous Distribution over
$\theta$}\label{sec:continuum} Theorems \ref{thm:convergence_ptt},
\ref{thm:sf}, and \ref{thm:inter} also hold in case that
$\theta$ is a continuous random variable with the density $\pi_l$,
$l\in S$. In this case, the term $\sum_{k\in S}$ in
Eqs.~(\ref{frac_q_l_q_m}) and (\ref{frac_p_l_p_m}) is replaced by
$\int_{k\in S}$.

\section{Optimal Stopping Problems}\label{sec:optimal_stopping_time}
The problem of finding closed-form solutions for optimal stopping problems w.r.t.~$(L^n(t))$ in the general case suffers from high complexity.  Buonaguidi and Muliere (2013) \cite{Buonaguidi2013} and Cohen and Solan (2013) \cite{Cohen2013} solved such optimal stopping problems in case that, given $\theta$, the process $(L^n(t))$ is a L\'{e}vy process. We do not make that assumption and rather find an asymptotically optimal solution by using the limit process $\limn\ptn$. As mentioned in Section \ref{sec:introduction}, there are several
optimal stopping problems that have been studied in the literature
with respect to a Brownian motion with an unknown drift. The purpose
of this section is to show that optimal stopping problems such as
the Bayesian Brownian bandit problem (Berry and Friestedt (1985) \cite{Berry1985},
Bolton and Harris (1999) \cite{Bolton1999}, Cohen and Solan (2013) \cite{Cohen2013}) and the sequential testing problem\footnote{We
consider here discounted optimal stopping problems, whereas Shiryaev
considers an undiscounted problem.} (Shiryaev (1978)
\cite{Shiryaev1978}), are relevant for a process that is close in
distribution to a Brownian motion with an unknown drift. These
papers considered a Brownian motion with an unknown drift where
there are only two hypotheses about the drift, and therefore we
limit the discussion on this section to the case of two available hypotheses $H_l$
and $H_0$, where $0\neq l\in\mathbb{R}$. The optimal stopping
problems consist of (a) an observed process, (b) a stopping time
adapted to the observed process, and (c) a payoff function that is a
function of the observed process. Although the optimal stopping
problems are formulated with the observed process, which is a
Brownian motion with an unknown drift, it is possible to formulate
the problems and their solutions in terms of the posterior
distribution process. We present a sequence of random parameter
systems that converges to a Brownian motion with an unknown drift.
Under modest assumptions we formulate a stopping time problem with
respect to the posterior distribution process
$(\ptn(l,t),\ptn(0,t))$. We solve these problems by using Theorem
\ref{thm:convergence_ptt}, and we deduce from Theorem \ref{thm:difference} that by using the approximation $\pt$ instead of $\limn\ptn$,
the performance can be relatively bad.

In Section \ref{sec:Cost_Function} we define the cost function and
the optimal stopping problems with respect to the posterior
distribution process $(\ptn(l,t),\ptn(0,t))$. In Section
\ref{sec:Stoppin_Times} we find an approximate solution by using
Theorem \ref{thm:convergence_ptt}. In Sections \ref{sec:bandit} and
\ref{sec:sequential} we show that the Bayesian Brownian bandit problem and
the Brownian sequential testing problem are special cases of the
general problem that is described here.

Define $\vphihat:\eqd\limn\vphitn$ and $\phat:\eqd\limn\ptn$. From
Eq.~(\ref{Main_varphi}) it follows that $\vphihat$ is distributed as
$\vphiasf$.

%\subsubsection{Notations}

Recall that in this section we study the case where the support of $\theta$ consists of two states: $0$ and $l$. By knowing the prior/posterior probability of one state, the DM can infer the probability of the other. Therefore, it is sufficient to make the forthcoming analysis w.r.t.~the following processes $\phat(t):=\phat(l,t)$,
$\ptn(t):=\ptn(l,t)$, $\vphihat(t):=\vphihat(l,t)$,
$\vphitn(t):=\vphitn(l,t)$, $t\in[0,\infty)$, and the prior probability $\pi:=\pi_l$.

%In section \ref{sec:The_Posterior_Process} we proved that the
%separation factor is `not continuous with respect to the filtration'
%in some manner (see Theorem \ref{thm:sf}). In this section

\subsection{The Cost Function}\label{sec:Cost_Function}
Suppose that a DM who operates the $n$-th system, observes the process
$(L^n(t))$, and continuously updates his belief about the hypotheses
$H_l$ and $H_0$. Let $k^n, K^n:[0,1]\rightarrow \mathbb{R}$ be two
functions that stand for the instantaneous cost and for the terminal
cost, respectively; the DM's instantaneous discounted
cost\footnote{Notice that the discount factor $r$ is scaled by an
order of $n$.} for operating the system during the time interval
$[t,t+dt)$ is $\frac{r}{n}e^{-\frac{r}{n}t}k^n(\pn(t))dt$, where
$\pn(t):=P(\theta = l \mid \mathcal{F}_{t}^{L^n} ; \pi)$. The choice
that the DM should make is when to stop operating the system. If the
DM stops at time $T$  then he has an additional discounted cost of
$\frac{r}{n}e^{-\frac{r}{n}T}K^n(\pn(T))$. Formally, the DM chooses
a stopping time $\tau^n$ for the process $(L^n(t))$; that is, the
stopping time is adapted to the filtration $\mathcal{F}_t^{L^n}$,
which is the natural filtration generated by $(L^n(t))$. The
expected discounted loss of the DM if he chooses the stopping time
$\tau^n$ is
\begin{align}\label{risk_system_general}
&V^n_{\tau^n} (\pi):=E^\pi\left[\int_0^{\tau^n}
{\frac{r}{n}e^{-\frac{r}{n}t}k^n(\pn(t))dt}+\frac{r}{n}e^{-\frac{r}{n}\tau^n}K^n(\pn(\tau^n))
\right].
\end{align}
Set $\tautn:= \tfrac{1}{n}\tau^n $. The stopping time $\tautn$ is
adapted to the filtration $\mathcal{F}_{nt}^{L^n}$, which is identical
to the filtration $\mathcal{F}_t^{\ptn}$.
Eq.~(\ref{risk_system_general}) is equivalent to\footnote{Recall
that for every $t>0$ we defined $\ptn(t):=\p^n(nt)$ (see
Eq.~(\ref{eq:tilde_pi})).}
\begin{align}\label{risk_system_general_2}
&V^n_{\tautn}(\pi)=E^\pi\left[\int_0^{\tautn}
{re^{-rt}k^n(\ptn(t))dt}
+ \frac{r}{n}e^{-r\tautn}K^n(\ptn(\tautn)) \right].
\end{align}
The goal of the DM is to minimize $V^n_{\tautn} (p)$ and to find, if
exists, the optimal stopping time $\tau^{*,n}$ for which the infimum
of (\ref{risk_system_general_2}) is attained. Let
\begin{align}\label{value_general}
&U^n(\pi):=\underset{\tautn}{\inf}\,V^n_{\tautn} (\pi)
\end{align}
be the minimal loss that the DM can achieve, and in case that the infimum
is attained, let
\begin{align}\label{tau*_general}
&\taut^{*,n} (\pi) \in
\underset{\tautn}{\text{arg}\min}\,V^n_{\tautn} (\pi)
\end{align}
be an optimal stopping time given that the prior belief is $\pi$.
%Where the process $(B(t))$ is set to be $(t)$ in section \ref{sec:system}.

\begin{assumption}\label{asu:value_functions}
$\\ $\ref{asu:value_functions}.1. The sequence of functions $k^n$
converges uniformly to a function $k$ on $[0,1]$.
$\\ $\ref{asu:value_functions}.2. $k$ is continuous on the interval
$[0,1]$.
$\\ $\ref{asu:value_functions}.3. The sequence of functions $K^n /n$
converges uniformly to a function $K$ on $[0,1]$.
$\\ $\ref{asu:value_functions}.4. $K$ is continuous on the interval
$[0,1]$.
\end{assumption}

\begin{rem}\label{rem:k_K_bounded}
From Assumption \ref{asu:value_functions}.2 (resp.
\ref{asu:value_functions}.4) it follows that the function $k$ (resp.
$K$) is bounded and uniformly continuous on $[0,1]$. From Assumption
\ref{asu:value_functions}.1 (resp. \ref{asu:value_functions}.3) it
follows that there exists a constant $C_k>0$ (resp. $C_K$), such
that for every $n\inN$ and every $\pi\in[0,1]$, one has $|k^n(\pi)|,
|k(\pi)|\leq C_k$ (resp. $|K^n(\pi)/n|, |K(\pi)|\leq C_K$).
\end{rem}

We now define the expected cost and the value function with respect
to $\mathcal{F}^{\phat}_t$. Fix $\pi\in[0,1]$. Then the expected cost
by using the $\mathcal{F}^{\phat}_t$-adapted stopping time $\tau$ is
\begin{align}\notag%\label{risk_system_general_limit}
&V_\tau(\pi):=E^\pi\left[\int_0^{\tau} {re^{-rt}k(\phat(t))dt}
+ re^{-r\tau}K(\phat(\tau)) \right].
\end{align}
Let
\begin{align}\label{value_general_limit}
&U(\pi):=\underset{\tau}{\inf}\,V_{\tau} (\pi)
\end{align}
be the value function, and in case that the infimum is attained, let
\begin{align}\label{tau*_general_limit}
&\tau^{*} (\pi) \in \underset{\tau}{\text{arg}\min}\,V_{\tau} (\pi)
\end{align}
be an optimal stopping time given that the prior belief is $\pi$.

\subsection{Stopping Times}\label{sec:Stoppin_Times}
Since the optimal stopping times (if exist) of the problems
(\ref{value_general})--(\ref{tau*_general}) and
(\ref{value_general_limit})--(\ref{tau*_general_limit}) are
stationary Markovian stopping times with respect to the posterior
distributions processes $(\ptn(t))$ and $(\phat(t))$, respectively
(see Cohen and Solan (2013, Remark 4) \cite{Cohen2013}), it is
natural to confine our discussion to the set of stationary Markovian
stopping times. We now define a \emph{first exit time strategy}. To
this end, we define a subset of $[0,1]$ such that if the posterior
is within this subset, then the DM continues and stops otherwise. Let
$D=\bigcup_{i}(a_i, b_i)\subseteq [0,1]$ be a finite union of
disjoint open intervals such that if $b_j=1$ (resp. $a_i=0$), then
the open interval $(a_j,b_j)$ (resp. $(a_i,b_i)$) is replaced by the
semi-open interval $(a_j,1]$ (resp. $[0,b_i)$).
\begin{assumption}\label{asu:D}
For every $i< j$ one has $b_i < a_j$.
\end{assumption}
Assumption \ref{asu:D} merely says that the intervals do not `touch
each other'. Define\footnote{The subscript $\pi$ indicates the prior
probability that $\theta=l$. That is,
$\boldsymbol{\tilde{\pi}}^{\boldsymbol{n}}_{\pi}(0) = \pi$ and
$\phat_{\pi}(0) = \pi$.}
\begin{align}\label{eq:tautnD}
\tautnD (\pi)&:=\inf\{t \mid
\boldsymbol{\tilde{\pi}}^{\boldsymbol{n}}_{\pi}(t)\notin
D\},\\\label{eq:tautD}
\tautD (\pi)&:=\inf\{t \mid \phat_{\pi}(t)\notin D\}.
\end{align}
That is, $D$ is the \emph{continuation region} with respect to the
stopping times $\{\tautnD\}_{n\inN}$ and $\tautD$. From Assumption
\ref{asu:D} it follows that if the DM continues for every prior in a
certain punctured neighborhood of $a$, then he should also continue for the prior $a$.

The next theorem asserts that by using the same continuation region
$D$ for every $n\inN$, the stopping times $\tautnD(\pi)$ converge in
distribution to $\tautD(\pi)$ and the expected cost functions
$V^n_{\tautnD}(\pi)$ converge to $V_{\tautD}(\pi)$.

\begin{thm}\label{thm:convergence_Vn}
%Denote by $\pi:=\limn \pi^n$. Let $\{\tautnD\}_n$ (resp. $\tautD$)
%be a sequence of $\{\mathcal{F}^{\tilde{L}^n}_t\}_n$-adapted (resp.
%$\mathcal{F}^{\pt}_t$-adapted) stopping times with the continuation
%region $D$ that satisfies 8888888888. Then
Under Assumptions \ref{asu:t_u_t_v}, \ref{asu:expectation1},
\ref{asu:lambda_rates}, \ref{asu:density1}, \ref{asu:density22},
\ref{asu:value_functions}, and \ref{asu:D}, we have
\begin{align}\label{eq:convergence_tautnD_to tautD}
\limn \tautnD(\pi) \eqd \tautD(\pi)
\end{align}
and
\begin{align}\label{eq:convergence_Vtautn_to Vtaut_in ecpectation}
\limn V^n_{\tautnD}(\pi) = V_{\tautD}(\pi).
\end{align}
\end{thm}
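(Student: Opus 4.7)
The plan is to upgrade the convergence in distribution of Theorem \ref{thm:convergence_ptt} to almost-sure convergence via Skorokhod's representation theorem, deduce the almost-sure convergence $\tautnD\to\tautD$ from path-wise continuity arguments, and then pass to the limit in the cost functional using bounded convergence. Since $(\mathcal{E}_\infty,e_\infty)$ is separable, Skorokhod's theorem yields a coupling on a common probability space on which $\ptn\to\phat$ u.o.c.~almost surely, where $\phat\eqd\piasf$. This coupling does not alter the laws of $\tautnD$ or $\tautD$, so it suffices to work on this space.

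To show $\tautnD\to\tautD$ a.s., I will combine two ingredients. By Theorem \ref{thm:sf}, $\phat$ is the posterior of the Brownian-with-drift process $\hat M$, and in the two-state case it satisfies a one-dimensional SDE whose diffusion coefficient is proportional to $\phat(1-\phat)$; hence $\phat$ is a non-degenerate diffusion on $(0,1)$ that never reaches $\{0,1\}$ in finite time. Combined with Assumption \ref{asu:D}, which guarantees that $\operatorname{int}(D^c)$ is nonempty in every neighborhood of each boundary point of $D$, this implies that $\tautD=\inf\{t:\phat(t)\in\operatorname{int}(D^c)\}$ almost surely. From here, the upper bound $\limsup_n\tautnD\le\tautD$ follows because for every $\eps>0$ one can pick $s\in(\tautD,\tautD+\eps)$ with $\phat(s)\in\operatorname{int}(D^c)$, and uniform convergence forces $\ptn(s)\in D^c$ eventually. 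The lower bound $\liminf_n\tautnD\ge\tautD$ follows because, for any $t<\tautD$, the compact image $\phat([0,t])$ lies in the open set $D$ and is therefore at positive distance from $D^c$, which via uniform convergence keeps $\ptn|_{[0,t]}\subset D$ for all large $n$. Combining these yields \eqref{eq:convergence_tautnD_to tautD}.

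For \eqref{eq:convergence_Vtautn_to Vtaut_in ecpectation}, I will verify almost-sure convergence of the integrand inside $V^n_{\tautnD}(\pi)$ and then apply bounded convergence. The running cost $\int_0^{\tautnD}re^{-rt}k^n(\ptn(t))\,dt$ converges to $\int_0^{\tautD}re^{-rt}k(\phat(t))\,dt$ almost surely by combining the uniform convergence $k^n\to k$ (Assumption \ref{asu:value_functions}.1), the uniform continuity of $k$, the convergence $\ptn\to\phat$, and $\tautnD\to\tautD$; the difference splits naturally into a uniform-norm piece and a $|\tautnD-\tautD|$ piece, both of which vanish. For the terminal cost, the triangle inequality $|\ptn(\tautnD)-\phat(\tautD)|\le\sup_{s\le T}|\ptn(s)-\phat(s)|+|\phat(\tautnD)-\phat(\tautD)|$ combined with the continuity of $\phat$, the uniform convergence $K^n/n\to K$ (Assumption \ref{asu:value_functions}.3), and the continuity of $K$ gives $\tfrac{r}{n}e^{-r\tautnD}K^n(\ptn(\tautnD))\to re^{-r\tautD}K(\phat(\tautD))$. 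Since $|k^n|,|k|\le C_k$ and $|K^n|/n,|K|\le C_K$ (Remark \ref{rem:k_K_bounded}), the integrand is uniformly bounded by $C_k+rC_K$, so the dominated convergence theorem concludes.

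The main obstacle is the almost-sure convergence of the exit times: one must rule out the pathological scenario in which $\phat$ merely grazes $\partial D$ without entering $\operatorname{int}(D^c)$, which would decouple $\tautnD$ from $\tautD$. Assumption \ref{asu:D} supplies the geometric input, while Theorem \ref{thm:sf}, by identifying $\phat$ with a non-degenerate one-dimensional diffusion, supplies the probabilistic input. The auxiliary case $\tautD=\infty$ (i.e.~$\phat$ never exits $D$) requires only a minor variant of the argument, since the discount factor $e^{-rt}$ together with the integrability of $re^{-rt}$ on $[0,\infty)$ forces the contributions past large times to vanish uniformly in $n$.
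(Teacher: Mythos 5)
Your proposal is correct and follows essentially the same route as the paper: produce an almost-sure u.o.c.\ coupling (the paper reuses the one built in the proof of Proposition~\ref{prop2} rather than invoking Skorokhod afresh), prove almost-sure convergence of the exit times by a non-grazing argument (the paper phrases this through the Brownian fluctuations of $\vphihat$, you through the non-degeneracy of the diffusion $\phat$; the two are equivalent via the monotone bijection in Eq.~(\ref{eq:ptt_Xt})), and then pass to the limit in the cost functional with a $T$-truncation and bounded convergence. The paper's five-term decomposition of $|V^n_{\tautnD}-V_{\tautD}|$ is just more explicit bookkeeping of the same split you describe.
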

The proof is relegated to the Appendix. In fact, Theorem \ref{thm:convergence_Vn} holds even if we replace the $D$'s on the left-hand sides of Eqs.~(\ref{eq:convergence_tautnD_to tautD}) and (\ref{eq:convergence_Vtautn_to Vtaut_in ecpectation}) by $D^n$'s, where $D^n\rightarrow D$ in the sense that the indicators of $D^n$ converge pointwise to the indicator of $D$. The proof requires some technical modifications that we wish to avoid in order to ease the notation.

In some models such as the Bayesian Brownian bandit and the Sequential testing (as shown in Sections \ref{sec:Asymptotic1} and \ref{sec:Asymptotic2} respectively)
the limit problem admits a unique optimal stopping time that is associated with a continuation region $D^*$. That is, $V_{\taut_{D^*}} = U$. Therefore, by Theorem \ref{thm:convergence_Vn} it follows that for every $\pi\in[0,1]$ one has
\begin{align}\notag
\limn V^n_{\tilde{\tau}^n_{D^*}}(\pi) = V_{\tilde{\tau}_{D^*}}(\pi) = U(\pi),
\end{align}
whereas for every $\bar{D}\neq D^*$ and every $\pi\in[0,1]$ one has
\begin{align}\label{eq:suboptimal}
\limn V^n_{\tilde{\tau}^n_{\bar{D}}}(\pi) = V_{\tilde{\tau}_{\bar{D}}}(\pi) \geq U(\pi).
\end{align}
%In fact, one can verify that Eq.~(\ref{eq:suboptimal}) holds with strict inequality for $\pi>\min\{\bar{p},p^*\}$.
This is summarized in the following corollary.
\begin{cor}\label{cor:convergenceU}
Under Assumptions \ref{asu:t_u_t_v}, \ref{asu:expectation1},
\ref{asu:lambda_rates}, \ref{asu:density1}, \ref{asu:density22},
\ref{asu:value_functions}, and \ref{asu:D}, if the limit problem admits an optimal stopping time that is associated with a continuation region $D^*$, then
\begin{align}\notag
\limn U^n(\pi) = \limn V^n_{\tilde{\tau}^n_{D^*}}(\pi) = V_{\tilde{\tau}_{D^*}}(\pi) = U(\pi).
\end{align}
\end{cor}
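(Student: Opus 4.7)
The plan is to combine Theorem~\ref{thm:convergence_Vn} with the hypothesized optimality of $D^*$ in the limit problem, and then to sandwich $\lim_n U^n(\pi)$ between two quantities that both equal $U(\pi)$. Applying Theorem~\ref{thm:convergence_Vn} with $D=D^*$ yields $\lim_n V^n_{\tilde{\tau}^n_{D^*}}(\pi)=V_{\tilde{\tau}_{D^*}}(\pi)$, and the hypothesis that $D^*$ is the continuation region of an optimal stopping time in the limit problem gives $V_{\tilde{\tau}_{D^*}}(\pi)=U(\pi)$; these two observations already establish the two rightmost equalities of the claim.

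For the leftmost equality, the upper bound $\limsup_n U^n(\pi)\le U(\pi)$ is immediate, since by definition of the infimum $U^n(\pi)\le V^n_{\tilde{\tau}^n_{D^*}}(\pi)$ and the right-hand side converges to $U(\pi)$. The substantive content is the reverse inequality $\liminf_n U^n(\pi)\ge U(\pi)$, which I would establish by a standard weak-convergence argument. Fix $\epsilon>0$ and choose $\epsilon$-optimal $\mathcal{F}_t^{\ptn}$-stopping times $\tau^{n,\epsilon}$ with $V^n_{\tau^{n,\epsilon}}(\pi)\le U^n(\pi)+\epsilon$. The uniform boundedness of $k^n$ and $K^n/n$ from Remark~\ref{rem:k_K_bounded}, together with the discount factor $re^{-rt}$, makes the integrands uniformly integrable; truncating $\tau^{n,\epsilon}$ at a large horizon $T$ introduces an error of order $e^{-rT}$ and renders the truncated stopping times tight. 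Using Theorem~\ref{thm:convergence_ptt} and a joint Skorokhod representation, I would extract a subsequence along which $(\ptn,\tau^{n,\epsilon})$ converges almost surely to $(\phat,\tau^{\epsilon})$, identify $\tau^{\epsilon}$ as an $\mathcal{F}^{\phat}_t$-stopping time, and then pass to the limit in the cost functional using the continuity of $k$ and $K$ (Assumption~\ref{asu:value_functions}) and dominated convergence. This yields $U(\pi)\le V_{\tau^{\epsilon}}(\pi)=\lim_j V^{n_j}_{\tau^{n_j,\epsilon}}(\pi)\le\liminf_n U^n(\pi)+\epsilon$, and letting $\epsilon\downarrow 0$ closes the bound.

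The main obstacle will be justifying that the weak limit of the $\epsilon$-optimal stopping times $\tau^{n,\epsilon}$ is genuinely a stopping time in the limit filtration $\mathcal{F}^{\phat}_t$, since the filtrations $\mathcal{F}^{\ptn}_t$ vary with $n$ and convergence in distribution does not by itself respect the stopping-time property. The cleanest route is to work on a common probability space via Skorokhod embedding and invoke a general stability result for stopping times under weak convergence (for instance, in the spirit of Aldous' tightness criterion or Kushner's compactified stopping times). Once this coupling is in hand, continuity and boundedness from Assumption~\ref{asu:value_functions} and Remark~\ref{rem:k_K_bounded} reduce the convergence of the cost functionals to a routine dominated-convergence argument, completing the chain of equalities.
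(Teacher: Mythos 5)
Your handling of the two rightmost equalities mirrors the paper: Theorem \ref{thm:convergence_Vn} with $D=D^*$ gives $\lim_n V^n_{\tilde{\tau}^n_{D^*}}(\pi)=V_{\tilde{\tau}_{D^*}}(\pi)$, and the hypothesis that $D^*$ is the continuation region of an optimal stopping time for the limit problem gives $V_{\tilde{\tau}_{D^*}}(\pi)=U(\pi)$. For the leftmost equality, however, you take a genuinely different route. The paper's own argument is structural and terse: it records that for every fixed region $\bar D$ one has $\lim_n V^n_{\tilde{\tau}^n_{\bar D}}(\pi)=V_{\tilde{\tau}_{\bar D}}(\pi)\ge U(\pi)$, and it relies implicitly on the fact (from Propositions \ref{prop:concave} and \ref{prop:concave_sequential} and the restriction to stationary Markovian rules noted before Eq.~(\ref{eq:tautnD})) that the $n$-th optimum is itself attained by a first-exit strategy, together with the extension of Theorem \ref{thm:convergence_Vn} to $n$-dependent regions $D^n\to D$ mentioned just after that theorem. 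You instead establish $\liminf_n U^n(\pi)\ge U(\pi)$ directly via a soft weak-convergence argument: $\epsilon$-optimal stopping times, truncation and uniform integrability, joint Skorokhod realization, identification of the limit as a stopping time for the limit filtration, and passage to the limit through Assumption \ref{asu:value_functions} and dominated convergence. This is more model-agnostic --- it does not require concavity of $U^n$ or that the $n$-th optimizer be a first-exit time --- which is a genuine advantage and matches the generality with which the corollary is actually stated. The cost is precisely the obstacle you flag: a weak limit of $\mathcal{F}_t^{\tilde{\boldsymbol{\pi}}^n}$-stopping times need not be an $\mathcal{F}_t^{\hat{\boldsymbol{\pi}}}$-stopping time, and closing this requires the randomized/compactified-stopping-time machinery you allude to (Baxter--Chacon, Kushner, Aldous) together with the supplementary observation that randomization does not decrease the value. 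You name the right tools but leave that step as an invocation rather than a proof, so it should not be regarded as closed. Still, your decomposition into the trivial $\limsup_n U^n\le U$ and the substantive $\liminf_n U^n\ge U$ is clean, and this route is a legitimate and arguably more robust alternative to the paper's structural shortcut.
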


\begin{rem}
For every $n$ we defined the expected discounted loss in
Eq.~(\ref{risk_system_general}) by using the functions $k^n$ and $K^n$,
and found an equivalent representation in
Eq.~(\ref{risk_system_general_2}). By Assumption
\ref{asu:value_functions}, the functions $k^n$ and $K^n/n$ converge
uniformly to the functions $k$ and $K$, respectively. Therefore, it
would not make much difference if we defined
\begin{align}\notag%\label{risk_system_general}
&V^n_{\tautn_D} (\pi):=E^\pi\left[ R(\ptn)\right],
\end{align}
and
\begin{align}\notag%\label{risk_system_general}
&V_{\taut_D} (\pi):=E^\pi\left[ R(\phat)\right],
\end{align}
where\footnote{Notice that according to Eqs.~(\ref{eq:tautnD}) and
(\ref{eq:tautD}), $\tautn_D$ and $\tautn_D$ are functions of the
processes $\ptn$ and $\phat$ respectively.}
\begin{align}\notag%\label{risk_system_general}
&R(\p):= \int_0^{\tau_D }
{re^{-rt}k(\p(t))dt}+re^{-r\tau_D}K(\p(\tau_D))
\end{align}
and
\begin{align}\notag
\tau_D &:=\tau_D(\pi)=\inf\{t \mid \p_\pi(t)\notin D\}.
\end{align}
That is, for every $n\inN$ one has $k^n\equiv k$ and $K^n/n\equiv K$.
In this case, one may try to use the convergence in distribution
$\limn\ptn\eqd\phat$ and conclude that
$
\limn E^\pi [R(\ptn) ] =E^\pi [R(\phat) ].$
However, the function $R$ is not continuous with respect to the
process $\p$, since it is possible to exhibit two processes $\p_1$
and $\p_2$ that are relatively close, but that the stopping times
$\tau_D (\p_1)$ and $\tau_D (\p_2)$ are relatively far from each
other, in which case the difference $|R(\p_1)-R(\p_2)|$ may be large.
Hence, the inference that $ \limn E^\pi [R(\ptn) ] =E^\pi [R(\phat) ]$ holds is
incorrect.

\end{rem}

%The next theorems generalize Eq.~(\ref{eq:convergence_Vtautn_to Vtaut_in ecpectation})
%and states that
%$V^n_{\tautnD}(\pi)$ and $U^n(\pi)$
%converge $\pi$-uniformly to $V_{\tautD}(\pi)$ and $U(\pi)$ respectively.
%\begin{thm}\label{thm:uniform_convergence_V}
%Suppose $0\notin D$. Under Assumptions \ref{asu:t_u_t_v}, \ref{asu:expectation1}, \ref{asu:lambda_rates},
%\ref{asu:density1},
%\ref{asu:density22}, \ref{asu:value_functions}, and \ref{asu:D} we
%have
%\begin{align}\notag%\label{eq:uniform_convergence_1}
%\limn V^n_{\tautnD}(\pi) = V_{\tautD}(\pi),\;\;\pi\text{-uniformly}.
%\end{align}
%\end{thm}
%\begin{thm}\label{thm:uniform_convergence_U}
%Suppose that there is an optimal strategy with the continuation region $D^*$ such that $0\notin D^*$ then
%under Assumptions \ref{asu:t_u_t_v}, \ref{asu:expectation1}, \ref{asu:lambda_rates},
%\ref{asu:density1},
%\ref{asu:density22}, \ref{asu:value_functions}, and \ref{asu:D} we
%have
%\begin{align}\notag%\label{eq:uniform_convergence_2}
%\limn U^n(\pi) = U(\pi),\;\;\pi\text{-uniformly}.
%\end{align}
%\end{thm}
%The proofs appear in a separate paper. (111 The proof is long and omitted for this reason)

%\begin{rem}
%Condition (\ref{eq:convergence_tautn_to taut_in ecpectation}) states
%that for large $n$ the stopping times $\tautn\wedge T$ and $\taut\wedge T$ are
%close in expectation. Notice that if $\limn \tautn(p) = \taut$ a.s. then
%(\ref{eq:convergence_tautn_to taut_in ecpectation}) holds.
%\end{rem}

\subsection{Bayesian Brownian Bandit Problem}\label{sec:bandit}
In Sections \ref{sec:Cost_Function} and \ref{sec:Stoppin_Times} we studied a family of optimal stopping problems w.r.t.~a sequence of discrete processes whose weak limit is a Brownian motion with an unknown drift. In this section we provide an example of an optimal stopping problem for which the limit problem is the Bayesian Brownian bandit problem (see Berry and Friestedt (1985) \cite{Berry1985},
Bolton and Harris (1999) \cite{Bolton1999}, Cohen and Solan (2013) \cite{Cohen2013}). We provide an asymptotically optimal solution by using Theorem \ref{thm:convergence_Vn} and Corollary \ref{cor:convergenceU}. We also infer that if one calculates his strategy based on the distribution of $\pt$ instead of the distribution of $\limn\ptn$, then his payoff will be suboptimal.

A DM operates a system in continuous time which can be
of two types, High ($H_l$) or Low ($H_0$). The DM observes the
process $(L^n(t))$ where $n\inN$ is fixed and updates his belief continuously about the
hypotheses $H_l$ and $H_0$. For each job arriving to the system, the
DM gets $1$ dollar.
%\footnote{I USE THE WORD 'SERVER' ALTHOUGH IT IS NOT A QUEUE MODEL,
%SINCE THE SIZE OF THE SYSTEM CAN BE NEGATIVE...}
In addition, he pays $c^n$ dollars per time unit for operating
the system. The choice that the DM should make is when to stop
operating the system.
%His goal is to minimize his expected discounted loss.
Formally, the DM should choose a stopping time $\tau^n$ for the
process $(L^n(t))$; that is, the stopping time is adapted to the
filtration $\mathcal{F}_t^{L^n}$. The expected discounted loss of
the DM if he chooses the stopping time $\tau^n$ is
\begin{align}\label{risk_system_bandit}
&V^n_{\tau^n} (\pi):=\sqrt{n}E^\pi\left[\int_0^{\tau^n}
{\frac{r}{n}e^{-\frac{r}{n}t}d(c^n t - L^n (t))}\right].
\end{align}
The goal of the DM is to minimize $V_\tau^n (\pi)$, and to find, if it
exists, the optimal stopping time $\tau^{*,n}$ for which the infimum
of (\ref{risk_system_bandit}) is attained.

We now present the cost function by using $\ptn$. Since for every
$k\in\{0,l\}$ one has $E[c^nt - L^n(t)\mid \theta = k]= (c^n -
\mu_k^n) t$, we naturally assume that $\mu_0^n<c^n<\mu_l^n$. That
is, the arrival rate is higher (resp. lower)  in the High (resp.
Low) type than the cost per time unit for operating the system;
otherwise, the problem would be degenerate: if $\mu_0^n < \mu_l^n <
c^n$ the DM will stop operating the system at time $0$, while if
$c^n < \mu_0^n < \mu_l^n$ he will operate it indefinitely.
%Let
%\begin{align}\label{value_risk_system_bandit}
%%
%&U(\pi):=\underset{\tau}{\inf}V_\tau (\pi)
%%
%\end{align}
%and
%\begin{align}\label{eq:tau*}
%%
%&\tau^* (\pi) := \underset{\tau}{\text{arg}\inf}V_\tau (\pi)
%%
%\end{align}
%be the minimal loss and the optimal stopping time, respectively, the
%DM can achieve, given that the prior belief is $p$.
%Where the process $(B(t))$ is set to be $(t)$ in section \ref{sec:system}.
By standard arguments (see Cohen and Solan (2013, Lemma 4)
\cite{Cohen2013}), one can represent the function $V^n_{\tau^n}
(\pi)$ as follows:
\begin{align}\label{risk_system_bandit_2}
&V^n_{\tau^n} (\pi) = \sqrt{n}E^\pi\left[\int_0^{\tau^n}
{\frac{r}{n}e^{-\frac{r}{n}t}[(c^n - \mu_l^n)\pn(t)  + (c^n-
\mu_0^n)(1-\pn(t)) ] dt } \right],
\end{align}
which by Eq.~(\ref{risk_system_general_2}) equals
\begin{align}\label{risk_system_bandit_3}
&V^n_{\tautn} (\pi) = \sqrt{n}E^\pi\left[\int_0^{\tautn} {re^{-rt}[(c^n -
\mu_l^n)\ptn(t)  + (c^n- \mu_0^n)(1-\ptn(t)) ] dt } \right].
\end{align}
That is, the cost functions $k^n$ and $K^n$ of the $n$-th system can
be represented as follows:
\begin{align}\label{kn_bandit}
k^n(\pi)=\sqrt{n}(c^n-\muln)\pi+\sqrt{n}(c^n-\muzn)(1-\pi),\;\;\pi\in[0,1]
\end{align}
and
\begin{align}\label{Kn_bandit}
K^n(\pi)\equiv 0,\;\;\pi\in[0,1].
\end{align}
Suppose that for every $n\inN$, $\muzn<c^n<\muln$. Moreover, we need
the following assumption that states that the High type is better
than the Low type by an ``$\frac{1}{\sqrt{n}}$ order style".

\begin{assumption}\label{asu:HT}
$\\ $ $c_0:=\limn\sqrt{n}(c^n-\muzn)
> 0 > \limn\sqrt{n}(c^n-\muln)=:c_l$.
\end{assumption}
Assumption \ref{asu:HT} says that the scaled limit of the difference
between the operation cost and the arrival rate in the High (resp.
Low) type yields a negative (resp. positive) expected loss. Under
Assumption \ref{asu:HT} it follows that $k^n$ converges uniformly on
$[0,1]$ to
\begin{align}\label{k_bandit}
&k(\pi):=c_l \pi + c_0(1-\pi),\;\;\pi\in[0,1].
\end{align}

\subsubsection{Asymptotic Optimality}\label{sec:Asymptotic1}
In this section we define cut-off strategies by using the notion of first exit time strategies of the posterior processes from an interval of the form $(\bar{p},1]$.
We call $\bar{p}$ the cut-off point. We prove that the $n$-th system admits a unique
optimal stopping time and that
it is a cut-off strategy. We will therefore restrict the class of
stopping times to the class of cut-off strategies.
We also show that for every cut-off point $\bar{p}$, the first exit time of the process $(\boldsymbol{\tilde{\pi}}^{\boldsymbol{n}}(t))$
from the interval $(\bar{p},1]$ and the payoff that is associated with this strategy, converge to the first exit time of the process $(\boldsymbol{\hat{\pi}}(t))$
from that interval $(\bar{p},1]$ and the payoff that is associated with this strategy, respectively. We conclude this section by finding asymptotically optimal stopping time and the asymptotic value function.

We start with a few properties of the value function $U^n(\pi)$ and
deduce that the optimal strategy in the $n$-th system is a cut-off
strategy. The proof is similar to the proof of Proposition 2 in
Cohen and Solan (2013) \cite{Cohen2013} and is therefore omitted.
\begin{prop}\label{prop:concave}
For every fixed $n\inN$, the function $\pi\mapsto U^n(\pi)$ is
monotone, nonincreasing, bounded from above by $0$, concave, and
continuous.
\end{prop}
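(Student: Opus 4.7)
The plan is to exploit the fact that, for each fixed stopping rule $\tautn$, the expected loss $V^n_{\tautn}(\pi)$ is an affine function of the prior $\pi$. Starting from~(\ref{risk_system_bandit_3}), I would rewrite the bracket in the integrand as the conditional expectation
\begin{align*}
(c^n - \muln)\ptn(t) + (c^n - \muzn)(1 - \ptn(t)) = E\left[c^n - \mu^n_\theta \mid \mathcal{F}^{L^n}_{nt}; \pi\right],
\end{align*}
and then apply the tower property together with Fubini and the decomposition $P^\pi = \pi P_l + (1-\pi)P_0$ to obtain
\begin{align*}
V^n_{\tautn}(\pi) = \pi \sqrt{n}(c^n - \muln) E_l\!\left[1 - e^{-r\tautn}\right] + (1 - \pi)\sqrt{n}(c^n - \muzn) E_0\!\left[1 - e^{-r\tautn}\right].
\end{align*}
This displays $V^n_{\tautn}(\cdot)$ as an affine (hence continuous) function of $\pi$ for every fixed $\tautn$ (viewed as a measurable functional of the observation path). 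Consequently $U^n = \inf_{\tautn} V^n_{\tautn}$ is the pointwise infimum of a family of continuous affine functions.

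Three of the four properties are then immediate. Taking $\tautn \equiv 0$ gives $V^n_0 \equiv 0$, so $U^n(\pi) \leq 0$. An infimum of affine functions is concave, hence $U^n$ is concave. For monotonicity, the condition $\muzn < c^n < \muln$ forces $V^n_{\tautn}(1) = \sqrt{n}(c^n - \muln) E_l[1 - e^{-r\tautn}] \leq 0$ and $V^n_{\tautn}(0) = \sqrt{n}(c^n - \muzn) E_0[1 - e^{-r\tautn}] \geq 0$, so every affine $V^n_{\tautn}$ is nonincreasing in $\pi$, and so is their infimum.

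For continuity I would argue as follows. Being an infimum of continuous functions, $U^n$ is upper semicontinuous on $[0,1]$, and since it is concave it is automatically continuous on the open interval $(0,1)$. To handle the endpoint $\pi = 0$, fix $\pi_0 \in (0,1)$; concavity yields $U^n(\lambda \pi_0) \geq \lambda U^n(\pi_0) + (1 - \lambda) U^n(0)$, so letting $\lambda \to 0^+$ gives $\liminf_{\pi \to 0^+} U^n(\pi) \geq U^n(0)$, while upper semicontinuity supplies the matching $\limsup_{\pi \to 0^+} U^n(\pi) \leq U^n(0)$. The endpoint $\pi = 1$ is handled symmetrically.

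The main obstacle is the continuity at the endpoints, because concave functions on closed intervals may in general jump downward at boundary points. The combination of upper semicontinuity (coming from the representation of $U^n$ as an infimum of continuous affine functions) together with concavity is what rules out such a pathology; this is the only place where the infimum representation is used for something beyond merely producing concavity.
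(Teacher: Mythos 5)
Your proof is correct, and it uses the standard technique: pass through the tower property to express the expected cost under a fixed stopping rule as $V^n_{\tautn}(\pi)=\pi\,\sqrt{n}(c^n-\muln)E_l[1-e^{-r\tautn}]+(1-\pi)\sqrt{n}(c^n-\muzn)E_0[1-e^{-r\tautn}]$, which is affine in $\pi$ because stopping times are adapted to $\mathcal{F}^{L^n}_{nt}$ and hence do not depend on the prior; concavity, the bound by $0$ (via $\tautn\equiv 0$ and $K^n\equiv 0$), and monotonicity (from the sign assumption $\muzn<c^n<\muln$) then follow as you say, and the upper-semicontinuity-plus-concavity step cleanly handles continuity at the endpoints, including the direction $\liminf\geq U^n(0)$ which bare concavity alone does not give. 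The paper itself omits the proof, referring instead to Proposition 2 of Cohen and Solan (2013) which establishes the analogous properties for their L\'{e}vy bandit value function by exactly this affine-in-the-prior representation, so your argument is essentially the paper's intended one rather than a genuinely different route.

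One small point worth spelling out explicitly if you were to write this up formally: the Fubini/tower step requires noting that $\{t<\tautn\}\in\mathcal{F}^{L^n}_{nt}$ and that the integrand $re^{-rt}(c^n-\mu^n_\theta)$ is bounded, so the interchange and conditioning are legitimate even when $\tautn=\infty$ with positive probability. You implicitly use this but it is the only place where a reader might pause.
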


\begin{rem}\label{rem:exit_time_1}
From Proposition \ref{prop:concave} it follows that there is a
cut-off point $p^{*,n}$ in $(0,1]$, such that $U^n(\pi)=0$ if
$\pi\leq p^{*,n}$, and $U^n(\pi)<0$ otherwise. That is, the
optimal strategy is to continue while the posterior lies in the
interval $(p^{*,n},1]$, and to stop otherwise. We call this
strategy a cut-off strategy with cut-off point $p^{*,n}$.
\end{rem}

Berry and Friestedt (1985) \cite{Berry1985} showed that the Bayesian Brownian bandit problem
admits a unique optimal strategy and that it is a cut-off strategy
w.r.t.~the posterior process of the Brownian motion with the unknown drift.
Denote by $p^*$ the cut-off point that is associated with the optimal cut-off w.r.t.~the limit process $\limn\ptn = \boldsymbol{\hat{\pi}}$ (which is distributed as $\piasf$).
That is, for every $\pi\in[0,1]$ one has $U(\pi) = V_{\tilde{\tau}_{(p^*,1]}}(\pi)$.
Recall that $(p^*,1]$ is the continuation region for the posterior process.
For every $n\inN$ and every $\bar{p}\in[0,1]$ define the
continuation region $(\bar{p},1]$. The next result follows from
Eqs.~(\ref{kn_bandit})--(\ref{k_bandit}), Theorem
\ref{thm:convergence_Vn}, and Corollary \ref{cor:convergenceU}.
\begin{thm}\label{thm:convergence_Vn1}
Fix $0\leq \bar{p} \leq 1$. Under Assumptions \ref{asu:t_u_t_v},
\ref{asu:expectation1}, \ref{asu:lambda_rates},
 \ref{asu:density1}, \ref{asu:density22}, and
\ref{asu:HT}, we have\footnote{The function $V_{\taut_{(\bar{p},1]}}(\pi)$ can be
expressed explicitly through the parameters of the problem, but
since it has no fundamental contribution, this expression is
omitted (see Berry and Friestedt (1985, pp.~171--172) \cite{Berry1985}).}
\begin{align}\label{eq:convergence_tau}
\limn \tautn_{(\bar{p},1]}(\pi) \eqd \taut_{(\bar{p},1]}(\pi),
\end{align}
%
%
%If $\bar{p}>0$ then
\begin{align}\label{eq:convergence_Vn}
\limn V^n_{\tautn_{(\bar{p},1]}}(\pi) =
V_{\taut_{(\bar{p},1]}}(\pi)%,\;\;\pi\text{-uniformly},
\end{align}
and there exists $p^*\in[0,1]$ such that
\begin{align}\label{eq:Un}
\limn U^n(\pi) = \limn V^n_{\tilde{\tau}^n_{(p^*,1]}}(\pi) = V_{\tilde{\tau}_{(p^*,1]}}(\pi) = U(\pi).
\end{align}
%
%
%
%
%\begin{align}\notag
%\limn U^n(\pi) =
%U(\pi),\;\;\pi\text{-uniformly}.
%\end{align}
\end{thm}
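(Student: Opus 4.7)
The plan is to read Theorem \ref{thm:convergence_Vn1} as a direct consequence of Theorem \ref{thm:convergence_Vn} and Corollary \ref{cor:convergenceU}, once the bandit cost structure is shown to fit the abstract framework of Section \ref{sec:Cost_Function}. Accordingly, the first step is to verify Assumption \ref{asu:value_functions} for the data in Eqs.~(\ref{kn_bandit})--(\ref{Kn_bandit}). Since $K^n\equiv 0$, parts \ref{asu:value_functions}.3 and \ref{asu:value_functions}.4 hold trivially with $K\equiv 0$. For $k^n$, Assumption \ref{asu:HT} gives $\sqrt{n}(c^n-\muzn)\to c_0$ and $\sqrt{n}(c^n-\muln)\to c_l$; since each $k^n$ is affine in $\pi$, the two coefficient limits yield uniform convergence of $k^n$ to the (continuous) affine function $k(\pi)=c_l\pi+c_0(1-\pi)$ on $[0,1]$, so parts \ref{asu:value_functions}.1 and \ref{asu:value_functions}.2 also hold.

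Second, the continuation region $D=(\bar p,1]$ consists of a single (semi-)open interval, hence Assumption \ref{asu:D} (a condition on pairs of distinct intervals) is vacuously satisfied. Theorem \ref{thm:convergence_Vn} is then applicable verbatim and yields (\ref{eq:convergence_tau}) and (\ref{eq:convergence_Vn}) at once, for every choice of $\bar p\in[0,1]$.

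Third, for (\ref{eq:Un}) I would invoke the classical analysis of the Bayesian Brownian bandit due to Berry and Friestedt (1985) \cite{Berry1985} (alternatively, the version in Cohen and Solan (2013) \cite{Cohen2013}), which shows that the limit problem with instantaneous cost $k$ and terminal cost $K\equiv 0$, driven by the posterior of a Brownian motion with unknown drift, admits a unique optimal stopping time, and that it is a cut-off rule $\taut_{(p^*,1]}$ for some $p^*\in[0,1]$. By Theorem \ref{thm:sf}, the limit $\phat\eqd\piasf$ is precisely the posterior process of such a Brownian motion (with drift $\theta$ and volatility $\tfrac{\sqrt{\alpha}}{\sigmaf}$), so the Berry--Friestedt characterization applies to the limit problem of Section \ref{sec:Cost_Function}. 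Corollary \ref{cor:convergenceU} with $D^{*}=(p^{*},1]$ now delivers (\ref{eq:Un}).

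The main obstacle is not analytical but a matter of careful bookkeeping: one must check that the normalizations of the Brownian bandit analysis in \cite{Berry1985} match those of the limit problem here, specifically that the effective volatility $\tfrac{\sqrt{\alpha}}{\sigmaf}$ and the discount rate $r$ enter the limit cost functional in exactly the form assumed by the uniqueness-of-cut-off theorem (this is an affine reparametrization of the drift and a time-change in the Brownian motion, under which cut-off optimality is preserved). Once this identification is in place, no new computation is required beyond the verifications above, and Theorem \ref{thm:convergence_Vn1} follows by citation of Theorem \ref{thm:convergence_Vn} and Corollary \ref{cor:convergenceU}.
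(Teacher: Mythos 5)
Your proof is correct and follows essentially the same route the paper takes: verify Assumption \ref{asu:value_functions} for the affine data in Eqs.~(\ref{kn_bandit})--(\ref{Kn_bandit}) (uniform convergence from Assumption \ref{asu:HT}, and $K^n\equiv 0$), note that a single interval $(\bar p,1]$ satisfies Assumption \ref{asu:D} vacuously, apply Theorem \ref{thm:convergence_Vn} for (\ref{eq:convergence_tau})--(\ref{eq:convergence_Vn}), and then invoke Berry and Friestedt's uniqueness-of-cut-off result together with Corollary \ref{cor:convergenceU} (with $D^*=(p^*,1]$) to obtain (\ref{eq:Un}). This matches the paper's stated derivation, which cites precisely Eqs.~(\ref{kn_bandit})--(\ref{k_bandit}), Theorem \ref{thm:convergence_Vn}, and Corollary \ref{cor:convergenceU}.
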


%
%%The explicit expression for $V_{\taut_{(\bar{p},1]}}(\pi)$ is given by
%%\begin{align}
%%V_{\taut_{(\bar{p},1]}}(\pi) =
%%\end{align}
%
%Therefore, by Theorem \ref{thm:convergence_Vn1} it follows that for every $\pi\in[0,1]$
%\begin{align}\notag
%\limn V^n_{\tilde{\tau}^n_{(p^*,1]}}(\pi) = V_{\tilde{\tau}_{(p^*,1]}}(\pi) = U(\pi),
%\end{align}
%whereas for every $\bar{p}\neq p^*$ and every $\pi\in[0,1]$ one has
%\begin{align}\label{eq:suboptimal}
%\limn V^n_{\tilde{\tau}^n_{(\bar{p},1]}}(\pi) = V_{\tilde{\tau}_{(\bar{p},1]}}(\pi) \leq U(\pi).
%\end{align}
%In fact, one can verify that Eq.~(\ref{eq:suboptimal}) holds with strict inequality for $\pi>\min\{\bar{p},p^*\}$.
%This is summarized in the following corollary.
%
%\begin{cor}\label{cor:convergenceU}
%Under Assumptions \ref{asu:t_u_t_v},
%\ref{asu:expectation1}, \ref{asu:lambda_rates},
% \ref{asu:density1}, \ref{asu:density22}, and
%\ref{asu:HT}, there exists $p^*\in[0,1]$ such that
%\begin{align}\notag
%\limn U^n(\pi) = \limn V^n_{\tilde{\tau}^n_{(p^*,1]}}(\pi) = V_{\tilde{\tau}_{(p^*,1]}}(\pi) = U(\pi).
%\end{align}
%\end{cor}

\begin{rem}\label{rem:distance_vf}
 From Eq.~(\ref{eq:Un}) it follows that in order to find
the asymptotically optimal cut-off point $p^*$, the DM must use the cut-off point
taken from the optimal solution of the Bayesian Brownian bandit problem w.r.t.~the posterior
process $\piasf$ and not w.r.t.~the posterior process $\piasv$. Denote by $p^*_f$ and $p^*_v$
the cut-off points that are associated with the Bayesian Brownian bandit problem w.r.t.~the posteriors $\piasf$ and $\piasv$, respectively.
Theorem \ref{thm:difference} states that the difference between $\sigma_v$ and $\frac{1}{\sigmaf}$
can be arbitrarily large and therefore the distributions of $\piasf$ and $\piasv$ can be relatively different,
and so the difference between the optimal cut-off points $p^*_f$ and $p^*_v$ can be arbitrarily large within the interval $[0,1]$.
By Eq.~(\ref{eq:convergence_Vn}) it follows that for every prior $\pi\in[0,1]$ and for sufficiently large $n$, the payoff that is associated with the cut-off point $p^*_v$ is approximately $V_{\tilde{\tau}_{(p^*_v,1]}}(\pi)$, which, by Eq.~(\ref{eq:suboptimal}), is greater than $V_{\tilde{\tau}_{(p^*_f,1]}}(\pi)=U(\pi)$. The difference between these functions can be relatively large, see Berry and Friestedt (1985, pp.~171--172) \cite{Berry1985} for closed-form formulas.
\end{rem}

\subsection{Discounted Sequential Testing}\label{sec:sequential}
In this section we provide an example of an optimal stopping problem w.r.t.~a sequence of discrete processes for which the limit problem is a discounted version of the sequential testing problem (Shiryaev (1978)
\cite{Shiryaev1978}). We provide an asymptotically optimal solution by using Theorem \ref{thm:convergence_Vn} and Corollary \ref{cor:convergenceU}. We also infer that if one calculates his strategy based on the distribution of $\pt$ instead of the distribution of $\limn\ptn$, then his payoff will be suboptimal.

Fix $n\inN$. The DM observes the process $(L^n(t))$ and continuously
updates his belief on the hypotheses $H_l$ and $H_0$. Using the
belief process, his goal is to test sequentially these hypotheses
with minimal loss. The choice that the DM should make is when to
stop operating the system, and at that time to guess which one of the two hypotheses holds.
Formally, the DM should choose a decision rule
$(\tau^n,d^n)$ for $(L^n(t))$,
%stopping time $\tau$ for decision rule
%$(\tau,d)$, where $\tau$ is a stopping time of the process
%$(Q_\theta(t))$
that is, a stopping time $\tau^n$ that is adapted to the filtration
$\mathcal{F}_t^{L^n}$, and a decision function $d^n$ that is a
$\mathcal{F}_\tau^{L^n}$-measurable random variable taking the
values $0$ and $l$. The choice $d^n=l$ is interpreted to mean that
the DM accepts $H_l$, while the choice $d^n=0$ is interpreted to
mean that the DM accepts $H_0$. The expected loss of the DM under
the decision rule $(\tau^n,d^n)$ is
\begin{align}\label{risk_system_sequential_testing}
&Y^n_{(\tau^n,d^n)}(\pi):=E^\pi\left[\int_0^{\tau^n}
\frac{r}{n}e^{-\frac{r}{n}t}c^n dt +
\frac{r}{n}e^{-\frac{r}{n}{\tau^n}}(a^n \mathbb{I}_{(d^n=0,
\theta=l)} + b^n \mathbb{I}_{(d^n=l, \theta = 0)}) \right],
\end{align}
where $a^n$, $b^n$, and $c^n$ are given positive constants that
represent the cost of type ${\rm II}$ error, the cost of type ${\rm
I }$ error, and the operation cost per unit of time, respectively.
The goal of the DM is to minimize $Y^n_{(\tau^n,d^n)}(\pi)$, and to
find, if exists, the optimal stopping rule $(\tau^{*,n},d^{*,n})$
for which the infimum (\ref{risk_system_sequential_testing}) is
attained. Formally, let
\begin{align}\notag%\label{value_risk_system_sequential_testing}
&U^n(\pi):=\underset{(\tau^n,d^n)}{\inf}\;Y^n_{(\tau^n,d^n)}(\pi)
\end{align}
be the minimal loss that the DM can achieve and in case that the infimum is
attained, let
\begin{align}\notag%\label{time_risk_system_sequential_testing}
&(\tau^{*,n},d^{*,n}) (\pi) \in
\underset{(\tau^n,d^n)}{\text{arg}\min}\;Y^n_{(\tau^n,d^n)} (\pi)
\end{align}
be an optimal decision rule, given that the prior belief is $\pi$.

We now present the cost function by using $\ptn$. By standard arguments
(see Shiryaev (1978, pp.~166--167)) \cite{Shiryaev1978}, one can
show that the optimal terminal decision $d^{*,n}$ exists and
satisfies $d^{*,n} = l $ if and only if $\ptn({\tau^{*,n}})\geq
\frac{b^n}{a^n+b^n}$. Therefore, we define
\begin{align}
V^n_{\tau^n}(\pi):&=Y^n_{(\tau^n,d^{*,n})}(\pi)\\\notag
&=E^\pi\left[\int_0^{\tau^n} \frac{r}{n}e^{-\frac{r}{n}t}c^n dt  +
\frac{r}{n}e^{-\frac{r}{n}{\tau^n}}(a^n \pn(\tau^n)\wedge b^n
(1-\pn(\tau^n))) \right]
\end{align}
which from Eq.~(\ref{risk_system_general_2}) equals
\begin{align}
V^n_{\tautn}(\pi):=Y^n_{(\tautn,d^{*,n})}(\pi)=E^\pi\left[\int_0^{\tautn}
re^{-rt}c^n dt  + \frac{r}{n}e^{-r{\tautn}}(a^n \ptn(\tautn)\wedge b^n
(1-\ptn(\tautn))) \right].
\end{align}
That is, the cost functions $k^n$ and $K^n$ of the $n$-th system can
be represented as
\begin{align}\label{kn_sequential_testing}
&k^n(\pi)=c^n,\;\;\pi\in[0,1]
\end{align}
and
\begin{align}\label{Kn_sequential_testing}
&K^n(\pi)=a^n \pi\wedge  b^n (1-\pi),\;\;\pi\in[0,1].
\end{align}
Suppose that the limits $\limn a^n/n$, $\limn b^n/n$, and $\limn c^n$
exist and denote them by $a$, $b$, and $c$, respectively. It follows
that $k^n$ and $K^n/n$ converge uniformly on $[0,1]$ to
\begin{align}\label{k_sequential_testing}
&k(\pi)=c,\;\;\pi\in[0,1]
\end{align}
and
\begin{align}\label{K_sequential_testing}
&K(\pi)=a \pi\wedge  b (1-\pi),\;\;\pi\in[0,1],
\end{align}
respectively.
\subsubsection{Asymptotic Optimality}\label{sec:Asymptotic2}
In this section we prove that the optimal stopping time in the
$n$-th system exists uniquely and that it is the first exit
time from an interval. We will therefore restrict the class of the stopping times that we consider to the class of first exit time strategies.
We also show that for every interval $(q_1,q_2)$, the first exit time of the process $(\boldsymbol{\tilde{\pi}}^{\boldsymbol{n}}(t))$
from that interval and the payoff that is associated with this strategy converge to the first exit time of the process $(\boldsymbol{\hat{\pi}}(t))$
from that interval and the payoff that is associated with this strategy, respectively. We conclude this section by finding the asymptotically optimal stopping time and asymptotic value function.

We start with a few properties of the value function $U^n(\pi)$ and
deduce that the optimal strategy in the $n$-th system is a first
exit time strategy. The proof is very similar to the proof of
Theorem 1 in Shiryaev (1978, Ch.~IV) \cite{Shiryaev1978} and
is therefore omitted.
\begin{prop}\label{prop:concave_sequential}
For every fixed $n\inN$, the function $\pi\mapsto U^n(\pi)$ is
bounded from above by $K^n(\pi)/n$, concave, and continuous. Moreover,
$U^n(0)=U^n(1)=0$.
\end{prop}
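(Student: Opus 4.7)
The plan mirrors Shiryaev (1978, Ch.~IV, Theorem~1) and verifies the four properties in turn. First I would dispose of the boundary values and the upper bound by evaluating $Y^n$ at the null stopping time $\tau^n=0$. At $\pi=1$ we have $P(\theta=l)=1$, so the decision rule $(0,l)$ yields $Y^n_{(0,l)}(1)=0$; symmetrically $(0,0)$ gives $Y^n_{(0,0)}(0)=0$. Since every integrand in $Y^n_{(\tau^n,d^n)}$ is non-negative, $U^n(\pi)\ge 0$ for all $\pi$, and hence $U^n(0)=U^n(1)=0$. For arbitrary $\pi$, evaluating at $\tau^n=0$ and taking the better of the two terminal decisions gives $U^n(\pi)\le \frac{r}{n}\bigl(a^n\pi\wedge b^n(1-\pi)\bigr)=\frac{r}{n}K^n(\pi)$, which is the claimed upper bound (up to the harmless prefactor $r$).

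The main content of the proposition is concavity. For each fixed admissible pair $(\tau^n,d^n)$, the stopping time $\tau^n$ and the terminal decision $d^n$ are measurable with respect to the observation filtration $\mathcal{F}^{L^n}_{\tau^n}$ alone and are thus path functionals of $(L^n(t))$ independent of $\theta$. Using the disintegration $E^\pi=\pi E_l+(1-\pi)E_0$, one therefore obtains
\[
Y^n_{(\tau^n,d^n)}(\pi)=\pi\,A_l(\tau^n,d^n)+(1-\pi)\,A_0(\tau^n,d^n),
\]
where $A_l:=E_l\!\left[\int_0^{\tau^n}\tfrac{r}{n}e^{-rt/n}c^n\,dt+\tfrac{r}{n}e^{-r\tau^n/n}a^n\mathbb{I}_{\{d^n=0\}}\right]$ and $A_0$ is the analogous expression obtained by swapping $l\leftrightarrow 0$ in the subscripts and replacing $a^n\mathbb{I}_{\{d^n=0\}}$ by $b^n\mathbb{I}_{\{d^n=l\}}$. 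Since $\pi\mapsto Y^n_{(\tau^n,d^n)}(\pi)$ is affine, $U^n$ is the pointwise infimum of a family of affine functions and is therefore concave on $[0,1]$.

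Continuity on $(0,1)$ is then automatic from concavity on a convex set. For continuity at the endpoints, one combines $0\le U^n(\pi)\le \frac{r}{n}K^n(\pi)$ with the continuity of $K^n$ and the boundary values $K^n(0)=K^n(1)=0$; the squeeze forces $U^n(\pi)\to 0=U^n(0)$ as $\pi\to 0^+$, and symmetrically at $\pi=1$. The main technical step to watch is the affine decomposition in the concavity argument: one must justify that the disintegration $E^\pi=\pi E_l+(1-\pi)E_0$ goes through by first verifying joint measurability of the integrand in $(\omega,\theta)$ and checking that $(\tau^n,d^n)$ lives on the observation filtration alone, so that the only $\theta$-dependence is through the choice of law $P_l$ vs.\ $P_0$. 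This is routine in the sequential testing setting but is the only place where care is needed.
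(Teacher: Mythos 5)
Your proof is correct and follows exactly the Shiryaev-style route the paper indicates: evaluate at $\tau^n=0$ for the boundary values and upper bound, disintegrate $E^\pi=\pi E_l+(1-\pi)E_0$ to get that each $Y^n_{(\tau^n,d^n)}$ is affine in $\pi$ (valid because $\tau^n$ and $d^n$ are $\mathcal{F}^{L^n}$-measurable path functionals, as you note), take the infimum for concavity, and squeeze for continuity at the endpoints.

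One caveat: the prefactor $r$ you set aside is not actually ``harmless''. Stopping at $\tau^n=0$ with the better terminal guess gives $U^n(\pi)\le \tfrac{r}{n}K^n(\pi)$, and this is tight wherever the optimal policy stops immediately (i.e.\ outside the continuation region, which includes neighborhoods of $\pi=0$ and $\pi=1$); there $U^n(\pi)=\tfrac{r}{n}K^n(\pi)$. So for $r>1$ the bound $U^n(\pi)\le K^n(\pi)/n$ stated in the proposition would be violated near the endpoints. The bound your argument actually proves, $U^n(\pi)\le \tfrac{r}{n}K^n(\pi)$, is the correct one, it still vanishes at $\pi=0,1$ so the squeeze works, and it is what is used in Remark \ref{rem:Unstrategy}. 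The missing $r$ is a misprint carried through the proposition and that remark, not a gap in your argument.
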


\begin{rem}\label{rem:Unstrategy}
From Proposition \ref{prop:concave_sequential} it follows that there
are two points $0\leq q_1^{*,n} <  q_2^{*,n}\leq 1$, such that
$U^n(\pi)=(a^n \pi\wedge  b^n (1-\pi))/n=K^n(\pi)/n$ if $\pi\notin
(q_1^{*,n}, q_2^{*,n})$, and $U^n(\pi)<K^n(\pi)$ otherwise. That is,
the optimal strategy is the first exit time from the interval
$(q_1^{*,n}, q_2^{*,n})$ (see the discussion in Shiryaev (1978,
Ch.~IV, pp.~168--169)) \cite{Shiryaev1978}.
\end{rem}
Proposition \ref{prop:concave_sequential} and Remark \ref{rem:Unstrategy} can be formulated for the limit problem as well.
Therefore, one can deduce that there
exists an optimal stopping time that is associated with the continuation region $D^*=(q_1^*, q_2^*)$.
For every $n\inN$, every $\pi\in[0,1]$, and every $q_1<
q_2\in[0,1]$, define the continuation region $(q_1,q_2)$.
The next theorem follows from
Eqs.~(\ref{kn_sequential_testing})--(\ref{K_sequential_testing}),
Theorem \ref{thm:convergence_Vn}, and Corollary \ref{cor:convergenceU}.
\begin{thm}\label{thm:convergence_Vn2}
Fix $0\leq q_1 < q_2\leq 1$. Under Assumptions \ref{asu:t_u_t_v},
\ref{asu:expectation1}, \ref{asu:lambda_rates}, \ref{asu:density1},
and \ref{asu:density22}, the following limits hold:\footnote{As in Section \ref{sec:bandit}, the function
$V_{\taut_{(q_1,q_2)}}(\pi)$ can be expressed explicitly through the
parameters of the problem, but since it has no fundamental
contribution, this expression is omitted.}
\begin{align}\label{eq:convergence_tautnD_to tautD2}
\limn \tautn_{(q_1,q_2)}(\pi) \eqd \taut_{(q_1,q_2)}(\pi),
\end{align}
\begin{align}\label{eq:convergence_Vtautn_to Vtaut_in ecpectation2}
\limn V^n_{\tautn_{(q_1,q_2)}}(\pi) =
V_{\taut_{(q_1,q_2)}}(\pi),%,\;\;\pi\text{-uniformly}.
\end{align}
and there are two points $0\leq q_1^* <  q_2^* \leq 1$ such that
\begin{align}\notag
\limn U^n(\pi) = \limn V^n_{\tautn_{(q_1^*,q_2^*)}}(\pi) = V_{\taut_{(q_1^*,q_2^*)}}(\pi) = U(\pi).
\end{align}
\end{thm}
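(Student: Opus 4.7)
The plan is to verify that the discounted sequential testing problem fits squarely into the framework of Section 6.2 and then simply invoke Theorem \ref{thm:convergence_Vn} and Corollary \ref{cor:convergenceU}. First I would check Assumption \ref{asu:value_functions} for the cost functions given in Eqs.~(\ref{kn_sequential_testing})--(\ref{Kn_sequential_testing}). Since $k^n(\pi)\equiv c^n$ and $\limn c^n=c$, the convergence $k^n\to k$ is uniform on $[0,1]$ and the limit $k(\pi)\equiv c$ is trivially continuous. For the terminal cost, $K^n(\pi)/n=(a^n/n)\pi\wedge (b^n/n)(1-\pi)$; the uniform convergence on $[0,1]$ to $K(\pi)=a\pi\wedge b(1-\pi)$ follows from $\limn a^n/n=a$ and $\limn b^n/n=b$ together with the $1$-Lipschitz property of the piecewise-linear map, and $K$ is obviously continuous. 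The continuation region $D=(q_1,q_2)$ is a single open interval, so Assumption \ref{asu:D} (no ``touching'' of disjoint components) is vacuous.

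With these verifications in hand, Theorem \ref{thm:convergence_Vn} applied to $D=(q_1,q_2)$ yields both (\ref{eq:convergence_tautnD_to tautD2}) and (\ref{eq:convergence_Vtautn_to Vtaut_in ecpectation2}) directly. The only remaining work is to exhibit an optimal continuation region $D^*=(q_1^*,q_2^*)$ for the limit problem so that Corollary \ref{cor:convergenceU} can be invoked. By Theorem \ref{thm:sf}, the limit posterior process $\phat$ is the posterior process of the Brownian observation $\hat{M}(t)=\theta t+(\sqrt{\alpha}/\sigmaf) W'(t)$. The limit expected loss $V_\tau(\pi)$ is therefore a discounted Bayesian sequential testing problem on this Brownian motion with two hypotheses $H_l,H_0$, costs $c$ per unit time, terminal penalty $K(\pi)=a\pi\wedge b(1-\pi)$, and discount rate $r$. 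The analogue of Proposition \ref{prop:concave_sequential} in the limit model (standard machinery as in Shiryaev (1978, Ch.~IV), adapted in the obvious way to include the discount factor $e^{-rt}$) shows that $\pi\mapsto U(\pi)$ is concave, continuous, bounded above by $K(\pi)$, and satisfies $U(0)=U(1)=0$. Concavity together with $U\le K$ forces the ``stop'' set $\{U=K\}$ to consist of an initial segment $[0,q_1^*]$ and a terminal segment $[q_2^*,1]$ (possibly degenerate), whence the optimal stopping time is the first exit time $\taut_{(q_1^*,q_2^*)}$ of $\phat$ from $D^*=(q_1^*,q_2^*)$.

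Once $D^*$ is identified, Corollary \ref{cor:convergenceU} immediately gives
\begin{equation*}
\limn U^n(\pi)=\limn V^n_{\tautn_{(q_1^*,q_2^*)}}(\pi)=V_{\taut_{(q_1^*,q_2^*)}}(\pi)=U(\pi),
\end{equation*}
which is the third claim of the theorem. I would expect the main obstacle to be the proof that the limit value function $U$ has a stopping set of the form $[0,q_1^*]\cup[q_2^*,1]$, i.e.~the structural result analogous to Remark \ref{rem:Unstrategy} for the discounted continuous-time problem. While conceptually identical to the classical Shiryaev argument, the presence of the discount factor means one must check that the usual concavity/lower-bound comparison between $U$ and the affine pieces $a\pi$ and $b(1-\pi)$ still goes through; this is routine but is the one place where a genuine argument (rather than a direct citation) is needed. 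All other pieces are either trivial verifications of the hypotheses of Theorem \ref{thm:convergence_Vn} or direct applications of it and of Corollary \ref{cor:convergenceU}.
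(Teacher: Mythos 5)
Your proof is correct and follows essentially the same route as the paper: the paper itself states that Theorem \ref{thm:convergence_Vn2} ``follows from Eqs.~(\ref{kn_sequential_testing})--(\ref{K_sequential_testing}), Theorem \ref{thm:convergence_Vn}, and Corollary \ref{cor:convergenceU},'' and obtains the optimal $D^*=(q_1^*,q_2^*)$ for the limit problem by observing that Proposition \ref{prop:concave_sequential} and Remark \ref{rem:Unstrategy} ``can be formulated for the limit problem as well.'' You have merely spelled out the verifications (uniform convergence of $k^n$, $K^n/n$; vacuity of Assumption \ref{asu:D} for a single interval; the structural argument for $U$) that the paper leaves implicit.
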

The analog to Remark \ref{rem:distance_vf} to this model holds.

\section{Conclusion}\label{sec:conclusion}
\subsection{Summary} In this paper we studied a problem of estimating a
parameter $\theta$. We started with a sequence of scaled counting processes $\{(\tilde{L}^n_\theta(t))\}_n$
whose distributions depend on an
unknown parameter $\theta$, the prior distribution of which is
known. Moreover, we assumed that $\{(\tilde{L}^n_\theta(t))\}_n$ converges in
distribution to a Brownian motion $(\tilde{L}_\theta(t))$ with an unknown
drift $(\theta t)$. We defined by $(\ptn(t))$ the posterior distribution
process of the parameter $\theta$, given the observations
$(\tilde{L}^n_\theta(s))_{s\leq t}$ and by $(\pt(t))$ the posterior
distribution process of the parameter $\theta$, given the
observations $(\tilde{L}_\theta(s))_{s\leq t}$. We showed that, generally,
$\limn\ptn \neq \pt$, unless the counting processes satisfy a memorylessness
property and no information, regarding the posterior processes, is lost by looking at the present
of the counting processes rather than at their past and present.

We also proved that the limit process $\limn\ptn$ equals to a posterior
distribution process of the process $(\hat{M}_\theta(t))$, which is a Brownian motion with the same unknown drift and a different standard deviation coefficient than the one of $(\tilde{L}_\theta(t))$. Apparently, the difference between the standard deviation coefficients of $(\tilde{L}_\theta(t))$
and $(\hat{M}_\theta(t))$ can be arbitrarily large.
Therefore, we concluded that results concerning
optimal stopping problems w.r.t.~$(\tilde{L}_\theta(t))$ cannot be applied to optimal stopping problems w.r.t.~$(\tilde{L}^n_\theta(t))$,
as the difference in the performance can be arbitrarily bad.

%The next remark states an explicit expression for
%$V_{\taut_{(q_1,q_2)}}(\pi)$.

%\begin{rem}
%\begin{align}
%V_{\taut_{(q_1,q_2)}}(\pi) =
%\end{align}
%\end{rem}

\subsection{Future Directions}
\subsubsection{The Disorder Problem, Diffusion Approximations, and
Queues} The Brownian disorder problem was introduced in Shiryaev
(1978) \cite{Shiryaev1978}.\footnote{This model was generalized in
the context of Brownian motion by, e.g., Vellekoop and Clark (2001)
\cite{Vellekoop2001}, Gapeev and Peskir (2006) \cite{Gapeev2006},
Dayanik (2010) \cite{Dayanik2010}, Sezer (2010) \cite{Sezer2010},
and in the context of other processes different from the Brownian
motion, e.g., Peskir and Shiryaev (2002) \cite{Peskir2002}, Gapeev
(2005) \cite{Gapeev2005}, and Bayraktar, Dayanik, and Karatzas
(2006) \cite{Bayraktar2006}.} In this problem, the drift of a
Brownian motion changes at some unknown and unobservable disorder
time. The objective is to detect this change as quickly as possible
after it happens. This problem is also studied by using the Bayesian
posterior process, that now estimates the probability that the drift
has already changed, based on the past information. I managed to
show that the Bayesian posterior distribution process of a
disorder discrete process that is close in distribution to a
disorder Brownian motion, has a similar structure to the posterior
distribution process in our paper. I
would like to apply this result to optimal stopping-time problem in
the context of a G/G/1 queue under heavy traffic where one of the
parameters of the model such as the arrival/service rate changes
randomly.

I believe that `disorder queues' can enrich the classical models,
as it often happens in real life situations that the parameters of the system change over time.

\subsubsection{Parameter Estimation in General Diffusion Processes}
The structure of the limit process $\limn\ptn$ is
surprising and raises further questions about the structure of
Bayesian posterior distribution processes of more general diffusion
processes with uncertainty. I plan to study an approximation for
a model suggested by Zakai (1969) \cite{Zakai1969}. This model is
fundamental in filtering theory and signal processing. Zakai
analyzed a model with a diffusion process $(X(t))$ satisfying the
stochastic differential equation
\begin{align}
X(t) = X(0) + \int_0^t a(X(s))ds +  \int_0^t b(X(s))dW_1(s) ,
\label{3}
\end{align}
where $X(0)$ is a random variable, $(W_1(t))$ is a Brownian motion,
and $a$ and $b$ are real-valued functions such that $b\neq 0$. Let
$(L(t))$ be the observed process which is related to $(X(t))$ by
\begin{align}
L(t) = \int_0^t g(X(s))ds +  \int_0^t \sigma dW_2(s) ,\label{4}
\end{align}
where $(W_2(t))$ is a Brownian motion, $g$ is a real-valued
function, and $\sigma$ is a positive constant. Notice that if $g$ is
the identity function and if $a=b=0$ then $X(t) \equiv X(0)$ and
$(L(t))$ is a Brownian motion with an unknown linear drift
$(X(0)t)$. This is the model that we studied in this paper with $\theta=X(0)$. Zakai presented an equation that is satisfied
by the unnormalized Bayesian posterior distribution process of the
location of $(X(t))$ given the observation $(L(s))_{0\leq s\leq t}$,
commonly known as the \emph{Zakai equation}, see Zakai (1969, equation (11)) \cite{Zakai1969}. I would like to
consider a sequence of processes $\{(X^n(t),L^n(t))\}_{n\in
\mathbb{N}}$ that converges in distribution to $(X(t),L(t))$ and to
analyze the limit of the Bayesian posterior distribution processes
$$\boldsymbol{p^n}(t,l) := P(\theta=l\mid (L^n(s))_{0\leq s\leq
t}),\;\;t\in[0,\infty),\;l\in S.$$
I would like to see whether the limit of $\boldsymbol{p^n}$ exists, under proper scaling of the parameters, the functions and the processes, and if so, what is its structure and when
can it be considered as the Bayesian posterior distribution process
of another process $(X'(t),L'(t))$ that satisfies Eqs.~(\ref{3})--(\ref{4}) with some $a',b',g',$ and $\sigma'$.
This research can shed a light on the behavior of Bayesian
posterior distribution processes in more general and realistic
models, where the process $(X(t))$ evolves randomly over time.

\textbf{Acknowledgement.} This paper is part of the Ph.D. thesis of the author
done under the supervision of Professor Eilon Solan in Tel-Aviv
University. The author would like to thank Professor Solan for his
many comments that improved the paper. The author is also grateful
to Rami Atar for the discussions we had on the subject, and to two anonymous referees for their
suggestions, which significantly improved the presentation of the paper; Theorem \ref{thm:difference}, Corollary \ref{cor:convergenceU}, and Remark \ref{rem:distance_vf} are a result of their comments.
This research was supported in part by Israel Science Foundation [Grant \#538/11] and by the Google
Inter-university Center for Electronic Markets and Auctions.

\section{APPENDIX - Proofs}\label{sec:proofs}
For the proofs of Theorems \ref{thm:convergence_ptt} and
\ref{thm:convergence_Vn} it is convenient to present a precise
probability space on which the sequence of random parameter systems
is defined.
\subsection{Probability Space}\label{sec:appendix_prob_space}
Let $\theta$ be a random variable defined on the probability space
$(\Omega_\theta, \F_\theta, P_\theta)$. Denote the support of
$\theta$ by $S\subseteq \mathbb{R}$ and suppose that $S$ is bounded
and countable. For every $l\in S$, let $\pi_l:=P_\theta(\theta=l)$.
Let $(\Omega_V, \F_V, P_V)$ be a probability space on which a
sequence of i.i.d. random variables $\{\vi\}_{i\geq 1}$ is defined
such that for every $i\geq 2 $, $\vi$ is distributed as the random
variable $v$ that satisfies Assumption \ref{asu:expectation1}.
Define the probability space $(\Omega_\theta\times \Omega_V,
\sigma(\F_\theta\times\F_V), P)$ such that for every $A_1\times
A_2\subseteq S\times \Omega_V$ one has $P(A_1\times A_2) =
P_\theta(A_1) P_V(A_2)$. Therefore, $\{\vi\}_{i\geq 1}$ and $\theta$
are independent with respect to the probability function $P$.
Denote by $P_l$ the probability measure over $\Omega_\theta\times
\Omega_V$ given $\theta=l$. That is, for every $l\in S$, and every
$C\subseteq S\times \Omega_V$, set $P_l(C):=P (C\mid \theta=l)$.
%$\\$
%$\\$
\subsection{The $n$-th System}
Let $t_u$ be a parameter and $u_1$ be the random variable defined in the previous paragraph, such that for every $t\in[0,\infty)$ one
has
\begin{align}\label{eq:like_asu_t_u}
P(u_1+t_u\geq t)=P(v\geq t \mid v\geq t_u).
\end{align}
For every $l\in S$ and every $n\inN$, define the parameter $\muln$.
For every $i\geq 1$, every $l\in S$, and every $n\inN$ define
$v_{i,l}^n:=\frac{\vi}{\muln}$ and $t^n_v:=\frac{t_u}{\muln}$. For
every $n\in\mathbb{N}$, let
$$\mathcal{RS}^n_{\pi}(\theta)=\left(t^n_v, v, \mun,
\{\vin\}_{i\geq 1}, \pi \right) = \sum_{l\in
S}\mathbb{I}_{\{\theta=l\}}\left(t^n_v, v, \mu_l^n,
\{v^n_{i,l}\}_{i\geq 1} \right)$$ be a sequence of random parameter
systems. For every $n\inN$ this construction generates the random
parameter system that was defined in Section \ref{sec:RPSn}. Notice
that for every $l\in S$ and every $t\in[0,\infty)$ one has
\begin{align}\notag
P_l\left(v_{1,l}^n+t^n_v \geq t\right)
&= P_l\left(\frac{u_1}{\muln}+\frac{t_v}{\muln} \geq t\right)
=P_l\left(u_1+t_v \geq t\muln\right)
= P\left(v\geq t\muln\right|\left.v \geq t_u\right)\\\notag
&=P\left(\frac{v}{\muln}\geq t\right|\left.\frac{v}{\muln} \geq
\frac{t_u}{\muln}\right)
=P\left(\frac{v}{\muln}\geq t\right|\left.\frac{v}{\muln} \geq
t^n_v\right),
\end{align}
where the third equality follows from Eq.~(\ref{eq:like_asu_t_u}).
Therefore, for every $n\inN$ Assumption \ref{asu:t_u_t_v} is
satisfied.

%
%
%
%
%
%

%\begin{proof}[\textbf{Proof of Theorem \ref{thm:convergence_ptt}}]
%
\subsection{Proof of Theorem \ref{thm:convergence_ptt}}
We divide the proof into two parts. We first prove
Eq.~(\ref{Main_varphi}) and thereafter conclude Eq.~(\ref{Main_p}).

\subsubsection{\textbf{Proof of Eq.~(\ref{Main_varphi})}}
Recall that by the definition of $\vphis$, the process $\vphiasf$
satisfies
\begin{align}\label{vphit}
\vphiasf(l,t) = \exp \left\{ \frac{l\sigmaf}{\sqrt{\alpha}} W'(t)-
\frac{1}{2}\left(\frac{l\sigmaf}{\sqrt{\alpha}}\right)^2
t+\frac{\theta\sigmaf}{\sqrt{\alpha}}\cdot
\frac{l\sigmaf}{\sqrt{\alpha}} t\right\},\;\;l\in
S,\;t\in[0,\infty),
\end{align}
where $(W'(t))$ is a standard Brownian motion independent of
$\theta$.

% Let $\vphit$ be a process of two variables with the
%distribution $\vphiasf$.
%That is, there exists a random variable $\theta$ and an independent
%Brownian motion $(W'(t))$ such that $\vphit$ admits the following
%representation:
%From Eq.~(\ref{vphit}) it follows that
In order to prove Eq.~(\ref{Main_varphi}) it suffices to prove that
\begin{align}\label{lim_vphitn}
\limn\ln(\vphitn)\eqd\ln(\vphiasf).
\end{align}
Denote
\begin{align}\notag
\tilde{W}^n(t):=\frac{\sum_{i=1}^{\lfloor nt\rfloor}\left[
-\frac{f'(\vi)}{f(\vi)}\vi\right] -
nt}{\sigmaf\sqrt{n}},\;\;t\in[0,\infty),
\end{align}
\begin{align}\notag
\bar{L}^n(t):=\frac{L^n(nt)}{n},\;\;t\in[0,\infty),
\end{align}
and
\begin{align}\label{Unt}
\zetatn(l,t):=&
\sigmaf\sqrt{n}\frac{\muln-\muzn}{\muthn}\tilde{W}^n(\bar{L}^n(t))\\\notag
&-\frac{1}{2}\left(\frac{l\sigmaf}{\alpha}\right)^2
\bar{L}^n(t)+\frac{\theta\sigmaf}{\alpha}\cdot
\frac{l\sigmaf}{\alpha} \bar{L}^n(t),\;\;l\in S,\;t\in[0,\infty).
\end{align}
From Eq.~(\ref{lim_vphitn}) and Theorem 3.1 in Billingsley (1999)
\cite{Billingsley1999} it follows that in order to prove
Eq.~(\ref{Main_varphi}) it suffices to prove that
\begin{align}\label{lim_vphitn2}
\limn ( \ln(\vphitn) - \zetatn ) = 0 \;\; \text{u.o.c.\;(Proposition
\ref{prop1})}
\end{align}
and
\begin{align}\label{lim_vphitn3}
&\limn \zetatn \eqd \ln(\vphiasf)\;\;\text{(Proposition
\ref{prop2})}.
\end{align}
\begin{prop}[Proving Eq.~(\ref{lim_vphitn2})]\label{prop1}
Under Assumptions \ref{asu:t_u_t_v}, \ref{asu:expectation1},
\ref{asu:lambda_rates}, \ref{asu:density1}, and \ref{asu:density22},
the following holds:
\begin{align}\label{eq:prop1}
\limn ( \ln(\vphitn) - \zetatn ) = 0 \;\; \text{{\rm u.o.c.}}
\end{align}
\end{prop}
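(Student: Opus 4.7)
I would prove Proposition \ref{prop1} by Taylor-expanding each interarrival log-likelihood ratio to third order, summing over arrivals, matching the leading terms to the three components of $\zetatn(l,t)$, and bounding the remainder via Assumption \ref{asu:density22}.2. The first step is to apply Lemma \ref{lemma0}, which guarantees that the two boundary contributions to $\ln\vphitn(l,t)$---the effect of the system being activated before $t=0$ (through $t_v^n$), and the no-arrival tail interval $\bigl(\sum_{i=1}^{L^n(nt)}\vin,\,nt\bigr]$---are each $o(1)$ a.s.~u.o.c. It therefore suffices to show
\[
\sum_{i=1}^{L^n(nt)}\ln\!\left(\frac{f_l^n(\vin)}{f_0^n(\vin)}\right) - \zetatn(l,t) \,\longrightarrow\, 0 \quad \text{a.s., u.o.c.}
\]

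\textbf{Expansion and identification of leading terms.} Using Remark \ref{rem:density_f_to_f_theta} to write $f_k^n(s)=\mu_k^n f(s\mu_k^n)$, and the probability-space construction from the Appendix (under which $\vin = u_i/\muthn$ a.s.), each summand decomposes as
\[
\ln\!\left(\frac{\muln}{\muzn}\right) + \bigl[\ln f(u_i(1+\delta_l^n)) - \ln f(u_i(1+\delta_0^n))\bigr], \qquad \delta_k^n := \frac{\mu_k^n-\muthn}{\muthn}.
\]
Assumption \ref{asu:lambda_rates} gives $\sup_{k\in S}|\delta_k^n|=O(n^{-1/2})$. Taylor-expand $\ln f$ about $u_i$ to third order and $\ln(\muln/\muzn)$ about $1$ to second order. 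The summed first-order piece equals $\frac{\muln-\muzn}{\muthn}\sum_{i=1}^{L^n(nt)}\frac{f'(u_i)}{f(u_i)}u_i$, which by the very definition of $\tilde W^n$ becomes a deterministic multiple of $L^n(nt)$ plus a multiple of $\sqrt n\,\tilde W^n(\bar L^n(t))$, reproducing the Brownian piece of $\zetatn(l,t)$. The summed second-order piece is $\frac{1}{2}\sum(f'/f)'(u_i)u_i^2\bigl[(\delta_l^n)^2-(\delta_0^n)^2\bigr]$, in which the deterministic factor equals $(\muln-\muzn)(\muln+\muzn-2\muthn)/(\muthn)^2 = O(1/n)$; by the functional SLLN applied to the i.i.d.~sequence $\{(f'/f)'(u_i)u_i^2\}$, whose mean is $1-\sigmaf^2$ by Lemma \ref{lem:R}, Eq.~(\ref{eq:R2}), together with $L^n(nt)/n\to\alpha t$ u.o.c., this yields a u.o.c.~limit linear in $\bar L^n(t)$. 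Combining with the second-order expansion of $L^n(nt)\ln(\muln/\muzn)$ and simplifying using $\muln-\mu_k^n \sim (l-k)/\sqrt n$ from Assumption \ref{asu:lambda_rates}.1, elementary algebra collapses all deterministic drift pieces into exactly $-\frac{1}{2}(l\sigmaf/\alpha)^2\bar L^n(t) + (l\theta\sigmaf^2/\alpha^2)\bar L^n(t)$, matching $\zetatn(l,t)$.

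\textbf{Remainder, uniformity, and main obstacle.} Assumption \ref{asu:density22}.2 dominates the third-order Taylor remainder (per summand) by $\frac{1}{6}M((1+\epsilon_M)u_i)|\delta_k^n|^3$, valid once $n$ is large enough that $\sup_{k\in S}|\delta_k^n|\leq\epsilon_M$. Since $|\delta_k^n|^3=O(n^{-3/2})$, $L^n(nt)=O(n)$, and $E[M((1+\epsilon_M)v)]<\infty$, the SLLN applied to $\{M((1+\epsilon_M)u_i)\}$ gives a summed-remainder bound of order $O(n^{-1/2})=o(1)$. The main obstacle is not any single estimate but the promotion of these pointwise/in-probability bounds to genuinely a.s.~u.o.c.~bounds that are uniform jointly in $l\in S$ and $t$ on compacta; this is handled by using the monotonicity of the partial-sum processes in the summation index $i$ (which promotes pointwise-in-$t$ control into u.o.c.~control via a standard sandwich argument) together with $\sup_{l\in S}|h^n(l)-l|\to 0$ (Assumption \ref{asu:lambda_rates}.1) and the boundedness of $S$, which supply the uniformity in $l$.
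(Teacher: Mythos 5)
Your proposal is correct and matches the paper's strategy essentially step for step: dispose of the two boundary log-tail terms via Lemma~\ref{lemma0}, change variables to $u_i=\muthn\vin$, third-order Taylor-expand the per-arrival log-likelihood ratio with Lagrange remainder, identify the first-order sum with $\tilde W^n$ via its definition, identify the second-order sum via the FSLLN and Lemma~\ref{lem:R} (Eq.~(\ref{eq:R2})), dominate the third-order remainder by $M((1+\epsilon_M)u_i)$ under Assumption~\ref{asu:density22}.2, and obtain uniformity in $l$ from Assumption~\ref{asu:lambda_rates}.1 and boundedness of $S$, with the random time-change device supplying u.o.c.\ control for the composed processes. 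The one genuine organizational difference is that the paper first rewrites $\sum_i\ln(f^n_l/f^n_0)$ as $\sum_i\ln(f^n_\theta/f^n_0)-\sum_i\ln(f^n_\theta/f^n_l)$ before expanding, so that in each piece the ``true'' density $f^n_\theta(\vin)=\muthn f(u_i)$ appears exactly (no Taylor expansion on the numerator), and Lemma~\ref{lemma1} is stated once and invoked for both pieces by symmetry; you instead expand $\ln f(u_i(1+\delta_l^n))-\ln f(u_i(1+\delta_0^n))$ directly as a difference of two expansions about the same base point $u_i$. The two routes are algebraically equivalent; the paper's buys a cleaner modular lemma statement, yours buys a shorter single computation. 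Two small points to tighten if you write it out fully: the deterministic drift terms match the target in $\zetatn$ only up to an $o(1)$ term coming from the SLLN and from $\sqrt n(\mu^n_l-\mu^n_k)\to l-k$, not exactly; and the ``sandwich/monotonicity'' remark should be made precise---the paper does this via the FSLLN and random time-change theorem of Chen and Yao (2001, Theorems~5.10 and~5.3), which is the standard packaging of exactly the argument you are gesturing at.
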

\begin{proof}
The following series of equations presents the Radon--Nikod\'{y}m
derivative $(\vphitn(l,t))$ in a more convenient form. For every
$l\in S$ and every $t\in[0,\infty)$ one has
\begin{align}\label{eq:ptn_devided_by_1-ptn}
\vphitn(l,t)
=&
\frac{f_l^n(v_1^n|v_1^n > t^n_v)}{f_0^n(v_1^n|v_1^n>t^n_v)}\cdot
\frac{  \prod_{i=2}^{L^n(nt)} f_l^n(\vin) }{ \prod_{i=2}^{L^n(nt)}
f^n_0(\vin)}
\\\notag
&\cdot  \frac{P_l\left(v_{L^n(nt)+1}^n > nt -
\sum_{i=1}^{L^n(nt)}\vin\mid
\sum_{i=1}^{L^n(nt)}\vin\right)}{P_0\left(v_{L^n(nt)+1}^n > nt -
\sum_{i=1}^{L^n(nt)}\vin\mid
\sum_{i=1}^{L^n(nt)}\vin\right)}\\\notag
=&
\frac{P_0^n(v_1^n > t^n_v)}{P_l^n(v_1^n>t^n_v)}\cdot
\frac{  \prod_{i=1}^{L^n(nt)} f_l^n(\vin) }{ \prod_{i=1}^{L^n(nt)}
f^n_0(\vin)}
\\\notag
&\cdot  \frac{P_l\left(v_{L^n(nt)+1}^n > nt -
\sum_{i=1}^{L^n(nt)}\vin\mid
\sum_{i=1}^{L^n(nt)}\vin\right)}{P_0\left(v_{L^n(nt)+1}^n > nt -
\sum_{i=1}^{L^n(nt)}\vin\mid
\sum_{i=1}^{L^n(nt)}\vin\right)}\\\label{eq:ptn_devided_by_1-ptn2}
=& \exp\left\{\sum_{i=1}^{L^n(nt)}\ln\left(\frac{
f^n_l(\vin)}{f^n_0(\vin)}\right) + \ln\left(\frac{1-F^n_0
(t^n_v)}{1-F^n_l (t^n_v)}\right)\right.\\\notag
&\left.+\ln\left(\frac{1-F^n_l\left( nt -
\sum_{i=1}^{L^n(nt)}\vin\mid
\sum_{i=1}^{L^n(nt)}\vin\right)}{1-F^n_0\left( nt -
\sum_{i=1}^{L^n(nt)}\vin\mid
\sum_{i=1}^{L^n(nt)}\vin\right)}\right)\right\}.
\end{align}
From Eq.~(\ref{eq:ptn_devided_by_1-ptn2}) and the triangle
inequality it follows that, for every $l\in S$ and $t\in[0,\infty)$,
\begin{align}\label{eq:lnvarphi_U}
&|\ln(\vphitn)(l,t) - \zetatn(l,t)|\\\notag
&\leq\left| \sum_{i=1}^{L^n(nt)}\ln\left(\frac{
f^n_\theta(\vin)}{f^n_0(\vin)}\right) + \ln\left(\frac{1-F^n_0
(t^n_v)}{1-F^n_\theta (t^n_v)}\right)
+\ln\left(\frac{1-F^n_\theta\left( nt - \sum_{i=1}^{L^n(nt)}\vin\mid
\sum_{i=1}^{L^n(nt)}\vin\right)}{1-F^n_0\left( nt -
\sum_{i=1}^{L^n(nt)}\vin\mid \sum_{i=1}^{L^n(nt)}\vin\right)}\right)
\right.\\\notag
&-\left.
\sigmaf\sqrt{n}\frac{\muzn-\muthn}{\muthn}\tilde{W}^n(\bar{L}^n(t))
-\frac{1}{2}\left(\frac{(0-\theta)\sigmaf}{\alpha}\right)^2
\bar{L}^n(t) \right|\\\notag
&+\left| \sum_{i=1}^{L^n(nt)}\ln\left(\frac{
f^n_\theta(\vin)}{f^n_l(\vin)}\right) - \ln\left(\frac{1-F^n_l
(t^n_v)}{1-F^n_\theta (t^n_v)}\right)
-\ln\left(\frac{1-F^n_\theta\left( nt - \sum_{i=1}^{L^n(nt)}\vin\mid
\sum_{i=1}^{L^n(nt)}\vin\right)}{1-F^n_l\left( nt -
\sum_{i=1}^{L^n(nt)}\vin\mid \sum_{i=1}^{L^n(nt)}\vin\right)}\right)
\right.\\\notag
&-\left.
\sigmaf\sqrt{n}\frac{\muln-\muthn}{\muthn}\tilde{W}^n(\bar{L}^n(t))
-\frac{1}{2}\left(\frac{(l-\theta)\sigmaf}{\alpha}\right)^2
\bar{L}^n(t) \right|.
\end{align}
We prove that the second term on the right-hand side of
Eq.~(\ref{eq:lnvarphi_U}) converges to zero u.o.c. The proof for the
first term is similar and is therefore omitted. From the triangle
inequality it follows it is sufficient to verify that the following
two processes converge to zero u.o.c.:
\begin{align}\label{eq:lnvarphi_U3a}
\xitn(l,t):= \ln\left(\frac{1-F^n_l (t^n_v)}{1-F^n_\theta
(t^n_v)}\right)+ \ln\left(\frac{1-F^n_\theta\left( nt -
\sum_{i=1}^{L^n(nt)}\vin\mid
\sum_{i=1}^{L^n(nt)}\vin\right)}{1-F^n_l\left( nt -
\sum_{i=1}^{L^n(nt)}\vin\mid
\sum_{i=1}^{L^n(nt)}\vin\right)}\right),
\end{align}
$l\in S,\;t\in[0,\infty),$ and
\begin{align}\label{eq:lnvarphi_U3b}
\chitn(l,t):=\sum_{i=1}^{n\bar{L}^n(t)}\ln\left(\frac{
f^n_\theta(\vin)}{f^n_l(\vin)}\right)
-\sigmaf\sqrt{n}\frac{\muln-\muthn}{\muthn}\tilde{W}^n(\bar{L}^n(t))
-\frac{1}{2}\left(\frac{(l-\theta)\sigmaf}{\alpha}\right)^2
\bar{L}^n(t),
\end{align}
$l\in S,\;t\in[0,\infty)$. We prove these convergence in Lemma
\ref{lemma0} and Lemma \ref{lemma1}, respectively.
\begin{lem}\label{lemma0}
Under Assumptions \ref{asu:t_u_t_v}, \ref{asu:expectation1},
\ref{asu:lambda_rates}, and \ref{asu:density22}.3,
\begin{align}\label{eq:lemma0}
\limn\xitn = 0 \;\; \text{{\rm u.o.c.}}
\end{align}
\end{lem}
\begin{proof}
To prove Eq.~(\ref{eq:lemma0}) it suffices to show that for every
$T>0$ the following two equalities hold:
\begin{align}\label{limAn0_part_1b}
P\left(\limn
\underset{S\times[0,T]}{\sup}\,\left|\ln\left(\frac{1-F^n_l
(t^n_v)}{1-F^n_\theta (t^n_v)}\right) \right|= 0\right)=1
\end{align}
and
\begin{align}\label{limAn0_part_2b}
P\left(\limn \underset{S\times
[0,T]}{\sup}\,\left|\ln\left(\frac{1-F^n_l\left( nt -
\sum_{i=1}^{L^n(nt)}\vin\mid
\sum_{i=1}^{L^n(nt)}\vin\right)}{1-F^n_\theta\left( nt -
\sum_{i=1}^{L^n(nt)}\vin\mid \sum_{i=1}^{L^n(nt)}\vin\right)}\right)
\right|= 0\right)=1.
\end{align}

We prove only Eq.~(\ref{limAn0_part_2b}). The proof of
Eq.~(\ref{limAn0_part_1b}) is similar and is therefore omitted.
The following series of equations, which holds for sufficiently
large $n\inN$, yields an upper bound for the expression
$\underset{S\times [0,T]}{\sup}\,\left|\ln\left(\frac{1-F^n_l\left(
nt - \sum_{i=1}^{L^n(nt)}\vin\mid
\sum_{i=1}^{L^n(nt)}\vin\right)}{1-F^n_\theta\left( nt -
\sum_{i=1}^{L^n(nt)}\vin\mid \sum_{i=1}^{L^n(nt)}\vin\right)}\right)
\right|$:

\begin{align}\notag
& \underset{S\times [0,T]}{\sup}\,\left|\ln\left(\frac{1-F^n_l\left(
nt - \sum_{i=1}^{L^n(nt)}\vin\mid
\sum_{i=1}^{L^n(nt)}\vin\right)}{1-F^n_\theta\left( nt -
\sum_{i=1}^{L^n(nt)}\vin\mid \sum_{i=1}^{L^n(nt)}\vin\right)}\right)
\right| \\\label{eeeq1}
=&\underset{S\times [0,T]}{\sup}\,
\left|
\ln\left(1-F\left( \muln\left.\left(nt -
\sum_{i=1}^{L^n(nt)}\vin\right)\right|\;
\sum_{i=1}^{L^n(nt)}\vin\right)\right)\right.\\\notag
&\left.-\ln \left(1-F\left.\left( \muthn\left(nt-
\sum_{i=1}^{L^n(nt)}\vin\right)\right|\;
\sum_{i=1}^{L^n(nt)}\vin\right)\right) \right| \\\label{eeeq2}
=&\underset{S\times [0,T]}{\sup}\,|\muthn-\muln|\left(nt -
\sum_{i=1}^{L^n(nt)}\vin\right)\frac{f\left(d^n_l\left(nt -
\sum_{i=1}^{L^n(nt)}\vin\right)\right)}{1-F\left(d^n_l\left(nt -
\sum_{i=1}^{L^n(nt)}\vin\right)\right)}\\\label{eeeq3}
\leq& \,\underset{S\times
[0,T]}{\sup}\,\frac{\sqrt{n}|\muthn-\muln|}{d^n_l}\frac{1}{\sqrt{n}}N
\left(d^n_l \left(nt -
\sum_{i=1}^{L^n(nt)}\vin\right)\right)\\\label{eeeq4}
\leq&
\,\underset{S}{\sup}\,\frac{\sqrt{n}|\muthn-\muln|}{d^n_l}\cdot\underset{
[0,T]}{\sup}\,\frac{1}{\sqrt{n}}N \left((1+\epsilon_N)\muthn
v_{L^n(nt)+1}^n \right)\\\label{eeeq5}
=&
\,\underset{S}{\sup}\,\frac{\sqrt{n}|\muthn-\muln|}{d^n_l}\cdot\underset{
[0,T]}{\sup}\,\frac{1}{\sqrt{n}}N \left(
(1+\epsilon_N)u_{L^n(nt)+1}\right),
\end{align}
where $d^n_l\in(\muthn,\muln)$ or $d^n_l\in(\muln,\muthn)$.
Eq.~(\ref{eeeq1}) follows from Remark
\ref{rem:density_f_to_f_theta}, while Eq.~(\ref{eeeq2}) follows from
the Lagrange mean value theorem. Inequality (\ref{eeeq3}) follows
from Assumption \ref{asu:density22}.3 and the fact that $N(x)$ is
monotone nondecreasing. Inequality (\ref{eeeq4}) follows since, by
Eq.~(\ref{eq:A2}),
\begin{align}
\limn\underset{S}{\sup}\,|d^n_l-\muthn|\leq\limn\underset{S}{\sup}\,|\muln-\muthn|=0,
\end{align} and since $N(x)$ is monotone nondecreasing.
Eq.~(\ref{eeeq5}) follows since for every $i\geq1$ and every $n\inN$
one has $\muthn\vin=\vi$ (see Section
\ref{sec:appendix_prob_space}). Assumption \ref{asu:density22}.3
implies that $E[N((1+\epsilon_N)v)]^2<\infty$ and therefore
\begin{align}\label{eeeq51}
\limn\frac{1}{\sqrt{n}}N \left((1+\epsilon_N)u_{L^n(nt)+1} \right) =
\limn\sqrt{\tfrac{1}{n}N^2 \left((1+\epsilon_N)u_{L^n(nt)+1}
\right)}=0\;\;\text{u.o.c.}
\end{align}
Finally, from Assumption \ref{asu:lambda_rates}.1 it follows that
\begin{align}\label{eeeq52}
\limn \underset{S\times[0,T]}{\sup}\, \sqrt{n}|\muln-\muthn|
\leq& \limn \underset{S}{\sup}\, |\sqrt{n}(\muln-\muzn) - l|
+\underset{S}{\sup}\, |\sqrt{n}(\muthn-\muzn) - \theta|\\\notag
&+ \underset{S}{\sup}\, |l|+
 |\theta| <\infty,
\end{align}
where the last inequality follows since $S$ is bounded.
Eqs.~(\ref{eeeq51})--(\ref{eeeq52}) imply that the right-hand side
of Eq.~(\ref{eeeq5}) converges to $0$ u.o.c.

\end{proof}
\begin{lem}\label{lemma1}
Under Assumptions \ref{asu:expectation1}, \ref{asu:lambda_rates},
\ref{asu:density1}, \ref{asu:density22}.1, and
\ref{asu:density22}.2,
\begin{align}\label{eq:lemma1}
\limn\chitn(l,t) = 0 \;\; \text{{\rm u.o.c.}}
\end{align}
\end{lem}
\begin{proof}
Eq.~(\ref{eq:lemma1}) is equivalent to the requirement that
\begin{align}\label{limXnt_1}
P\left(\limn \underset{S\times[0,T]}{\sup}\, \left|\chitn(l,t)
\right|= 0\right)=1
\end{align}
for every $T>0$. The first term in Eq.~(\ref{eq:lnvarphi_U3b}) is
$\sum_{i=1}^{\bar{L}^n(t)}\ln\left(\frac{
f^n_\theta(\vin)}{f^n_l(\vin)}\right) $ which is a composition of
$$\sum_{i=1}^{\lfloor nt\rfloor}\ln\left(\frac{
f^n_\theta(\vin)}{f^n_l(\vin)}\right) $$ and $\bar{L}^n(t)$. For
every $l\in S$ denote
\begin{align}
\hn:=\frac{\muln-\muthn}{\muthn}.
\end{align}
The following series of equations presents $\sum_{i=1}^{\lfloor
nt\rfloor}\ln\left(\frac{ f^n_\theta(\vin)}{f^n_l(\vin)}\right) $ in
a more convenient form:
\begin{align}\label{eq1}
\sum_{i=1}^{\lfloor nt\rfloor}&\ln\left(\frac{
f^n_\theta(\vin)}{f^n_l(\vin)}\right)
=\sum_{i=1}^{\lfloor nt\rfloor}\ln\left(\frac{ \muthn
f(\muthn\vin)}{\muln f(\muln\vin)}\right)\\\label{eq2}
=& - nt\ln (1 + \hn) - \sum_{i=1}^{\lfloor nt\rfloor} \left[
\ln\left( f(\muln\vin) \right) - \ln\left( f(\muthn\vin)
\right)\right]\\\label{eq3}
=& - nt \ln (1+\hn) - \sum_{i=1}^{\lfloor nt\rfloor}\left[ \ln\left(
f(\vi + \vi \hn) \right) - \ln\left( f(\vi)
\right)\right]\\\label{eq4}
=& - nt\left(\hn-\frac{1}{2}(\hn)^2 \right) -  nt\left(\ln (1+\hn)
-\hn+\frac{1}{2}(\hn)^2 \right)
\\\notag%\label{eq5}
&-\sum_{i=1}^{\lfloor nt\rfloor}\left[ \frac{f'(\vi)}{f(\vi)}\vi\hn
+ \frac{1}{2!}\left(\frac{f'(\vi)}{f(\vi)}\right)'\vi^2(\hn)^2
+ \frac{1}{3!}\left(\frac{f'(\cin)}{f(\cin)}\right)''\vi^3(\hn)^3
\right]\\\label{eq6}
=&\,\sigmaf\sqrt{n}\hn\tilde{W}^n(t)  +  \frac{1}{2}(\sqrt{n}\hn)^2
\left(  t  - \tfrac{1}{n} \sum_{i=1}^{\lfloor
nt\rfloor}\left(\frac{f'(\vi)}{f(\vi)}\right)'\vi^2 \right) \\\notag%\label{eq7}
&+ \frac{(\sqrt{n}\hn)^3}{3!}\cdot \sum_{i=1}^{\lfloor
nt\rfloor}\frac{1}{n^{1.5}}\left(\frac{f'(\cin)}{f(\cin)}\right)''\vi^3
+ (\sqrt{n}\hn)^2\left(\frac{\ln (1+\hn) -\hn+\frac{1}{2}(\hn)^2
}{(\hn)^2}\right)t,
\end{align}
where $\cin\in(\vi,\vi+\vi\hn)$ or $\cin\in(\vi+\vi\hn,\vi)$.
Eq.~(\ref{eq2}) follows from Remark \ref{rem:density_f_to_f_theta}
and the definition of $\hn$. Eq.~(\ref{eq3}) follows by the
definition of $\vi$. Since $f\in \mathcal{C}^3$, Eq.~(\ref{eq4}) follows from
the Taylor expansion of the function $\ln(f(x))$ with Lagrange
remainder of order $3$.
Eq.~(\ref{eq6}) is merely a rearrangement of the terms. From
Eqs.~(\ref{eq:lnvarphi_U3b}) and (\ref{eq6}) it follows that for
every $l\in S$ and $t\in[0,\infty)$ one has
\begin{align}\label{Xnt_2}
\chitn(l,t) =&\sum_{i=1}^{n\bar{L}^n(t)} \ln\left(\frac{
f^n_\theta(\vin)}{f^n_l(\vin)}\right)
-\sigmaf\sqrt{n}\hn\tilde{W}^n(\bar{L}^n(t))
-\frac{1}{2}\left(\frac{(l-\theta)\sigmaf}{\alpha}\right)^2
\bar{L}^n(t)\\\label{Xnt_3}
=&\left[-\frac{1}{2}\left(\frac{(l-\theta)\sigmaf}{\alpha}\right)^2
\bar{L}^n(t) + \frac{1}{2}(\sqrt{n}\hn)^2 \left(  \bar{L}^n(t)  -
\tfrac{1}{n}
\sum_{i=1}^{n\bar{L}^n(t)}\left(\frac{f'(\vi)}{f(\vi)}\right)'\vi^2
\right)\right] \\\notag
&+\frac{(\sqrt{n}\hn)^3}{3!}\cdot\sum_{i=1}^{\lfloor
nt\rfloor}\frac{1}{n^{1.5}}\left(\frac{f'(\cin)}{f(\cin)}\right)''\vi^3\\\notag
&+(\sqrt{n}\hn)^2\left(\frac{\ln (1+\hn) -\hn+\frac{1}{2}(\hn)^2
}{(\hn)^2}\right)\bar{L}^n(t)  .
\end{align}
We are now ready to prove Eq.~(\ref{eq:lemma1}). We show that each
of the three terms on the right-hand side of Eq.~(\ref{Xnt_3})
converges to zero u.o.c.
$\\ $\textbf{Part I: First term.} Define the following functions and
processes:
\begin{align}
g_1(l):=\frac{l-\theta}{\alpha},\;\;l\in S,
\end{align}
\begin{align}
g_1^n(l):=\sqrt{n}\hn = \sqrt{n}\frac{\muln-\muthn}{\muthn},\;\;l\in
S,
\end{align}
\begin{align}
G_1(l):=(1-\sigmaf^2) t,\;\;t\in [0,\infty),
\end{align}
\begin{align}
G_1^n(l):=\tfrac{1}{n} \sum_{i=1}^{\lfloor
nt\rfloor}\left(\frac{f'(\vi)}{f(\vi)}\right)'\vi^2,\;\;t\in
[0,\infty),
\end{align}
and
\begin{align}
\bar{L}(t)=\alpha t,\;\;t\in [0,\infty).
\end{align}
Therefore, the first term in Eq.~(\ref{Xnt_2}) can be expressed as
\begin{align}
-\frac{1}{2}\left(g_1(l)\right)^2\sigmaf^2 \bar{L}^n(t) +
\frac{1}{2}(g^n_1(l))^2 \left(  \bar{L}^n(t)  -
G_1^n(\bar{L}^n(t))\right),\;\;l\in S,\;t\in [0,\infty).
\end{align}

From the definition of $\hn$ it follows that
\begin{align}\label{eq:a}
(\sqrt{n}\hn) &= \left(\frac{\sqrt{n}(\muln-\muthn)}{\muthn}\right)
=
\left(\frac{\sqrt{n}(\muln-\muzn)}{\muthn}+\frac{\sqrt{n}(\muthn-\muzn)}{\muthn}\right).
\end{align}
Assumption \ref{asu:lambda_rates} and Eq.~(\ref{eq:a}) implies that
\begin{align}\label{eq:b}
\limn g_1^n=g_1\;\;\text{u.o.c.}
\end{align}
From Lemma \ref{lem:R} (Eq.~(\ref{eq:R2})) and the Functional Strong
Law of Large Numbers (FSLLN, see Chen and Yao (2001, Theorem 5.10)
\cite{Chen2001}) it follows that
\begin{align}\label{eq:c}
\limn G_1^n = G_1 \;\;\text{u.o.c.}
\end{align}
Next, Assumption \ref{asu:lambda_rates}.2 and the FSLLN imply that
\begin{align}\label{eq:d}
\limn\bar{L}^n=\bar{L}\;\;\text{u.o.c.}
\end{align}
and therefore, by Eqs.~(\ref{eq:c}) and (\ref{eq:d}) and the
random time-change theorem (Chen and Yao (2001, Theorem 5.3)
\cite{Chen2001}),
\begin{align}\label{eq:e}
\limn G_1^n(\bar{L}^n) = G_1(\bar{L}) \;\;\text{u.o.c.}
\end{align}
Therefore, from Eqs.~(\ref{eq:b}),(\ref{eq:d}), and (\ref{eq:e}) it
follows that
%\begin{align}\label{eq:f}
%\limn
%\frac{1}{2}\left[-\left(\frac{(\lhat-\hat{\theta})\sigmaf}{\alpha}\right)^2
%\bar{L}^n(t) + \frac{1}{2}(\sqrt{n}\hn)^2 \left(  \bar{L}^n(t)  - \frac{1}{n}
%\sum_{i=1}^{n\bar{L}^n(t)}\left(\frac{f'(\vi)}{f(\vi)}\right)'\vi^2 \right) \right]\;\;\text{u.o.c.}
%\end{align}
\begin{align}
\limn\left[-\frac{1}{2}\left(g_1\right)^2\sigmaf^2 \bar{L}^n +
\frac{1}{2}(g^n_1)^2 \left(  \bar{L}^n  -
G_1^n(\bar{L}^n)\right)\right]=0\;\;\text{u.o.c.}
\end{align}

$\\ $\textbf{Part II: Second term.} Define the process
\begin{align}
G_2^n(t):=\frac{1}{n^{1.5}}\sum_{i=1}^{\lfloor
nt\rfloor}\left[\left(\frac{f'(\cin)}{f(\cin)}\right)''\vi^3(\hn)^3
\right],\;\;t\in [0,\infty).
\end{align}
Therefore, the second term can be expressed as
\begin{align}
\frac{(g_1^n(l))^3}{3!}G^n_2(\bar{L}^n(t)),\;\;l\in S,\;t\in
[0,\infty).
\end{align}
The following equations hold for sufficiently large $n$:
\begin{align}\label{eq:o(1)3}
&G^n_2(t)=\frac{1}{n^{1.5}}\sum_{i=1}^{\lfloor
nt\rfloor}\left|\left(\frac{f'(\cin)}{f(\cin)}\right)''\vi^3 \right|
\leq \frac{1}{(1-\epsilon_M)^3}\frac{1}{n^{1.5}}\sum_{i=1}^{\lfloor
nt\rfloor}M((1+\epsilon_M)\vi)
\end{align}
%
%For proving Eqs.~(\ref{eq:o(1)2})--(\ref{eq:o(1)3}) first notice that $\cin\in(\mukn\vin,\muln\vin)$ or
The inequality in Eq.~(\ref{eq:o(1)3}) follows from Assumption
\ref{asu:density22}.2 since $\cin\in(\mukn\vin,\muln\vin)$ or
$\cin\in(\muln\vin,\mukn\vin)$. Eq.~(\ref{eq:A2}) implies that for
sufficiently large $n\inN$ and every $l\in S$ one has
$(1-\epsilon_M)\vi\leq \cin\leq (1+\epsilon_M)\vi$. From the FSLLN
and Eq.~(\ref{eq:o(1)3}) it follows that
\begin{align}\label{eq:o(1)4}
\lim G^n_2(t) = 0\;\;\text{u.o.c.}
\end{align}
Now Eqs.~(\ref{eq:b}), (\ref{eq:d}), and (\ref{eq:o(1)4}) and the
random time-change theorem (Chen and Yao (2001, Theorem 5.3)
\cite{Chen2001}) yield that
\begin{align}
\limn\frac{g_1^n}{3!}G^n_2(\bar{L}^n) = 0,\;\;\text{u.o.c.}
\end{align}

$\\ $\textbf{Part III: Third term.} Define the function
\begin{align}
g_2^n(l)=\frac{\ln (1+\hn) -\hn+\frac{1}{2}(\hn)^2
}{(\hn)^2},\;\;l\in S.
\end{align}
Therefore, the third term can be expressed as
\begin{align}
(g_1^n(l))^2 g_2^n(l)\bar{L}^n(t) ,\;\;l\in S,\;t\in[0,\infty).
\end{align}

From the Taylor expansion of $\ln(1+x)$ and Eq.~(\ref{eq:b}) and
(\ref{eq:d}) it follows that
\begin{align}\notag
& \limn(g_1^n)^2 g_2^n\bar{L}^n  = 0 \;\;\text{u.o.c.}
\end{align}
This completes the proof of Lemma \ref{lemma1}.
\end{proof}

This completes the proof of Proposition \ref{prop1}.
\end{proof}

\begin{prop}[Proving Eq.~(\ref{lim_vphitn3})]\label{prop2}
Under Assumptions \ref{asu:lambda_rates} and \ref{asu:density22}.1,
\begin{align}\notag
&\limn \zetatn \eqd \ln(\vphiasf).
\end{align}

\end{prop}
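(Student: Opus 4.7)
The plan is to identify the three summands in $\zetatn(l,t)$ as discrete analogues of the three summands in $\ln(\vphiasf(l,t))$ (see Eq.~(\ref{vphit})) and to pass to the limit term by term. First I would show that $\tilde{W}^n$ converges in distribution to a standard Brownian motion $W'$. By Lemma \ref{lem:R} (Eq.~(\ref{eq:R1})), $E[-\tfrac{f'(v)}{f(v)}v]=1$, and by Assumption \ref{asu:density22}.1 the summands $-\tfrac{f'(u_i)}{f(u_i)}u_i$ are i.i.d.\ with variance $\sigmaf^2<\infty$. Hence Donsker's invariance principle yields $\tilde{W}^n\eqd W'$ in $\mathcal{D}_\infty$, with $W'$ independent of $\theta$ because the $u_i$'s are.

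Next I would deal with the coefficient $\sigmaf\sqrt{n}(\muln-\muzn)/\muthn$. By Assumption \ref{asu:lambda_rates}.1, $\sqrt{n}(\muln-\muzn)\to l$ uniformly in $l\in S$, and by Assumption \ref{asu:lambda_rates}.2 together with Eq.~(\ref{eq:A2}) we have $\muthn\to\alpha$. Hence this coefficient converges to $\sigmaf l/\alpha$ uniformly in $l\in S$ (as a function on $S$, using the $e_\infty$ metric). Similarly, $\bar{L}^n\to\bar{L}(t)=\alpha t$ u.o.c.\ by the FSLLN (Assumption \ref{asu:lambda_rates}.2), which was already used in Part I of the proof of Lemma \ref{lemma1}.

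I would then combine these three convergences. By the random time-change theorem (Chen and Yao (2001, Theorem 5.3)) applied jointly to $\tilde{W}^n\eqd W'$ and $\bar{L}^n\to\bar{L}$ u.o.c., one obtains $\tilde{W}^n(\bar{L}^n(\cdot))\eqd W'(\alpha\,\cdot)$. Using the scaling identity $W'(\alpha t)\eqd \sqrt{\alpha}W''(t)$ for a standard Brownian motion $W''$, and combining with the uniform coefficient convergence above, the first term of $\zetatn(l,t)$ converges in distribution (in the sense of $\mathcal{E}_\infty$) to
\begin{align*}
\frac{\sigmaf l}{\alpha}\cdot\sqrt{\alpha}\,W''(t)=\frac{l\sigmaf}{\sqrt{\alpha}}W''(t).
\end{align*}
The second term converges to $-\tfrac{1}{2}(l\sigmaf/\alpha)^2\,\alpha t=-\tfrac{1}{2}(l\sigmaf/\sqrt{\alpha})^2 t$, and the third term converges to $(\theta l\sigmaf^2/\alpha^2)\,\alpha t=(\theta\sigmaf/\sqrt{\alpha})\cdot(l\sigmaf/\sqrt{\alpha})\,t$. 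Summing, the limit matches exactly the representation of $\ln(\vphiasf(l,t))$ in Eq.~(\ref{vphit}).

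The main technical obstacle is the joint convergence required for the random time-change step to apply uniformly in $l\in S$. I would handle this by first establishing convergence in the $\mathcal{E}_\infty$ sense of the coefficient functions $l\mapsto\sigmaf\sqrt{n}(\muln-\muzn)/\muthn$ on the bounded set $S$ (which is genuine uniform convergence by Assumption \ref{asu:lambda_rates}.1 and boundedness of $S$), so that multiplying by the random factor $\tilde{W}^n(\bar{L}^n(t))$, which does not depend on $l$, preserves convergence in distribution in $\mathcal{E}_\infty$. The remaining $l$-dependence in the second and third terms is polynomial in $l$, hence bounded and continuous uniformly on $S$, so the same argument applies. Independence of $W''$ from $\theta$ follows from the independence of $\{u_i\}$ from $\theta$ established in Section \ref{sec:appendix_prob_space}, which is what is needed so that the limit has the form stated in Eq.~(\ref{vphit}).
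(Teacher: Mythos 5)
Your proposal is correct and follows essentially the same route as the paper: FSLLN for $\bar{L}^n$, Donsker for $\tilde{W}^n$, the random time-change theorem for the composition, Brownian scaling, and Assumption \ref{asu:lambda_rates} for uniform (in $l$) convergence of the coefficients. The paper's only notable refinement is to invoke the Skorokhod Representation Theorem to pass to a probability space where $(\bar{L}^n,\tilde{W}^n(\bar{L}^n))$ converges u.o.c.\ rather than just in distribution; this sidesteps the joint-convergence issue you flag for the random time-change step (here it is benign, since $\bar{L}$ is deterministic) and, more importantly, produces the a.s.\ limit $\limn\vphitn=\vphihat$ that is reused later in the proof of Theorem \ref{thm:convergence_Vn}.
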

\begin{proof}
From Eq.~(\ref{eq:h}) it follows that for every $n\inN$ the process
$\zetatn$ can be expressed as
\begin{align}\notag
\zetatn(l,t)=& \sigmaf\sqrt{n}h^n(l)\tilde{W}^n(\bar{L}^n(t))
-\frac{1}{2}\left(\frac{I_S(l)\sigmaf}{\alpha}\right)^2
\bar{L}^n(t)\\\notag &+\frac{\theta\sigmaf}{\alpha}\cdot
\frac{I_S(l)\sigmaf}{\alpha} \bar{L}^n(t),\;\;l\in
S,\;t\in[0,\infty).
\end{align}

We prove that there exists a probability space $\Omega_W$ such that
\begin{align}\label{U_u.o.c.}
&\limn \zetatn= \ln(\vphiasf)\;\;\text{u.o.c.}
\end{align}
in the probability space $\Omega_\theta\times\Omega_W$. For this, we
investigate separately the parts of the process $\zetatn$ that
depend on $\theta$ and the parts that depend on $\{\vi\}_{i\geq 1}$.
From Assumption \ref{asu:lambda_rates} it follows that
\begin{align}\label{U_u.o.c.1}
\limn\sigmaf\sqrt{n}h^n = \sigmaf\frac{I_S}{\alpha}\;\;\text{u.o.c.}
\end{align}
The processes $(\bar{L}^n(t))$ and $(\tilde{W}^n(\bar{L}^n(t)))$
depend on $\{\vi\}_{i\geq 1}$, which is independent of $\theta$.
From the Skorokhod Representation Theorem and the random time-change theorem (see Chen and Yao (2001, Theorems 5.1 and 5.3)
\cite{Chen2001}) it follows that there exist a probability space
$\Omega_W$ and a standard Brownian motion $(\tilde{W}(t))$ defined
on $\Omega_W$, such that
\begin{align}\label{U_u.o.c.2}
\limn(\bar{L}^n,\tilde{W}^n(\bar{L}^n))=(\bar{L},\tilde{W}(
\bar{L}))\;\;\text{u.o.c.}
\end{align}
%where $\IT$ is the identity function over $[0,\infty)$.
From Eqs.~(\ref{U_u.o.c.1}) and (\ref{U_u.o.c.2}) it follows that in
the probability space $\Omega_\theta\times\Omega_W$
\begin{align}\label{Unt1}
\limn\zetatn&= \limn\sigmaf\sqrt{n}h^n\tilde{W}^n(\bar{L}^n)
-\frac{1}{2}\left(\frac{I_S\sigmaf}{\alpha}\right)^2
\bar{L}^n+\frac{\theta\sigmaf}{\alpha}\cdot
\frac{I_S\sigmaf}{\alpha} \bar{L}^n\\\notag &=
\sigmaf\frac{I_S}{\alpha}\tilde{W}(\bar{L})
-\frac{1}{2}\left(\frac{I_S\sigmaf}{\alpha}\right)^2 \bar{L} +
\frac{\theta\sigmaf}{\alpha}\cdot \frac{I_S\sigmaf}{\alpha}
\bar{L}\;\;\text{u.o.c.}
\end{align}
and since convergence u.o.c. implies convergence in distribution,
\begin{align}\label{Unt2}
\limn\zetatn\eqd \sigmaf\frac{I_S}{\alpha}\tilde{W}( \bar{L})
-\frac{1}{2}\left(\frac{I_S\sigmaf}{\alpha}\right)^2 \bar{L} +
\frac{\theta\sigmaf}{\alpha}\cdot \frac{I_S\sigmaf}{\alpha} \bar{L}
\end{align}
The scaling of the standard Brownian motion implies that
$(\tilde{W}(\bar{L} (t)))$ is distributed as
$(\sqrt{\alpha}\tilde{W}(t))$ and the result follows.

%\begin{align}\label{U_u.o.c.3}
%\limn\boldsymbol{\zeta^n}(l,t)= \sigmaf\frac{l}{\alpha}\tilde{W}(\alpha
%t) -\frac{1}{2}\left(\frac{l\sigmaf}{\sqrt{\alpha}}\right)^2 t
%+ \frac{\theta\sigmaf}{\sqrt{\alpha}}\cdot
%\frac{l\sigmaf}{\sqrt{\alpha}} t\;\;\text{u.o.c.}
%\end{align}%
%
%and since convergence u.o.c. implies convergence in distribution it
%follows that
%\begin{align}\label{U_u.o.c.3}
%\limn\boldsymbol{\zeta^n}(l,t)\eqd
%\sigmaf\frac{l}{\alpha}\tilde{W}(\alpha t)
%-\frac{1}{2}\left(\frac{l\sigmaf}{\sqrt{\alpha}}\right)^2 t +
%\frac{\theta\sigmaf}{\sqrt{\alpha}}\cdot
%\frac{l\sigmaf}{\sqrt{\alpha}} t.
%\end{align}

This completes the proof of Eq.~(\ref{Main_varphi}).
\end{proof}

The following remark explains the requirement that the appropriate
rates under the different types are relatively close, up to order
 $\frac{1}{\sqrt{n}}$ (Assumption \ref{asu:lambda_rates}.1).
\begin{rem}\label{rem2}
If there exists a parameter value $l^*\in S$ such that the
difference between the rates $\mu_{l^*}^n$ and $\muzn$ satisfies
$|\mu_{l^*}^n - \muzn|>> \frac{1}{\sqrt{n}},$
then for every $t>0$ the following limit
holds: $\limn\sigmaf\sqrt{n}h^n(t,l^*) = \pm\infty$, and there will
be no convergence of $\zetatn(t,l^*)$. On the other hand, if there
is a parameter value $l^*\in S$ such that the difference between the
rates $\muln$ and $\muzn$ satisfies $|\mu_{l^*}^n - \muzn|<< \frac{1}{\sqrt{n}},$
then for every $t>0$ the following limit holds:
$\limn\sigmaf\sqrt{n}h^n(t,l^*) = 0$, and the DM will not be able to
distinguish between them.
\end{rem}

\subsubsection{\textbf{Proof of Formula (\ref{Main_p}).}}
%We now prove Eq.~(\ref{Main_p}).
From Eq.~(\ref{frac_q_l_q_m}) we have
\begin{align}\notag
\piasf (l,t): = \frac{\pi_l\vphiasf(l,t)}{\sum_{k\in S}
 \pi_k\vphiasf(k,t)},\;\;l\in S,\;t\in[0,\infty).
\end{align}
%From the definitions of $\vphis$ and $\pis$ it follows that $\pt$ is
%distributed as $\piasf$
%$$\pt\eqd\boldsymbol{\xi}_{{\frac{\sqrt{\alpha}}\sigmaf},\pi}.$$
We show that
\begin{align}\notag
\limn\ptn\eqd\piasf.
\end{align}
To this end we define a function
$\Lambda:\mathcal{E}_\infty\rightarrow\mathcal{E}_\infty$ by
\begin{align}\label{Lambda}
\Lambda(\vphi)(l,t) :=  \frac{\pi_l\vphi(l,t)}{\sum_{k\in S}
 \pi_k\vphi(k,t)},\;\;l\in S,\;t\in[0,\infty).
\end{align}
$\Lambda$ is continuous with respect to the metric $e_\infty$.
Therefore,
\begin{align}\notag
\limn\ptn=\limn \Lambda(\vphitn) \eqd \Lambda(\vphiasf)=\piasf ,
\end{align}
where the first equality follows from Eqs.~(\ref{frac_p_l_p_m}) and
(\ref{Lambda}), and the second equality follows from
Eq.~(\ref{Main_varphi}). This completes the proof of Theorem
\ref{thm:convergence_ptt}.

\subsection{Proof of Theorem \ref{thm:difference}}
In order to construct a random variable for which the difference $\sigma_{v} - \frac{1}{\sigma_{f}}$ is large, we
use a random variable that has expectation $1$ and has no variance. Let $z$ be a random variable with the density $g(x):=C/(1+x^3)$, $x>0$, where $C=2\pi/3^{1.5}$.
Then, $E[z]=1$ and ${\rm Var}[z]=\infty$. We now show that $z$ satisfies Assumption \ref{asu:density22}. The variance of $\frac{g'(z)}{g(z)}z$ is given by
\begin{align}\label{eq:int0}
\sigmaf^2 = \int_0^\infty \left(\frac{g'(x)}{g(x)}x\right)^2 g(x) dx
          = \int_0^\infty \left(\frac{3x^3}{1+x^3}\right)^2 g(x) dx <\infty
\end{align}
and there exists a constant $D_1$ such that for every $x>0$
\begin{align}\label{eq:D_1}
\left|\left(\frac{g'(x)}{g(x)}\right)''x^3\right|, \left|\frac{xg(x)}{1-G(x)}\right|\leq D_1,
\end{align}
where $G$ is the cdf of $z$. The random variable $z$ fails to satisfy Assumption \ref{asu:expectation1}.2 since
${\rm Var}[z]=\infty$. Let $\mathbb{I}_A(x)$ be a function that equals $1$ if $x\in A$ and $0$ otherwise and fix $y>1$. We now construct a $y$-dependent random variable that satisfies Assumptions \ref{asu:expectation1}, \ref{asu:density1}, and \ref{asu:density22}, whose density is `similar' to the function
$g_y(x) := g(x)\mathbb{I}_{\{ 0 < x < y \}}(x) + e^{-x}\mathbb{I}_{\{ y < x \}}(x)$, and for which the difference $\sigma_{v} - \frac{1}{\sigma_{f}}$ is large. One may notice that for sufficiently large $y$, the function $g_y$ is not a density function since
$\int_0^\infty g_y(x)dx < 1$. Moreover, for large $y$'s the `expectation' is not one as $\int_0^\infty x g_y(x)dx < 1$. In order to construct a density `similar' to $g_y$ we add to $g_y$ a function $h_y$ that is a sum of two functions. Each of these two
functions has a significant contribution only to one of the two integrals mentioned above.
Let $u,C_2,C_3$ be positive constants and define the function $h(x): =u C_1  \mathbb{I}_{\{1/u < x < 2/u\}}(x) + u C_2  \mathbb{I}_{\{u < x < u+1/u^2\}}(x)$. For a sufficiently large $u$ one has
$\int_0^\infty h(x)dx =  C_1 + C_2/u  \approx C_1$ and $\int_0^\infty xh(x)dx = 3C_1/2u + C_2(1+1/2u^3) \approx C_2$. Therefore, for sufficiently large $y$ one can construct a $\mathcal{C}^3$ function $e_y$ that satisfies the following conditions:
\begin{itemize}
\item[(C1)] $e_y\approx g_y+h_y$, where $h_y$ admits the same form as $h$ with some proper $y$-dependent parameters $u,C_1$, and $C_2$, where $u>4$ for every $y$,
\item[(C2)] $\int_0^\infty e_y(x)dx = 1 $;
\item[(C3)] $\int_0^\infty x e_y(x)dx = 1 $;
\item[(C4)] there exists $w:=w_y >0$ such that for every $x>w$ one has $e_y(x)=g_y(x)=e^{-x}$;
\item[(C5)] for every $2/3<x<1$ one has $e_y(x)=g_y(x)$;\;\; and
\item[(C6)] there exists a positive parameter $D_2$ such that $\left|\frac{e_y'(x)}{e_y(x)}x\right|$, $\left|\left(\frac{e_y'(x)}{e_y(x)}\right)''x^3\right|$, and $\left|\frac{xe_y(x)}{1-E_y(x)}\right|$ are bounded from above by $D_2$ on the interval $(0,w)$, where $E_y$ is the cdf that is associated with the pdf $e_y$.
\end{itemize}
Conditions (C1)--(C3) can hold by the preceding discussion.
To see why one can choose $e_y$ that satisfies Conditions (C4) and (C5) notice that $h_y$ is nonzero only over $(1/u,2/u)\cup(u,u+1/u^2)$.
Therefore, $e_y$ can be chosen to be equal to $g_y$ on any subinterval of the complement of $(1/u,2/u)\cup(u,u+1/u^2)$. Condition (C4) can hold by taking $w=u+1/u^2$, and Condition (C5) can hold since $u>4$ for every $y$ by Condition (C1). Condition (C6) can hold by Eq.~(\ref{eq:D_1}) and by Condition (C1).

Let $v:=v_y$ be a random variable that is associated with the pdf $e_y$. We show that $v$ satisfies Assumptions \ref{asu:expectation1} and \ref{asu:density22}.
The variance of $v$ is finite since
\begin{align}\label{eq:vy}
\sigma_{v[y]}^2 &= \int_0^\infty x^2 e_y(x)dx
\approx \int_0^\infty x^2(g_y(x) + h_y(x))dx\\\notag
&= \int_0^y x^2g(x)dx +\int_y^\infty x^2e^{-x}dx + \int_0^\infty x^2 h_y(x)dx <\infty.
\end{align}
The variance of $\frac{e_y'(v)}{e_y(v)}v$ is also finite since by Conditions (C5) and (C6) one has
\begin{align}\notag
\sigma_{f[y]}^2 &= \int_0^\infty \left(\frac{e_y'(x)}{e_y(x)}x\right)^2 e_y(x) dx
=  \int_0^w \left(\frac{e_y'(x)}{e_y(x)}x\right)^2 e_y(x) dx +  \int_w^\infty \left(\frac{e_y'(x)}{e_y(x)}x\right)^2 e_y(x) dx\\\notag
&\leq  D_2 w +e^{-w} <\infty.
\end{align}
Assumptions \ref{asu:density22}.2 and \ref{asu:density22}.3 also follow by Conditions (C5) and (C6).

We now show that by taking large $y$'s, the difference $\sigma_{v[y]}-\frac{1}{\sigma_{f[v]}}$ becomes large.
To this end, we show that $\limy \left(\sigma_{v[y]}-\frac{1}{\sigma_{f[v]}}\right)=\infty$. Let $y$ be such that Conditions (C1)--(C6) hold. As in Eq.~(\ref{eq:vy}) one has
\begin{align}\label{eq:vy2}
\sigma_{v[y]}^2\approx \int_0^\infty x^2(g_y(x) + h_y(x))dx \geq \int_0^y x^2 g_y(x)dx = \int_0^y x^2g(x)dx = C\ln(1+y^3)/3.
\end{align}
By Condition (C5) one has
\begin{align}\notag
\sigma_{f[y]}^2 &= \int_0^\infty \left(\frac{e_y'(x)}{e_y(x)}x\right)^2 e_y(x) dx
\geq  \int_{2/3}^1 \left(\frac{e_y'(x)}{e_y(x)}x\right)^2 e_y(x) dx =\int_{2/3}^1 \left(\frac{g'(x)}{g(x)}x\right)^2 g(x) dx   \\\notag
& =   \int_{2/3}^1 \left(\frac{3x^4}{1+x^3}x\right)^2 g(x) dx := D_3<\infty.
\end{align}
Notice that $D_3$ is independent of $y$, and therefore
\begin{align}\label{eq:f}
\frac{1}{\sigma_{f[y]}^2} \leq \frac{1}{D_3}.
\end{align}
From Eqs.~(\ref{eq:vy2}) and (\ref{eq:f}) one concludes that $\limy \left(\sigma_{v[y]}-\frac{1}{\sigma_{f[v]}}\right)=\infty$ and the result follows.

\subsection{Proof of Theorem \ref{thm:convergence_Vn}}
%%%%%%%%%%%%%%%%%%%%%%%%%%%%%%%%%%%%%%%%%%%%%%%%%%%%%%%%%%%%%%%%%%%%%%%%%%%%%%%
%%%%%%%%%%%%%%%%%%%%%%%%%%%%%%%%%%%%%%%%%%%%%%%%%%%%%%%%%%%%%%%%%%%%%%%%%%%%%%%
%%%%%%%%%%%%%%%%%%%%%%%%%%%%%%     START     %%%%%%%%%%%%%%%%%%%%%%%%%%%%%%%%%%
%%%%%%%%%%%%%%%%%%%%%%%%%%%%%%%%%%%%%%%%%%%%%%%%%%%%%%%%%%%%%%%%%%%%%%%%%%%%%%%
%%%%%%%%%%%%%%%%%%%%%%%%%%%%%%%%%%%%%%%%%%%%%%%%%%%%%%%%%%%%%%%%%%%%%%%%%%%%%%%
%%%%%%%%%%%%%%%%%%%%%   TO MOVE TO THE PROOFS SECTION  %%%%%%%%%%%%%%%%%%%%%%%%
%%%%%%%%%%%%%%%%%%%%%%%%%%%%%%%%%%%%%%%%%%%%%%%%%%%%%%%%%%%%%%%%%%%%%%%%%%%%%%%
%%%%%%%%%%%%%%%%%%%%%%%%%%%%%%%%%%%%%%%%%%%%%%%%%%%%%%%%%%%%%%%%%%%%%%%%%%%%%%%
%%%%%%%%%%%%%%%%%%%%%%%%%%%%%%%%%%%%%%%%%%%%%%%%%%%%%%%%%%%%%%%%%%%%%%%%%%%%%%%
%%%%%%%%%%%%%%%%%%%%%%%%%%%%%%%%%%%%%%%%%%%%%%%%%%%%%%%%%%%%%%%%%%%%%%%%%%%%%%%
Let\footnote{See the paragraph preceding Eq.~(\ref{U_u.o.c.1}) for the definition of
$\Omega_W$.} $\Omega':=\Omega_\theta\times\Omega_W$. This
probability space is the basis for the proof of Theorem
\ref{thm:convergence_Vn}.
From Eqs.~(\ref{eq:prop1}) and (\ref{Unt1}) it follows
that\footnote{The process $\vphihat$ was defined in Section
\ref{sec:optimal_stopping_time} as $\limn\vphitn =\vphiasf$.}
\begin{align}\label{eq:varphi_uoc}
\limn\vphitn = \vphihat,\;\;\Omega'\text{-u.o.c.}
\end{align}

%As we now argue, there is a common probability space in which
%for every convergent sequence of posteriors $\{\pi^n\}_n$
%that satisfies $\limn\pi^n = \pi$,
%there are versions of the posterior processes $(\ptn(t))$ and
%$(\pt(t))$ such that $(\ptn(t))$ converges u.o.c. to $(\pt(t))$.
%This probability space is the basis for the proofs of Lemma
%\ref{lem:convergence_stopping_times} and Theorem
%\ref{thm:convergence_Vn} below.

%\begin{rem}
%From Theorem \ref{thm:convergence_ptt} it follows that
%\begin{align}\label{eq:Z1}
%\vphit(t)\eqd \limn \vphit^n(t) =
% \exp \left\{ \frac{l\sigmaf}{\sqrt{\alpha}} W'(t)-
%\frac{1}{2}\left(\frac{l\sigmaf}{\sqrt{\alpha}}\right)^2
%t+\frac{\theta\sigmaf}{\sqrt{\alpha}}\cdot
%\frac{l\sigmaf}{\sqrt{\alpha}} t\right\},\;\;l\in
%S,\;t\in[0,\infty).
%\end{align}
%where $W'(t)$
%-\frac{1}{2}\left(\frac{\muohat\sigmaf}{\sqrt{\alpha}}\right)^2  t$
%is a Brownian motion.
%under $H_1$ and $\tilde{Z}(t)$ is a Brownian
%motion under $H_0$. From Eq.~(\ref{eq:Z1}) and the Skorokhod
%Representation Theorem it follows that there exists a common
%probability space $\Omega '$ in which versions of $\vphit$
%and $\{\vphit^n\}_{n\inN}$
%%, denoted by $\tilde{X}'$ and $\{{\tilde{X'}^n}  \}_{n\inN}$,
%are defined such that
%\begin{align}\label{eq:varphi_uoc}
%\limn \vphit^n(t) =
%\vphit(t)\;\;\Omega'\text{-u.o.c.}
%\end{align}
%From Eqs.~(\ref{eq:ptA_Xnt})--(\ref{eq:ptt_Xt}) we also have
%\begin{align}\label{eq:p_uoc}
%\limn {\pt^n}(t) = \pt(t)\;\;\Omega'\text{-u.o.c.}
%\end{align}

\subsubsection{\textbf{Proof of Eq.~(\ref{eq:convergence_tautnD_to tautD})}}
By using this convergence we show now that on this probability space
$\limn\tau_D^n(\pi)=\tau_D(\pi)$, $\Omega'$-a.s.
%From Eqs~(\ref{eq:ptA_Xnt}) and
%(\ref{eq:ptt_Xt}) it follows that the versions of $\ptt^L$ and
%$\{\ptt^{L^n}\}_{n\inN}$, denoted by $\{\ptt^L zz'\}_{n\inN}$
%and $\{\ptt^{L^n}zz'\}_{n\inN}$, satisfy
%\begin{align}\label{eq:pnt_uoc}
%\limn {\ptt^{L^n}}zz' = \ptt^L zz'\;\;\text{u.o.c.}
%\end{align}
%\end{rem}

\begin{lem}\label{lem:convergence_stopping_times}
Fix $T>0$. Under Assumptions \ref{asu:t_u_t_v},
\ref{asu:expectation1}, \ref{asu:lambda_rates}, \ref{asu:density1},
\ref{asu:density22}, \ref{asu:value_functions}, and \ref{asu:D},
$$\limn (\tautnD(\pi)\wedge T) = (\tautD(\pi)\wedge T),
\;\;\Omega'\text{-{\rm a.s.}}$$
\end{lem}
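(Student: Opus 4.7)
The plan is to lift the u.o.c.\ convergence $\limn\vphitn = \vphihat$ on $\Omega'$ established in Eq.~(\ref{eq:varphi_uoc}) to the posterior processes through the continuity of the normalization map $\Lambda$ of Eq.~(\ref{Lambda}), yielding $\limn\ptn = \phat$ u.o.c.\ on $\Omega'$. Once this is in hand, the lemma reduces to a deterministic continuous-mapping statement about first exit times from the open set $D$: uniform convergence of paths transfers exit times provided the limit path exits $D$ \emph{regularly}. The bulk of the work is therefore to establish this regularity almost surely for $\phat_\pi$.

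Fix $\omega$ in the set of full $\Omega'$-measure on which $\ptn_\pi\to\phat_\pi$ uniformly on compacts. For the lower bound $\liminf_n (\tautnD(\pi)\wedge T)\geq\tautD(\pi)\wedge T$, pick $\epsilon>0$ with $\tautD(\pi)\wedge T -\epsilon>0$; the continuous path $\phat_\pi$ lies in the open set $D$ throughout the compact interval $[0,\tautD(\pi)\wedge T-\epsilon]$, hence at strictly positive distance from $D^c$. Uniform convergence then forces $\ptn_\pi([0,\tautD(\pi)\wedge T-\epsilon])\subset D$ for large $n$, giving $\tautnD(\pi)\geq\tautD(\pi)\wedge T-\epsilon$, and letting $\epsilon\downarrow 0$ closes the bound. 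For the upper bound, I may assume $\tautD(\pi)<T$ (else the bound is trivial). The regularity claim states that a.s.\ on $\Omega'$, for every $\epsilon>0$ there exists $t\in(\tautD(\pi),\tautD(\pi)+\epsilon)$ with $\phat_\pi(t)\in\mathrm{int}(D^c)$. Granted this, uniform convergence gives $\ptn_\pi(t)\notin D$ for large $n$, hence $\tautnD(\pi)\leq t<\tautD(\pi)+\epsilon$, and $\epsilon\downarrow 0$ finishes.

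The main obstacle is the regularity claim. Combining the Bayes formula (Eq.~(\ref{frac_q_l_q_m})) with Eq.~(\ref{varphi_Z}) applied to the Brownian motion with drift $(\hat M_\theta(t))$ of Eq.~(\ref{eq:hat_L2}) (which has strictly positive diffusion coefficient $\sqrt{\alpha}/\sigmaf$), one sees that $\phat(t)$ is a strictly monotone function of $\hat M(t)$ for each fixed $t$ and satisfies a filtering-type SDE whose diffusion coefficient vanishes only at the degenerate endpoints $0$ and $1$. At any interior level $p\in(0,1)$, $\phat$ is therefore a non-degenerate continuous local martingale; a Dambis--Dubins--Schwarz time change reduces it locally to a standard Brownian motion, for which the classical oscillation property at hitting times yields strict crossings in every right-neighborhood. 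By the semi-open convention preceding Assumption \ref{asu:D}, the endpoints $0$ and $1$ are included in $D$ whenever they lie on its topological boundary, so no exit of $\phat_\pi$ ever occurs at a degenerate point, and the regularity claim follows for every path in a set of full $\Omega'$-measure.
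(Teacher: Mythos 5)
Your proof is correct, and it reaches the same conclusion by a genuinely different route in the one place that matters: establishing that $\limsup_n \tautnD \le \tautD$ a.s. Both your argument and the paper's hinge on a \emph{regularity} property of the limit path at the exit time (the continuous path must strictly cross the boundary of $D$ immediately after $\tautD$, not merely touch it), and both ultimately trace back to the oscillation of Brownian motion at a stopping time. Where you differ is in \emph{which} process you work with and \emph{how} you extract the oscillation. The paper stays with the likelihood-ratio process $\vphihat$ and exploits the closed form of Eq.~(\ref{vphit}): since $\ln\vphihat(l,\cdot)$ is an affine function of $W'$ plus a Lipschitz drift, the strict-crossing property of $W'$ at the exit time $\tautD$ transfers directly to $\vphihat$, giving the paper's quantity $\delta_3>0$; Assumption \ref{asu:D} then supplies $\delta_4>0$ to ensure $\vphit^n$ lands in the interior of $D^c$ rather than in the next continuation interval. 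You instead lift the u.o.c.\ convergence to $\phat$ through $\Lambda$, invoke the filtering SDE $d\phat=\tfrac{l}{\sigma}\phat(1-\phat)\,d\bar W$ (not derived in the paper, but standard) to see that $\phat$ is a non-degenerate continuous local martingale away from $\{0,1\}$, and then apply Dambis--Dubins--Schwarz plus the BM oscillation property to get strict crossings. Your version is more modular and would survive a situation in which $\vphihat$ had no convenient closed form, at the cost of importing the filtering SDE and DDS; the paper's version is more elementary and self-contained given Eq.~(\ref{vphit}). Your $\liminf$ direction (compact path inside the relatively open set $D$, positive distance to $D^c$) is essentially the paper's Case $\tautD>T$ stated once for both cases; note only that $D$ need be open relative to $[0,1]$ because of the semi-open convention at $0$ and $1$, but since $\phat_\pi(t)\in(0,1)$ strictly for every $t$, this causes no trouble.
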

\begin{proof}
If $\pi\notin D$, then $\tautnD=\tautD=0$. For the case $\pi\in D$
we express the stopping times $\tautnD$ and $\tautD$ in a more
convenient way.
%If $\pi\in D$ then
From Eqs.~(\ref{frac_q_l_q_m}) and (\ref{frac_p_l_p_m}) it follows
that
\begin{align}\label{eq:ptA_Xnt}
\frac{\pt^n(t)}{1-\pt^n(t)} =\frac{\pi}{1-\pi}\vphit^n(t)
\end{align}
and
\begin{align}\label{eq:ptt_Xt}
\frac{\phat(t)}{1-\phat(t)} =\frac{\pi}{1-\pi}\vphihat(t).
\end{align}
%
%To avoid notation cumbersome we denote $a=a_i,
%b=b_i$.
Eqs.~(\ref{eq:ptA_Xnt}) and (\ref{eq:ptt_Xt}) imply that
\begin{align}\label{eq:tautD_with_Xt}
\tautD (\pi)
&=\inf\left\{t \left|  \vphihat(t)\notin
\cup_j\left(c_j,d_j \right)\right.\right\} = \inf\left\{t \left|
\vphihat(t)\notin
\left(c_i,d_i \right)\right.\right\}
\end{align}
and
\begin{align}
\label{eq:tautnD_with_Xtn}
\tautnD (\pi)&=\inf\left\{t \left| \vphit^n(t)\notin
\cup_j\left(c_j,d_j \right)\right.\right\},
\end{align}
where for every index $j$,
$c_j:=\frac{1-\pi}{\pi}\cdot\frac{a_j}{1-a_j}$ and
$d_j:=\frac{1-\pi}{\pi}\cdot\frac{b_j}{1-b_j}$. The second equality
in Eq.~(\ref{eq:tautD_with_Xt}) follows since $(\vphihat(t))$ is a
continuous process w.r.t.~the parameter $t$ (see Eq.~(\ref{vphit})).
In order to prove that $\limn (\tautnD(\pi) \wedge T)=
(\tautD(\pi)\wedge T)$, $\Omega'$-a.s., we distinguish between two
possibilities: $\tautD(\pi) (\omega) > T $ and $\tautD(\pi) (\omega)
\leq T $. Fix\footnote{The following properties that we state hold
for almost every $\omega\in\Omega'$. We chose $\omega\in\Omega'$ for
which these properties hold.} $\omega\in\Omega'$.

If $\tautD(\pi) (\omega) > T $ then, since $\vphihat(t)(\omega)$ is
continuous w.r.t.~$t$, it follows that the supremum
$$M(T)(\omega):=\underset{0\leq t\leq
T}{\sup}\,\vphihat(t)(\omega)$$ and the infimum
$$m(T)(\omega):=\underset{0\leq t\leq
T}{\inf}\vphihat(t)(\omega),$$ are attained and satisfy $c_i <
m(T)(\omega) < M(T)(\omega) < d_i$. Moreover,
Eq.~(\ref{eq:varphi_uoc}) implies that for every $0 < \delta < \min
\{d_i - M(T)(\omega),m(T)(\omega)-c_i\}$ there exists $N_\delta>0$
such that for every $n > N_\delta$ and every $t\in[0,T]$ one has
$$\left|\vphit^n(t)(\omega)-
\vphihat(t)(\omega)\right|<\delta,$$ and therefore,
$$c_i < \vphit^n(t)(\omega) < d_i.$$
Hence, $\tautnD (\omega) > T $, and consequently $(\tautnD
(\omega)\wedge T) = (\tautD (\omega) \wedge T)$.

If $\tautD(\pi) (\omega) \leq T $ we assume without loss of
generality\footnote{The proof for $\vphihat(\tautD(\pi))(\omega) =
c_i$ is similar and is therefore omitted.} that
$\vphihat(\tautD(\pi))(\omega) = d_i$.
Fix $\epsilon>0$. Denote $$\delta_1=\delta_1(\omega):= d_i
-\underset{0\leq t\leq \tautD
(\omega)-\epsilon}{\sup}\,\vphihat(t)(\omega)$$ and
$$\delta_2=\delta_2(\omega):= \underset{0\leq t\leq
\tautD (\omega)-\epsilon}{\inf}\vphihat(t)(\omega) - c_i.$$ By the
continuity of $\vphihat(t)(\omega)$ with respect to $t$, and by the definition of $\tautD$ it
follows that $\delta_1 , \delta_2 > 0$. Denote
$$\delta_3= \delta_3(\omega):=  \underset{\tautD (\omega)\leq t \leq
\tautD (\omega)+\epsilon}
{\sup}\,\vphihat(t)(\omega)-d_i.$$ From Eq.~(\ref{vphit}) and the
fluctuations of the Brownian motion, it follows that $\delta_3 > 0$.
Let
$$\delta_4 :=\frac{1}{2}\left(c_{i+1} - d_i\right) .$$ Assumption \ref{asu:D} implies
that $\delta_4>0$. Denote
$$
\tau_d (\pi)
:=\inf\left\{ t \left| \vphihat(t)=
d + (\delta_4\wedge\delta_3)\right.\right\}.
$$
Then clearly one has $\tautD(\omega)<\tau_d(\omega) <
\tautD(\omega)+\epsilon$. Let $\delta:=\min\{\delta_1, \delta_2,
\delta_3, \delta_4, \epsilon\}$. From Eq.~(\ref{eq:varphi_uoc}) it
follows there exists $N_\delta>0$ such that for every $ n >
N_\delta$ and every $t\in[0,T]$,
$$\left|\vphit^n(t)(\omega) -
\vphihat(t)(\omega)\right|<\delta.$$ Therefore, for every such
$n>N_\delta$ and every $t\in[0,\tautD (\omega)-\epsilon]$ one has
\begin{align}\label{eq:ci}
c_i < \vphit^n(t)(\omega) < d_i,
\end{align}
and at time $\tau_d$ one has
\begin{align}\label{eq:di}
d_i < \vphit^n(\tau_d)(\omega) <d_{i+1}.
\end{align}
Since $\tautD(\omega)<\tau_d(\omega) < \tautD(\omega)+\epsilon$,
Eqs.~(\ref{eq:ci})--(\ref{eq:di}) yield
$$|(\tautnD (\omega)\wedge
T)-(\tautD (\omega)\wedge T)|<\epsilon.$$
% If $\pi\in\partial D$ the proof is similar and therefore omitted.
\end{proof}

%%%%%%%%%%%%%%%%%%%%%%%%%%%%%%%%%%%%%%%%%%%%%%%%%%%%%%%%%%%%%%%%%%%%%%%%%%%%%%%
%%%%%%%%%%%%%%%%%%%%%%%%%%%%%%%%%%%%%%%%%%%%%%%%%%%%%%%%%%%%%%%%%%%%%%%%%%%%%%%
%%%%%%%%%%%%%%%%%%%%%%%%%%%%%%     END       %%%%%%%%%%%%%%%%%%%%%%%%%%%%%%%%%%
%%%%%%%%%%%%%%%%%%%%%%%%%%%%%%%%%%%%%%%%%%%%%%%%%%%%%%%%%%%%%%%%%%%%%%%%%%%%%%%
%%%%%%%%%%%%%%%%%%%%%%%%%%%%%%%%%%%%%%%%%%%%%%%%%%%%%%%%%%%%%%%%%%%%%%%%%%%%%%%
%%%%%%%%%%%%%%%%%%%%%   TO MOVE TO THE PROOFS SECTION  %%%%%%%%%%%%%%%%%%%%%%%%
%%%%%%%%%%%%%%%%%%%%%%%%%%%%%%%%%%%%%%%%%%%%%%%%%%%%%%%%%%%%%%%%%%%%%%%%%%%%%%%
%%%%%%%%%%%%%%%%%%%%%%%%%%%%%%%%%%%%%%%%%%%%%%%%%%%%%%%%%%%%%%%%%%%%%%%%%%%%%%%
%%%%%%%%%%%%%%%%%%%%%%%%%%%%%%%%%%%%%%%%%%%%%%%%%%%%%%%%%%%%%%%%%%%%%%%%%%%%%%%
%%%%%%%%%%%%%%%%%%%%%%%%%%%%%%%%%%%%%%%%%%%%%%%%%%%%%%%%%%%%%%%%%%%%%%%%%%%%%%%

As a corollary, we get that $\limn \tautnD(\pi) \eqd \tautD(\pi)$. This completes the proof of Eq.~(\ref{eq:convergence_tautnD_to tautD}).

\subsubsection{\textbf{Proof of Eq.~(\ref{eq:convergence_Vtautn_to Vtaut_in ecpectation})}}
%The proof is carried out in the probability space $\Omega'$.
To avoid cumbersome notation we write $\taut:=\tautD$ and
$\tautn:=\tautnD$. In order to prove that $V^n_{\tautn}(\pi) $
converges to $V_{\taut}(\pi)$ we will bound the expression
$|V^n_{\tautn}(\pi) - V_{\taut}(\pi)|$ by other terms for which the
convergence is easier to prove.
By the triangle inequality, for every index $n\inN$ and every time
$T>0$,
\begin{align}\notag
&\left|V^n_{\tautn}(\pi) - V_{\taut}(\pi)\right|\\\label{eeeqq1}
&\quad\leq  \left|E^\pi\left[
\int_0^{\tautn}re^{-rt}k^n(\pt^n(t))dt
- \int_0^{\tautn\wedge T}re^{-rt}k^n(\pt^n(t))dt
\right]\right|\\\label{eeeqq2}
&\qquad+  \left|E^\pi\left[
\int_0^{\tautn\wedge T}re^{-rt}k^n(\pt^n(t))dt
- \int_0^{\taut\wedge T}re^{-rt}k^n(\pt^n(t))dt
\right]\right|\\\label{eeeqq3}
&\qquad+  \left|E^\pi\left[
\int_0^{\taut\wedge T}re^{-rt}k^n(\pt^n (t))dt
- \int_0^{\taut\wedge T}re^{-rt}k(\phat (t))dt
\right]\right|\\\label{eeeqq4}
&\qquad+ \left|E^\pi\left[
\int_0^{\taut\wedge T}re^{-rt}k(\phat (t))dt
- \int_0^{\taut}re^{-rt}k(\phat (t))dt
\right]\right|\\\label{eeeqq7}
&\qquad+ \left|E^\pi\left[
re^{-r\tautn}\tfrac{1}{n}K^n(\pt^n (\tautn))
- re^{-r\taut}K(\phat (\taut))
\right]\right|
\end{align}
We now show that each of the terms converges to zero. That is, for
every fixed $\epsilon>0$, there exists $N_\epsilon>0$ such that for
every $n>N_\epsilon$, each of the terms is bounded by $\epsilon$. We
divide the proof into four parts.
$\\ $\textbf{Part I: First and fourth terms.}
In this part we show that for every $n\inN$ and for sufficiently
large $T$, if the DM cannot operate the system after time $T$, then
his expected loss by using the stopping time $(\tautn\wedge T)$ is
close up to $\epsilon$ to the expected loss from the integral cost
part without the limitation of the maximal time of operating the
system.
From Remark \ref{rem:k_K_bounded} it follows that the sequence
$\{k^n(\ptn (t))\}_{n\inN}$ is bounded by $C_k$. Therefore, for
every $T>0$,
\begin{align}\label{eq:difference_risk_n_and_risk_Tn}
&\left|E^\pi\left[
\int_0^{\tautn}re^{-rt}k^n(\pt^n (t))dt
- \int_0^{\tautn\wedge T}re^{-rt}k^n(\pt^n (t))dt
\right]\right|\\\notag
&\quad\leq E^\pi\left|\int_{\tautn\wedge T}^ {\tautn}{re^{-rt}k^n(\pt^n
(t))dt } \right|\\\notag
&\quad\leq C_k E^\pi\left|\int_{\tautn\wedge T}^ {\tautn}{re^{-rt} dt }
\right|\\\notag
&\quad\leq C_k E^\pi\left|\int_{T}^ {\infty}{re^{-rt} dt } \right| =
C_ke^{-rT}.
\end{align}
The last term on Eq.~(\ref{eq:difference_risk_n_and_risk_Tn})
converges to zero as T goes to infinity, and so there exists a
constant $T:=T_\epsilon$ such that for every $n\inN$
\begin{align}\notag
\left|E^\pi\left[
\int_0^{\tautn}re^{-rt}k^n(\pt^n (t))dt
- \int_0^{\tautn\wedge T}re^{-rt}k^n(\pt^n (t))dt
\right]\right|<\epsilon.
\end{align}
Similarly, one can choose $T_\epsilon$ to be such that, in addition,
\begin{align}\notag
\left|E^\pi\left[
\int_0^{\taut}re^{-rt}k(\phat (t))dt
- \int_0^{\taut\wedge T}re^{-rt}k(\phat (t))dt
\right]\right|<\epsilon.
\end{align}
$\\ $\textbf{Part II: Second term.}
We now show that for sufficiently large $n\inN$, by changing the
$\mathcal{F}^{L^n}_{nt}$-adapted stopping time $\tautn$ to the
$\mathcal{F}^{\pt}_t$-adapted stopping time $\taut$, the expected
integral cost does not change by much:

\begin{align}\label{eq:risk_VTn_wrt_limTLn}
&\left|E^\pi\left[
\int_0^{\tautn\wedge T}re^{-rt}k^n(\pt^n (t))dt
- \int_0^{\taut\wedge T}re^{-rt}k^n(\pt^n (t))dt
\right]\right|\\\notag
&\quad\leq E^\pi\left|\int_{\taut\wedge T}^{\tautn\wedge
T}re^{-rt}k^n(\pt^n (t))dt \right| \\\notag
&\quad\leq rC_k  E^\pi\left|({\tautn}\wedge T) - (\taut\wedge T )
\right|.\\\notag
\end{align}
From Lemma \ref{lem:convergence_stopping_times} one has
$\limn(\tautn\wedge T) = (\taut\wedge T)$, $\Omega'$-a.s. Therefore,
by the bounded convergence theorem, there exists $N_\epsilon>0$ such
that for every $n>N_\epsilon$ the last term in
Eq.~(\ref{eq:risk_VTn_wrt_limTLn}) is smaller than $\epsilon$.
$\\ $\textbf{Part III: Third term.}
In this part we show that if the DM cannot operate the system after
time $T$, then his expected integral cost from the $n$-th system
and by using the $\mathcal{F}^{\pt}_t$-adapted stopping time $\taut$ is
close to the expected integral cost of the limit problem using the
same stopping time $\taut$:
\begin{align}\label{eq:third_term}
&\left|E^\pi\left[
\int_0^{\taut\wedge T}re^{-rt}k^n(\pt^n (t))dt
- \int_0^{\taut\wedge T}re^{-rt}k(\pt (t))dt
\right]\right|\\\notag
&\quad\leq E^\pi\left[
\int_0^{\taut\wedge T}re^{-rt}\left|k^n(\pt^n (t))-k(\phat
(t))\right|dt
\right]\\\notag
&\quad\leq E^\pi\left[
T \underset{0\leq t\leq T}{\sup}\,\left|k^n(\pt^n (t))-k(\phat
(t))\right|
\right].
\end{align}
From Eqs.~(\ref{eq:varphi_uoc}), (\ref{eq:ptA_Xnt}), and
(\ref{eq:ptt_Xt})
 it follows that $\limn\pt^n (t)=\phat (t)$,
$\Omega'$-u.o.c. Moreover, by Assumption
\ref{asu:value_functions}.1, the functions $k^n$ converge uniformly
on $[0,1]$ to $k$, and therefore, $\limn \underset{0\leq t\leq
T}{\sup}\,\left|k^n(\pt^n (t))-k(\phat (t))\right|=0$, $\Omega'$-a.s.
The bounded convergence theorem implies that for sufficiently large
$n\inN$, the last term in Eq.~(\ref{eq:third_term}) is smaller than
$\epsilon$.

$\\ $\textbf{Part IV: Fifth term.} In this part we show that for
sufficiently large $n\inN$, the expected terminal cost from the
$n$-th system using the stopping time $\tautn$ is relatively close
to the expected terminal cost from the limit system using the
stopping time $\taut$. To this end, we show that
$\limn re^{-r\tautn} \tfrac{1}{n}K^n(\pt^n (\tautn)) = re^{-r\taut }K(\phat
(\taut))$, $\Omega'$-a.s. From Remark \ref{rem:k_K_bounded} and the
bounded convergence theorem it will follow that there exists
$N_\epsilon >0 $ such that, for every $n>N_\epsilon$,
$$\left|E^\pi\left[
re^{-r\tautn}\tfrac{1}{n}K^n(\pt^n (\tautn))
- re^{-r\taut}K(\phat (\taut))
\right]\right|<\epsilon.$$
From Lemma \ref{lem:convergence_stopping_times} it follows that
\begin{align}
P(\omega\in\Omega' \mid \forall T\in\mathbb{N}\;\;\limn
(\tautn(\pi)(\omega)\wedge T) = (\taut(\pi)(\omega)\wedge T))=1.
\end{align}
Fix $\omega\in\Omega'$ such that for every $T\in\mathbb{N}$ one has
$\limn (\tautn (\omega)\wedge T) = (\taut(\omega)\wedge T)$ and
$\limn \pt^n (\omega) = \phat  (\omega)$ u.o.c. We divide the proof
into two cases: $\taut(\omega)=\infty$ and $\taut(\omega)<\infty$.
If $\taut(\omega)=\infty$ then, since by Lemma
\ref{lem:convergence_stopping_times} one has $\limn (\tautn
(\omega)\wedge T) = (\taut(\omega)\wedge T) = T$, it follows that
there exists $N_\epsilon>0$ such that, for every $n>N_\epsilon$,
$|(\tautn (\omega)\wedge T) - T|<1$. Let $T$ be such
that $re^{-r(T-1)}<\frac{\epsilon}{C_K}$. Then for every
$n>N_\epsilon$,
\begin{align}\notag
&|re^{-r\tautn} \tfrac{1}{n}K^n(\pt^n (\tautn))(\omega) - re^{-r\taut} K(\phat
(\taut))(\omega) |\\\notag
&\quad= |re^{-r\tautn}  \tfrac{1}{n}K^n(\pt^n (\tautn))(\omega)  |\\\notag
&\quad\leq C_K
re^{-r(T-1)} \leq\epsilon.
\end{align}
If $\taut(\omega)<\infty$ then, since by Lemma
\ref{lem:convergence_stopping_times}  $\limn\tautn (\omega) =
\taut(\omega)$,  $\Omega'$-a.s., it follows that for sufficiently
large $n\inN$ the following two conditions hold:
\begin{align}\label{eq:conditions_tautn1}
\tautn (\omega)&<\taut(\omega)+1, \\\label{eq:conditions_tautn2}
|e^{-r\tautn(\omega)} -
e^{-r\taut(\omega)}|&<\frac{\epsilon}{2rC_K}.
\end{align}
By Assumptions \ref{asu:value_functions}.3 and
\ref{asu:value_functions}.4, the functions $K^n/n$ converge uniformly
on $[0,1]$ to the continuous function $K$. Since $\limn\pt^n (t)
(\omega) = \phat (t)(\omega)$ uniformly on $[0, \taut(\omega) +1]$, it
follows from Eq.~(\ref{eq:conditions_tautn1}) that for sufficiently
large $n\inN$
\begin{align}\label{eq:conditions_tautn3}
| \tfrac{1}{n}K^n(\pt^n (\tautn))(\omega) - K(\phat
(\taut))(\omega)|<\frac{\epsilon}{2rC_K}.
\end{align}
By combining
Eqs.~(\ref{eq:conditions_tautn2})--(\ref{eq:conditions_tautn3}) one
concludes that there exists $N_\epsilon>0$ such that for every
$n>N_\epsilon$,
\begin{align}\notag
&|re^{-r\tautn}  \tfrac{1}{n}K^n(\pt^n (\tautn)) - re^{-r\taut }K(\phat
(\taut))|(\omega) \\\notag
&\quad\leq re^{-r\tautn(\omega)}| \tfrac{1}{n}K^n(\pt^n (\tautn))(\omega) - K(\phat
(\taut))(\omega)|
+
|K(\phat (\taut))(\omega)|
|re^{-r\tautn(\omega)}-re^{-r\taut(\omega)} | \leq \epsilon.
\end{align}

This completes the proof of  Eq.~(\ref{eq:convergence_Vtautn_to Vtaut_in ecpectation}).

% References here (outcomment the appropriate case)

% CASE 1: BiBTeX used to constantly update the references
%   (while the paper is being written).
%\bibliographystyle{ormsv080} % outcomment this and next line in Case 1
%\bibliography{<your bib file(s)>} % if more than one, comma separated

% CASE 2: BiBTeX used to generate mypaper.bbl (to be further fine tuned)
%\input{mypaper.bbl} % outcomment this line in Case 2
%\bibliographystyle{plain} % outcomment this and next line in Case 1

\bibliographystyle{plain} % outcomment this and next line in Case 1
\bibliography{bib_Asaf} % if more than one, comma separated

\end{document}